\theoremstyle{definition}
\newtheorem{Def}{Definition}[section]
\newtheorem{Exa}[Def]{Example}
\newtheorem{Rem}[Def]{Remark}
\theoremstyle{plain}
\newtheorem{Thm}[Def]{Theorem}
\newtheorem{Lem}[Def]{Lemma}
\newtheorem{Pro}[Def]{Proposition}
\newtheorem{Cor}[Def]{Corollary}
\newtheorem{Que}[Def]{Question}
\def\IN{\mathbb N}\def\IR{\mathbb R}\def\IC{\mathbb C}\def\IA{\mathbb A}\def\IZ{\mathbb Z}\def\II{\mathbb I}
\def\B{\mathcal B}\def\E{\mathcal E}\def\G{\mathcal G}\def\K{\mathcal K}\def\P{\mathcal P}\def\L{\mathcal L}\def\H{\mathcal H}\def\I{\mathcal I}
\def\supp{\textup{supp}}
\def\ind{\textup{Ind}}
\def\prop{\textup{Prop}}
\def\diam{\textup{diam}}
\def\Cl{\textup{Cliff}_{\IC}}
\def\ox{\otimes}
\def\wh{\widehat}
\def\ox{\otimes}
\author[L.~Guo]{\vspace{-3ex}Liang Guo$^*$}
\address[L.~Guo]{\vspace{-2ex} Shanghai Institute for Mathematics and Interdisciplinary Sciences (SIMIS), Shanghai,
200433, China.\\
Research Institute of Intelligent Complex Systems, Fudan University, Shanghai, 200433, China\\ Email: liangguo@simis.cn}
\thanks{$^*$ Corresponding author: liangguo@simis.cn}
\author[K.~Li]{\vspace{-5ex}Kang Li}
\address[K.~Li]{\vspace{-2ex} Centre for Mathematical Sciences, Lund University, Box 118, SE-221 00 Lund, Sweden\\Email: kang.li@math.lth.se}
\author[Q.~Wang]{\vspace{-5ex}Qin Wang}
\address[Q.~Wang]{\vspace{-2ex} Research Center for Operator Algebras, School of Mathematical Sciences, East China Normal University, Shanghai, 200241, P.~R.~China.\\Email: qwang@math.ecnu.edu.cn}
\subjclass[2020]{19K56, 47L20}
\keywords{Coarse geometry, Roe algebra, Ghost ideal, $K$-theory, $\ell^p$-spaces}
\thanks{}
\title{$K$-theory of ghostly ideals for $\ell^p$-coarsely embeddable spaces}
\date{}
\begin{document}

\begin{abstract}
Ghostly ideals are among the most mysterious objects in coarse index theory. In this paper we show that if a metric space $X$ with bounded geometry admits a coarse embedding into an $\ell^p$-space ($1 \le p < \infty$), then the canonical inclusion from any geometric ideal to the corresponding ghostly ideal induces an isomorphism in $K$-theory. 
As consequences, we deduce that such spaces satisfy the relative coarse Baum-Connes conjectures introduced in \cite{GWZ2025}, as well as the operator norm localization property for finite rank projections ($ONL_{\mathcal P_{Fin}}$) as introduced in \cite{BFV2024}.
\end{abstract}

\maketitle

\tableofcontents

\section{Introduction}

The notion of ghosts was introduced by Guoliang Yu in his study of the coarse Baum-Connes conjecture and it plays a very central role in both coarse geometry and operator algebra. Ghosts are among the most mysterious operators in Roe algebras: a ghost is an infinite matrix such that its matrix entries vanish at infinity, yet it may fail to be a compact operator. More precisely, let $C^*(X)$ denote the Roe algebra of a metric space $X$ with \emph{bounded geometry}, i.e., there is a uniform bound on the cardinalities of all closed balls of a fixed radius (see \Cref{def: Roe algebra}). An operator $T \in C^*(X)$ can be written as an $X$-by-$X$ matrix such that each matrix entry is a compact operator, and $T$ is called a \emph{ghost} if $\|T_{xy}\| \to 0$ as $x, y \to \infty$. However, despite this local vanishing behavior of its matrix entries, the operator $T$ itself is not necessarily compact. In other words, the operator norm $\|\chi_{K_R}T\chi_{K_R}\|$ may not tend to zero as $R \to \infty$, where $K_R = B(x_0,R)$ for any fixed base point $x_0\in X$ and $R>0$. Consequently, a ghost is an object that is elusive locally at infinity but remains capturable globally. 

As mentioned previously, ghosts play a crucial role in the study of the coarse Baum-Connes conjecture by providing counterexamples. For a metric space $X$ with bounded geometry, there is a so-called coarse assembly map
$$\mu: \lim_{d\to\infty}K_*(P_d(X))\to K_*(C^*(X)).$$
The coarse Baum-Connes conjecture states that the map $\mu$ is an isomorphism of abelian groups. However, if $X$ is a sequence of expander graphs, there exists a \emph{non-compact} ghost projection in the Roe algebra $C^*(X)$. This projection defines an element in $K_0(C^*(X))$, which violates the conjecture because it lies outside the image of the assembly map $\mu$, as shown in \cite{HLS2002, WYI2012}. For generalized results extending from expander graphs to (measured) asymptotic expanders, see \cite{KLVZ2021, LSZ2023}. On the other hand, the most significant sufficient condition for the coarse Baum-Connes conjecture was established in the milestone work of G. Yu in 2000, who proved that the conjecture holds for any metric space of bounded geometry admitting a coarse embedding into a Hilbert space (see \cite{Yu2000}). This result established a crucial link between coarse geometry of Hilbert spaces and the analytic index theory.

In analogy with the closed ideal $\mathcal{K}$ of all compact operators, the collection of all ghosts in a Roe algebra forms a closed ideal $\mathcal{G}$ in the Roe algebra, referred to as the \emph{ghostly ideal}\footnote{It is also frequently referred to in the literature as the \emph{ghost ideal}.}. It turns out that the ghostly ideal $\mathcal{G}$ contains $\mathcal{K}$ and is in fact directly linked to the coarse Baum-Connes conjecture. Indeed, M.~Finn-Sell in \cite[Proposition~35]{FS2014} proved that
the inclusion map from the compact ideal $\mathcal{K}$ into the ghostly ideal $\mathcal{G}$ induces an isomorphism in $K$-theory if and only if the coarse assembly map $\mu$ is an isomorphism, provided the space $X$ admits a \emph{fibred} coarse embedding into a Hilbert space. The notion of fibred coarse embedding into Hilbert space was first introduced by Chen, Wang and Yu in \cite{CWY2013}, and it greatly generalizes Gromov’s notion of coarse embedding into Hilbert space. As consequences of \cite{Yu2000, FS2014}, if a space $X$ coarsely embeds into a Hilbert space, the inclusion map from the compact ideal $\mathcal{K}$ into the ghostly ideal $\mathcal{G}$ induces an isomorphism in $K$-theory (see \cite[Corollary~36]{FS2014}).

In recent years, there has been an uptick in interest in the study of general \emph{geometric ideal} $\mathcal{I}(X,U)$ and ghostly ideal $\mathcal{G}(X,U)$ associated with a specific invariant open subset $U$ in $\beta X$ (see \cite{WZ2023,WFZ2025}). Roughly speaking, for an invariant open set $U$ in $\beta X$, the corresponding ghostly ideal consists of operators whose matrix entries $T_{xy}$ vanish as $(x,y)$ tends to infinity along the ($\beta X\backslash U$)-direction, rather than requiring matrix entries to vanish in all directions at infinity.  We refer the reader to \Cref{def: geom vs ghost}. When $U=X$, the corresponding geometric ideal $\mathcal{I}(X,X)$ coincides exactly with the compact ideal $\mathcal{K}$, and the corresponding ghostly ideal $\mathcal{G}(X,X)$ coincides with the original ghostly ideal $\mathcal{G}$ of all ghost operators in the Roe algebra. On the other hand, we recover the Roe algebra when $U=\beta X$ in the sense that $\mathcal{I}(X,\beta X)=\mathcal{G}(X,\beta X)=C^*(X)$. Moreover, it is known that all ghostly ideals in the Roe algebra are geometric if and only if the underlying space $X$ has Property A (see e.g. \cite[Proposition~5.1]{WFZ2025}), and the canonical inclusion from any geometric ideal $\mathcal{I}(X,U)$ to the corresponding ghostly ideal $\mathcal{G}(X,U)$ induces an isomorphism in $K$-theory if the underlying space $X$ coarsely embeds into a Hilbert space (see \cite[Theorem~5.3]{WFZ2025}).

Progress on the coarse Baum-Connes conjecture remained limited until recently, when the connection between the coarse Baum-Connes conjecture and the coarse geometry of $\ell^p$-spaces ($1\le p<\infty$) became clearer. The principal achievements in this direction establish the injectivity of the coarse assembly map $\mu$ for spaces admitting a fibred coarse embedding into an $\ell^p$-space (see \cite{GLWZ2024}), and the bijectivity of $\mu$ for spaces admitting a coarse embedding into an $\ell^p$-space (see \cite{WXYZ2024}). A natural question arises in this perspective: does the $K$-theory isomorphism between the geometric ideal $\I(X,U)$ and the ghostly ideal $\G(X,U)$ persist for the space $X$ that coarsely embeds into an $\ell^p$-space? In this paper, we answer this question affirmatively (even for spaces coarsely embeddable into an $L^p$-space):

\begin{Thm}[\Cref{thm: main theorem} \& \Cref{thm: so do Lp}]\label{thm: 1.1}
Let $X$ be a metric space with bounded geometry. If $X$ admits a coarse embedding into some $\ell^p$-space (or $L^p$-space) with $p\in[1,\infty)$, then for any invariant open subset $U\subseteq\beta X$, the inclusion $i:\I(X,U)\to\G(X,U)$ induces an isomorphism in $K$-theory, i.e.,
$$i_*: K_*(\I(X,U))\to K_*(\G(X,U))$$
is an isomorphism of abelian groups, where $\I(X,U)$ and $\G(X,U)$ are the geometric and ghostly ideal associated with $U$ in the Roe algebra $C^*(X)$, respectively.
\end{Thm}

It should be emphasized that the proofs of the $K$-theory isomorphism between the geometric and ghostly ideals in \cite{WZ2023, WFZ2025} depend crucially on the fact that a-T-menable groupoids are $K$-amenable (see \cite{Tu1999}). Indeed, a metric space $X$ with bounded geometry is coarsely embeddable into a Hilbert space if and only if the associated coarse groupoid $G(X)$ is a-T-menable (see \cite[Theorem~5.4]{STY2002}). For a space $X$ coarsely embeddable into an $\ell^p$-space, whether its coarse groupoid $G(X)$ is $K$-amenable remains an open question. It is precisely this gap that necessitates a fundamentally different approach in this paper, departing from earlier methods.

Our method to prove \Cref{thm: 1.1} is to generalize the $\ell^p$-Dirac-dual-Dirac construction in \cite{WXYZ2024} from the extreme case where $U=\beta X$ to a general invariant open subset $U\subseteq \beta X$. The argument proceeds in three main stages:

\emph{1. Reduction to sparse subspaces}: Since the $\ell^p$-Dirac-dual-Dirac construction is primarily applicable to sparse spaces, we first reduce the proof of \Cref{thm: 1.1} to the sparse subspaces of $X$ (see \Cref{pro: reduction to sparse space}).

\emph{2. Restriction of the construction to ideals}: On a sparse subspace, the $\ell^p$-Dirac-dual-Dirac construction restricts to the ghostly and geometric ideals, respectively (see \Cref{pro: dirac dual dirac}). This allows us to further reduce the proof to a comparison between the geometric and ghostly ideals within a twisted Roe algebra.

\emph{3. Comparison within the twisted framework}: In the twisted Roe algebra, the distinction between elements in the geometric ideal and the corresponding ghostly ideal vanishes asymptotically (see \Cref{lem: twisted geo vs gho}), which completes the proof of our main theorem.

Several corollaries follow as direct applications of our main result. The first corollary concerns the so-called \emph{relative coarse Baum-Connes conjecture}, introduced in \cite{GWZ2025}. Its validity provides a local obstruction to the existence of Riemannian metrics with positive scalar curvature at infinity (see \cite{GWZ2025} for details). 

\begin{Cor}[\Cref{cor: relative CBC}]
Let $X$ be a metric space with bounded geometry that admits a coarse embedding into an $\ell^p$-space ($p\geq 1$). For any subspace $Y$ in $X$, the relative coarse Baum-Connes conjecture holds for $(X, Y)$. 
\end{Cor}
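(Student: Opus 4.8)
The plan is to deduce the relative coarse Baum–Connes conjecture for the pair $(X,Y)$ from the main $K$-theory isomorphism $i_*\colon K_*(\I(X,U))\to K_*(\G(X,U))$, applied to the invariant open set $U$ associated with the subspace $Y$ inside the coarse groupoid. First I would recall the formulation of the relative coarse Baum–Connes conjecture of \cite{GWZ2025}: it asserts that a certain relative assembly map, built from the pair consisting of the Roe algebra $C^*(X)$ and the geometric ideal $\I(X,U)$ of operators supported near $Y$, is an isomorphism on $K$-theory; equivalently, one should think of it as the coarse Baum–Connes conjecture for the quotient $C^*(X)/\I(X,U)$ together with the conjecture for $\I(X,U)$ itself, combined via the six-term exact sequence. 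The key point is that the relative conjecture as \emph{originally stated} may involve the ghostly ideal $\G(X,U)$ rather than the geometric ideal $\I(X,U)$ (or vice versa), and our theorem is precisely what bridges these two via a Five Lemma argument.

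The main steps, in order, are as follows. First, set up the commuting ladder of six-term exact sequences: one coming from the short exact sequence $0\to \I(X,U)\to C^*(X)\to C^*(X)/\I(X,U)\to 0$ on the level of Roe algebras and its localized (assembly-domain) counterpart $0\to \I_L(X,U)\to C^*_L(X)\to C^*_L(X)/\I_L(X,U)\to 0$, with the vertical maps being the respective assembly maps; and similarly with $\G(X,U)$ in place of $\I(X,U)$. Second, invoke known results — the coarse Baum–Connes conjecture for $X$ itself, which holds by \cite{WXYZ2024} since $X$ coarsely embeds into an $\ell^p$-space — to conclude that the assembly map for $C^*(X)$ is an isomorphism; combined with the main theorem's isomorphism $i_*$ and a Five Lemma chase, this forces the assembly map for the quotient $C^*(X)/\G(X,U)$ (equivalently $C^*(X)/\I(X,U)$, since these quotients have the same $K$-theory by our theorem) to be an isomorphism, which is exactly the content of the relative conjecture for $(X,Y)$. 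Third, one must check that the localized ideals $\I_L(X,U)$ and $\G_L(X,U)$ also satisfy the analogous $K$-theory isomorphism — this should follow either from the same twisted-algebra techniques applied at the level of localization algebras, or from a standard Mayer–Vietoris/Eilenberg-swindle-type argument reducing the localized statement to the non-localized one, since localization algebras compute the left-hand side of assembly and are functorial.

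The main obstacle I anticipate is matching conventions: pinning down precisely which ideal (geometric or ghostly, and whether maximal or reduced) appears in the definition of the relative coarse Baum–Connes conjecture in \cite{GWZ2025}, and verifying that the relevant diagram of assembly maps genuinely commutes and that all six-term sequences are natural with respect to the inclusion $i$. Once the bookkeeping is in place, the argument is a formal consequence: the theorem supplies the crucial non-formal input (the $K$-theory isomorphism $i_*$), the coarse embeddability into $\ell^p$ supplies the coarse Baum–Connes isomorphism for the ambient space via \cite{WXYZ2024}, and the Five Lemma does the rest. A minor additional point to address is the case where $Y$ is unbounded versus bounded: when $Y$ is bounded, $\I(X,U)=\K$ and $\G(X,U)$ is the classical ghost ideal, and the statement specializes to the Finn-Sell-type result, which serves as a consistency check; for general $Y$ the invariant open set $U$ is $\{(x,x')\in G(X): d(x,Y)\text{ and }d(x',Y)\text{ are }U\text{-related}\}$ in the sense of \Cref{def: geom vs ghost}, and one should confirm that the relative assembly map is built from exactly this $U$.
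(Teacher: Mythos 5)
Your proposal takes essentially the same approach as the paper: a five-column ladder of (half) six-term exact sequences with assembly maps as vertical arrows, the coarse Baum--Connes isomorphism for the ambient space $X$ from \cite{WXYZ2024} giving the two $C^*(X)$-columns, and a Five Lemma chase. To make your Step 2 precise --- and to dispose of the Step 3 worry about comparing $\I_L$ with $\G_L$ --- the paper identifies the two ideal columns, via the chain $K_*(\G(X,U_Y))\cong K_*(\I(X,U_Y))\cong K_*(C^*(Y))$ supplied by the main theorem, with the coarse Baum--Connes assembly map for $Y$ itself (an isomorphism because $Y$ also coarsely embeds into $\ell^p$); it is this identification, not the horizontal isomorphism $i_*$ alone, that closes the Five Lemma.
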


The second involves $\mathrm{ONL}_{\mathcal{P}_{\mathrm{Fin}}}$—the \emph{operator norm localization property for equi-approximable finite-rank projections} introduced in \cite{BFV2024}. This property, which plays a role in rigidity problems and the structure of Cartan masas in uniform Roe algebras, is a weaker version of the classical \emph{operator norm localization property} \cite{CTWY2008}. It is known that $\mathrm{ONL}_{\mathcal{P}_{\mathrm{Fin}}}$ is equivalent to the condition that all ghost projections on every sparse subspace are compact. Nevertheless, a purely geometric characterization of $\mathrm{ONL}_{\mathcal{P}_{\mathrm{Fin}}}$ remains an open question.  In this paper, we employ higher index theory (see e.g. Lemma~\ref{lem: when ghost notequal cpt}) to verify this property for $\ell^p$ coarsely embeddable metric spaces.

\begin{Cor}[\Cref{cor: CElp implies ONLFin}]
Let $X$ be a metric space with bounded geometry. If $X$ admits a coarse embedding into an $\ell^p$-space, then $X$ has operator norm localization for equi-approximable finite-rank projections.
\end{Cor}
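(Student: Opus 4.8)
The plan is to deduce the corollary from the main theorem, \Cref{thm: main theorem}, together with the characterization of the operator norm localization property for equi-approximable finite-rank projections obtained in \cite{BFV2024}. First I would specialize \Cref{thm: main theorem} to the invariant open subset $U_0\subseteq G(X)$ that corresponds to a bounded subspace of $X$, say a single point. As recalled in the introduction, for such a $U_0$ the geometric ideal $\I(X,U_0)$ is the algebra $\K$ of compact operators on the standard $X$-module, while the ghostly ideal $\G(X,U_0)$ is the classical ghostly ideal $\G(X)$ of all ghost operators in the Roe algebra $C^*(X)$. Since $X$ coarsely embeds into some $\ell^p$-space, \Cref{thm: main theorem} then yields that the inclusion induces an isomorphism $i_*\colon K_*(\K)\xrightarrow{\ \cong\ }K_*(\G(X))$.

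Next I would invoke \cite{BFV2024}, where, for a bounded geometry space $X$, the property $ONL_{\mathcal{P}_{Fin}}$ is shown to be controlled by the pair $(\K,\G(X))$ at the level of $K$-theory; in particular, the inclusion $\K\hookrightarrow\G(X)$ inducing an isomorphism on $K$-theory implies $ONL_{\mathcal{P}_{Fin}}$. The underlying mechanism is that a failure of $ONL_{\mathcal{P}_{Fin}}$ would produce, after an assembling argument, a non-compact ghost projection that is equi-approximable by finite-rank projections, and the presence of such a projection is incompatible with $\K\hookrightarrow\G(X)$ being a $K$-theory isomorphism. Feeding in the isomorphism from the first step, $X$ must therefore have $ONL_{\mathcal{P}_{Fin}}$.

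The step I expect to require the most care — though not a genuinely new idea — is the bookkeeping that aligns the framework of \cite{BFV2024} with the one used here: one must check that the localization condition of \cite{BFV2024}, phrased in terms of unit vectors with controlled support for finite-propagation operators on the standard module (equivalently, on a coarse disjoint union of finite metric spaces), is indeed governed by the $K$-theory of the ghostly ideal $\G(X)$ precisely as defined in this paper — Roe-algebraic rather than uniform-Roe-algebraic, and in the appropriate maximal or reduced completion — and that the reduction from a failure of $ONL_{\mathcal{P}_{Fin}}$ to a $K$-theoretically obstructive, non-compact ghost projection goes through verbatim in the present conventions. Granting \cite{BFV2024}, the corollary then follows formally from \Cref{thm: main theorem}.
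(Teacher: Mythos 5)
Your proposal follows the paper's route: specialize \Cref{thm: main theorem} to the invariant open set generated by a bounded set, so that $\I=\K$ and $\G$ is the classical ghostly ideal, and then combine the resulting $K$-theory isomorphism with \cite{BFV2024} to exclude non-compact ghost projections. One attribution should be corrected, though: the $K$-theoretic mechanism you describe is not taken from \cite{BFV2024}. What \cite[Theorem~1.3]{BFV2024} gives is only the reduction of a failure of $ONL_{\P_{Fin}}$ to the existence of a non-compact ghost projection in $C^*_u(X')$ for some \emph{sparse} subspace $X'\subseteq X$; the link to $K$-theory is the paper's own \Cref{lem: when ghost notequal cpt}, proved for sparse spaces by a trace map $\tau_*\colon K_0(C^*(X'))\to\prod_n\IZ/\bigoplus_n\IZ$ that vanishes on $K_0(\K)$ but detects the class of a non-compact ghost projection (with injectivity of $i_*$ coming from \cite{OOY2009}). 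The ``bookkeeping'' you anticipate amounts precisely to this trace argument together with the passage from $C^*_u(X')$ to $C^*(X')$ by tensoring the ghost projection with a rank-one projection, and the reduction to a sparse subspace is not cosmetic — it is needed both for \cite{BFV2024} and for \Cref{lem: when ghost notequal cpt} — so the main theorem is ultimately applied to $X'$ rather than to $X$ directly.
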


Finally, we extend our results from the reduced setting to the maximal setting. By combining our main theorem with the techniques in \cite{HIT2020, WXYZ2024}, we deduce the following result regarding the maximal coarse Baum-Connes conjecture:

\begin{Cor}[\Cref{cor: maximal corollary}]\label{cor1.4}
Let $X$ be a metric space with bounded geometry that admits a coarse embedding into an $\ell^p$-space ($p\geq 1$). \begin{itemize}
\item[(1)] The canonical quotient map $\pi: C^*_{\max}(X)\to C^*(X)$ between the maximal and reduced Roe algebras induces an isomorphism in $K$-theory, i.e.,
$$\pi_*: K_*(C^*_{\max}(X))\xrightarrow{\cong} K_*(C^*(X)).$$
\item[(2)] For any subspace $Y$ in $X$, the canonical quotient map $\pi: C^*_{\max,Y,\infty}(X)\to C^*_{Y,\infty}(X)$ between the maximal and reduced quotient Roe algebras induces an isomorphism in the level of $K$-theory, i.e.,
$$\pi_*: K_*(C^*_{\max,Y,\infty}(X))\xrightarrow{\cong} K_*(C^*_{Y,\infty}(X)).$$
\item[(3)] For any subspace $Y$ in $X$, the maximal relative coarse Baum-Connes conjecture holds for $(X, Y)$.
\end{itemize}\end{Cor}

Our results motivate the following open questions:

\begin{Que}
Let $X$ be a metric space with bounded geometry.
\begin{itemize}
\item [1.] If $X$ admits a coarse embedding into an $\ell^p$-space, is the coarse groupoid $G(X)$ necessarily $K$-amenable? Strong evidence for this can be found in Corollary~\ref{cor1.4}.

\item[2.] In \cite{DG2024}, a twisted version of the coarse Baum-Connes conjecture with coefficients is introduced. It is natural to ask whether the twisted coarse Baum-Connes conjecture with coefficients holds for $X$ if $X$ admits a coarse embedding into an $\ell^p$-space?
\end{itemize}
\end{Que}

The paper is organized as follows. In \Cref{sec: preliminary}, we recall some preliminaries on coarse geometry and ideal structures of Roe algebras. In \Cref{sec: proof}, we shall prove our main theorem in four steps. Finally, we discuss applications to the relative and maximal coarse Baum-Connes conjectures as well as the operator norm localization property for finite-rank projections in \Cref{sec: applications}.

\section{Geometric and ghostly ideals in Roe algebras}\label{sec: preliminary}

In this section, we shall recall some basic facts on coarse geometry and the ideal structure of Roe algebras.

\subsection{Coarse groupoid}

Throughout this paper, we shall always assume $(X,d)$ to be a metric space with bounded geometry, i.e., for any $R>0$,
$$\sup_{x\in X}\# B(x,R)<\infty,$$
where $B(x,R)=\{x'\in X\mid d(x,x')\leq R\}$. Denote by $\beta X$ the Stone-\v{C}ech compactification of $X$, which is the set of all ultrafilters on $X$ endowed with a topology making it a compact Hausdorff space such that $C(\beta X)\cong\ell^\infty(X)$. The reader is referred to \cite{Goldbring2022} for details of ultrafilters and Stone-\v{C}ech compactification. For each $R>0$, we define the \emph{$R$-diagonal} of $X\times X$ to be
$$\Delta_R=\{(x,y)\in X\times X\mid d(x,y)\leq R\}\subseteq X\times X.$$
When $R=0$, the set $\Delta_0$ is the diagonal in $X\times X$. A subset of an $R$-diagonal is called an \emph{entourage} (or a \emph{controlled set}). The set of all entourages, denoted by $\E_d$, is called the \emph{coarse structure} of $X$ associated with the metric. The \emph{coarse groupoid} of $X$, denoted by $G(X)$, is defined to be
$$G(X)=\bigcup_{R\geq 0}\overline{\Delta_R}\subseteq\beta(X\times X).$$
The space $\beta X$ is the unit space of $G(X)$. The projection onto the first (second, resp.) coordinate $r: X\times X\to X$, ($s: X\times X\to X$, resp.) extends continuously to the range map $r: G(X)\to\beta X$ (the source map $s: G(X)\to\beta X$, resp.). The reader is referred to \cite{STY2002} for details of the coarse groupoid. A subset $U\subset \beta X$ is called \emph{invariant} if $r(\gamma)\in U$ is equivalent to $s(\gamma)\in U$ for any $\gamma\in G(X)$. For a subset $Y\subseteq X$, we define
$$U_Y=\bigcup_{R\geq 0}\overline{B(Y,R)}\subseteq \beta X,$$
where $B(Y,R)=\bigcup_{y\in Y}B(y,R)$. It is direct to see that $U_Y$ is an invariant open set. We shall denote $F_Y=U_Y^c$, which is an invariant closed set.

A map $t: D\to R$ is called a \emph{partial translation} in $X$ if\begin{itemize}
\item[(1)] $D,R$ are subsets of $X$ and $t$ is a bijection between $D$ and $R$;
\item[(2)] there exists $M>0$ such that $\sup_{x\in D}d(x,t(x))\leq M$.
\end{itemize}
It is proved in \cite[Appendix C]{SpaWil2017} that for any $\gamma\in G(X)$ with $s(\gamma)=\alpha\in\beta X$, there exists a partial translation $t:D\to R$ in $X$ such that $\gamma=\lim_{x\to\alpha}(t(x),x)\in G(X)$.

For any map $f: X\to Y$, we shall still denote $f$ the extended continuous map $\beta X\to \beta Y$ with a little abuse of notation. For any subset $A\subseteq\beta X$, we denote $G(X)|^A_A=r^{-1}(A)\cap s^{-1}(A)$,  which is a subgroupoid. Let $Y\subseteq X$ be a subset. Then the canonical inclusion map $i: Y\to X$ extends to an inclusion $i:\beta Y\to\beta X$. Thus, one can identify $\beta Y$ with the closure of $Y$ in $\beta X$. As a result, we have that $G(Y)=G(X)|^{\beta Y}_{\beta Y}$. For any invariant subset $ U \subseteq \beta X$, we shall denote $U^{\beta Y}=\beta Y\cap U$, and it is direct to see that $U^{\beta Y}$ is $G(Y)$-invariant. A \emph{coarse ideal pair} $(X, U)$ includes a metric space $X$ and an invariant subset $U$. Two pairs $(X, U)$ and $(Y, V)$ are \emph{coarsely equivalent} if there exists a coarse equivalence $f: X\to Y$ such that $V$ is the smallest invariant open set containing $f(U)$. In this case, it is equivalent to say that $V$ is the minimal invariant open subset such that $U=f^{-1}(V)$.

\begin{Exa}
Let $Y\subseteq X$ be a subset. We shall denote $Y_R=B(Y,R)$ the $R$-neighbourhood of $Y$. Then the invariant open subset generated by $Y$, denoted by $U_Y$, is defined to be $U_Y=\bigcup_{R\geq 0}\overline{Y_R}\subseteq\beta X$. It is direct to check that $U_Y$ is indeed an invariant open subset.

Let $f: X\to Z$ be a coarse equivalence. Set $W$ to be the $G(Z)$-invariant open subset of $\beta Z$ generated by $f(Y)$. Then it is direct to check that $(X, U_Y)$ is coarsely equivalent to $(Z, W)$.
\end{Exa}

\subsection{Roe algebras and their ideals}

Fix $\H$ to be an infinite-dimensional, separable Hilbert space. Denote $\H_X=\ell^2(X)\ox\H$. Then there is a canonical representation of $C_0(X)$ on $\H_X$ by
$$f\cdot\delta_x\ox v=f(x)\delta_x\ox v$$
for $\delta_x\in\ell^2(X)$ and $v\in\H$. An operator $T\in\B(\H_X)$ can be viewed as an $X$-by-$X$ matrix with value in $\B(\H)$, where its $(x,y)$-th entry is given by:
$$\B(\H)\ni T_{xy}=\delta_xT\delta_y: \IC\delta_y\ox\H\to\IC\delta_x\ox\H,$$
where $\delta_x, \delta_y\in C_0(X)$ are Dirac functions on $x$ and $y$, respectively.

\begin{Def}\label{def: Roe algebra}
The \emph{algebraic Roe algebra} of $X$, denoted by $\IC[X,\H_X]$ (or $\IC[X]$ for simplicity), is a subalgebra of $\B(\H_X)$ consisting of all operators $T$ satisfying:
\begin{itemize}
\item $T$ has \emph{finite propagation} (or \emph{controlled propagation}), i.e., the \emph{support} of $T$
$$\supp(T)=\{(x,y)\in X\times X\mid T_{xy}\ne 0\},$$
is an entourage.
\item $T_{xy}\in\K(\H)$ for any $(x,y)\in X\times X$.
\end{itemize}
The completion of $\IC[X]$ in $\B(\H_X)$ is called the \emph{Roe algebra} of $X$, denoted by $C^*(X,\H_X)$ (or $C^*(X)$ for simplicity).
\end{Def}

We should remark that the definition of the Roe algebra does not depend on the choice of $\H$ up to a non-canonical isomorphism.
The following lemma is proved in \cite[Lemma 4.4]{STY2002}, which reveals that Roe algebras can be fundamentally understood as groupoid $C^*$-algebras. If we replace $\H$ with $\IC$, the resulting Roe algebra is called the \emph{uniform Roe algebra}, denoted by $C^*_u(X)$.

\begin{Lem}
Let X be a metric space with bounded geometry. There is a natural isomorphism
$$C^*(X)\cong \ell^{\infty}(X,\K(\H))\rtimes_r G(X).\qed$$
\end{Lem}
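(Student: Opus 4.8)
The plan is to prove the isomorphism in two stages, following the standard approach: an \emph{algebraic} identification of the finite-propagation operators $\IC[X]$ with the algebraic crossed product, and then a comparison of the two $C^*$-norms used to complete the respective sides. Throughout one uses that $G(X)$ is a locally compact Hausdorff \'etale groupoid with totally disconnected unit space $\beta X$ --- it is the increasing union of the compact open sets $\overline{\Delta_R}$ --- and that $\ell^\infty(X,\K(\H))$ is a $C_0(\beta X)=\ell^\infty(X)$-algebra, so that $\ell^\infty(X,\K(\H))\rtimes_r G(X)$ is defined, with norm $\|a\|_r=\sup_{u\in\beta X}\|\pi_u(a)\|$ where $\pi_u$ is the regular representation on $\ell^2(G(X)_u)\ox\H$ and $G(X)_u=s^{-1}(u)$.

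\textbf{Step 1 (algebraic isomorphism).} First I would establish a $*$-isomorphism $\IC[X]\cong C_c(G(X),\ell^\infty(X,\K(\H)))$. Bounded geometry lets one write any entourage $E\subseteq\Delta_R$ as a finite disjoint union $E=\bigsqcup_{i=1}^{k}\mathrm{graph}(t_i)$ of graphs of partial translations, by viewing $E$ as a bipartite graph on $X\sqcup X$ of degree at most $\sup_{x}\# B(x,R)<\infty$ and applying a greedy edge-colouring. Hence every $T\in\IC[X]$ is a finite sum $T=\sum_i a_iV_{t_i}$ with $a_i\in\ell^\infty(X,\K(\H))$ acting diagonally and $V_{t_i}$ the partial isometry on $\H_X$ induced by $t_i$. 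For a partial translation $t$ with displacement bounded by $M$, the set $\{(t(x),x):x\in D\}\subseteq\Delta_M$ has closure $\widehat{t}$ a compact open bisection of $G(X)$, and I would send $aV_t$ to the section of $G(X)$ supported on $\widehat{t}$ determined by $a$. It then remains to check that composition of partial translations together with pointwise multiplication of diagonal coefficients corresponds to groupoid convolution, that $V_t^{*}=V_{t^{-1}}$ corresponds to $\gamma\mapsto\gamma^{-1}$, and that --- $G(X)$ being ample, so that $C_c(G(X),\ell^\infty(X,\K(\H)))$ is spanned by sections supported on single compact open bisections --- the assignment is bijective.

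\textbf{Step 2 (norm comparison).} Next I would show $\|a\|_r=\|a\|_{\B(\H_X)}$ for $a\in\IC[X]$, so that the Roe completion and the reduced-crossed-product completion coincide. For a genuine point $x\in X$, the facts that $x$ is isolated in $\beta X$ and $\# B(x,R)<\infty$ force $G(X)_x=s^{-1}(x)=X\times\{x\}$; under $\ell^2(G(X)_x)\ox\H\cong\H_X$ the representation $\pi_x$ becomes precisely the defining representation of $\IC[X]$ on $\H_X$, so $\|\pi_x(a)\|=\|a\|_{\B(\H_X)}$, whence $\|a\|_r\ge\|a\|_{\B(\H_X)}$. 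For the reverse inequality I would bound $\|\pi_{u_0}(a)\|\le\|a\|_{\B(\H_X)}$ for every boundary unit $u_0\in\beta X\setminus X$. Fixing $\epsilon>0$ and a finitely supported unit vector $\xi\in\ell^2(G(X)_{u_0})\ox\H$, the finite propagation of $a$ confines $\xi$ and $\pi_{u_0}(a)\xi$ to a fixed finite-dimensional block, and since $a$ is a sum of finitely many bands with bounded $\K(\H)$-valued coefficients one can choose a genuine point $x\in X$ in a suitable clopen neighbourhood of $u_0$ together with an isometry $W$ of that block into $\H_X$ --- induced by the partial translations defining the relevant elements of $s^{-1}(u_0)$ --- with $\|\pi_{u_0}(a)\xi-W^{*}\pi_x(a)W\xi\|\le\epsilon$. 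Then $\|\pi_{u_0}(a)\xi\|\le\|a\|_{\B(\H_X)}+\epsilon$; letting $\epsilon\to0$ and $\xi$ range over a dense set gives the claim. Combining the two inequalities yields $\|a\|_r=\|a\|_{\B(\H_X)}$, so the $*$-isomorphism of Step 1 extends to $C^*(X)\cong\ell^\infty(X,\K(\H))\rtimes_r G(X)$.

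\textbf{The main obstacle} is the last estimate $\|\pi_{u_0}(a)\|\le\|a\|_{\B(\H_X)}$ at boundary units --- equivalently, the statement that the limit operators of a finite-propagation operator have norm no larger than the operator itself. Since $u\mapsto\|\pi_u(a)\|$ is only lower, not upper, semicontinuous on $\beta X$, this cannot be deduced by a soft density argument, and one genuinely has to transport finite-dimensional blocks from the boundary fibre back to an interior fibre along the bisection structure of $G(X)$. Everything else is routine coarse-geometric bookkeeping; one should just keep track of the fact that $\ell^\infty(X,\K(\H))$ here denotes all bounded $\K(\H)$-valued functions on $X$, which is strictly larger than $\ell^\infty(X)\ox_{\min}\K(\H)$ but is still a $C_0(\beta X)$-algebra, so that the crossed product and its regular representations are defined exactly as stated.
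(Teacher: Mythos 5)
The paper does not prove this lemma: it quotes it directly from \cite[Lemma 4.4]{STY2002} with no argument of its own, so there is no ``paper proof'' to compare against; your proposal is, in outline, the standard argument behind the cited result. Step 1 (decompose entourages into finitely many graphs of partial translations via bounded geometry and identify $\IC[X]$ with $C_c(G(X),\ell^\infty(X,\K(\H)))$ through compact open bisections) and the ``$\geq$'' half of Step 2 (at an isolated unit $x\in X$ the fibre is $X\times\{x\}$ and $\pi_x$ is the defining representation) are correct and routine, and you have located the genuine content in the boundary estimate.

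There is, however, a real imprecision in how that estimate is set up, and it is where the non-uniform coefficient algebra bites. You describe $\pi_u$ as acting on $\ell^2(G(X)_u)\ox\H$ for all $u\in\beta X$; this is only right for $u\in X$, where the fibre algebra is $\K(\H)$. For a boundary unit $u_0$ the fibre of the $C(\beta X)$-algebra $\ell^\infty(X,\K(\H))$ is the ultraproduct-type quotient $A_{u_0}=\ell^\infty(X,\K(\H))/\{f:\lim_{u_0}\|f(x)\|=0\}$, and the regular representation lives on $\ell^2(G(X)_{u_0})\ox H_{u_0}$ for a representation $H_{u_0}$ of $A_{u_0}$, which is not $\H$ and admits no $A_{u_0}$-compatible embedding into $\H$. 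Consequently the isometry $W$ you invoke --- ``induced by the partial translations'' --- only handles the $\ell^2(G_{u_0})$ tensor factor; it does not exist on the $H_{u_0}$ factor, and the inequality $\|\pi_{u_0}(a)\xi-W^*\pi_x(a)W\xi\|\le\epsilon$ as written does not make sense. (In the uniform Roe case $A_{u_0}\cong\IC$ and this issue disappears, which is perhaps what makes the sketch feel complete.) The fix keeps your transport idea but routes it through states instead of a single spatial isometry: for finitely supported $\xi$, write $\|\pi_{u_0}(a)\xi\|^2=\langle\pi_{u_0}(a^*a)\xi,\xi\rangle$ as a positive linear functional on a finite matrix algebra $M_m(A_{u_0})$ evaluated on the matrix of fibre values of $a^*a$ along the bisections carrying $\xi$; then use that states on $A_{u_0}$ arising as $u_0$-limits of states on $\K(\H)$ are weak$^*$-dense (a standard ultraproduct fact), so the functional is a $u_0$-limit of functionals computed at genuine points $x\in X$, each of which is bounded by $\|a\|_{\B(\H_X)}$. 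With this replacement your argument closes, and it matches the approach in the cited source.
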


Let $\I$ be an ideal of $C^*(X)$. For any $T\in\I$, we define the $\varepsilon$-support of $T$ to be
$$\supp_\varepsilon(T)=\{(x,y)\in X\times X\mid \|T_{xy}\|\geq \varepsilon\}.$$
Since $T$ is in the Roe algebra, it is direct to see that $\supp_\varepsilon(T)$ is an entourage. We define
$$U(\I)=\bigcup_{T\in\I,\varepsilon>0}\overline{r(\supp_\varepsilon(T))}\subseteq \beta X.$$
Since $\I$ is an ideal, one can prove that $U(\I)$ is an invariant open subset, see \cite{CW2004} for a detailed discussion.

\begin{Rem}
For ideals in Roe algebras, we can analyze them through two components:
\begin{itemize}
\item Each ideal corresponds to a $G(X)$-invariant open set (determined by calculating operator supports as above);
\item After fixing the support, the coefficient part of each operator (i.e., the $\ell^\infty(X,\mathcal{K})$ component) further determines the rank distribution of operator entries within the ideal, as discussed in \cite{CW2006, WFZ2025}.
\end{itemize}
In this paper, we will not consider the $\ell^\infty(X,\mathcal{K})$ aspect. That is, we assume the ideal $\mathcal{I}$ has maximal rank distribution over its corresponding invariant open set, i.e., $\mathcal{I}$ contains all rank distributions supported on $U(\mathcal{I})$. We will not review the precise definition of rank distribution here; interested readers may refer to \cite{CW2006, WFZ2025} for details.
\end{Rem}

\begin{Def}\label{def: geom vs ghost}
Let $U\subseteq \beta X$ be an invariant open set. The \emph{geometric ideal} of $C^*(X)$ with respect to $U$, denoted by $\I(X, U)$, is defined to be the closed ideal generated by
$$\II[X,U]=\left\{T\in\IC[X]\ \big|\ \overline{r(\supp_{\varepsilon}(T))}\subseteq U\right\}.$$
The \emph{ghostly ideal} of $C^*(X)$ with respect to $U$, denoted by $\G(X,U)$, is defined to be
$$\G(X,U)=\left\{T\in C^*(X)\ \big|\ \overline{r(\supp_{\varepsilon}(T))}\subseteq U\right\}.$$
\end{Def}

One should notice that $\II[X, U]$ is dense in $\I(X, U)$ and is also an ideal of $\IC[X]$.
Note that the definitions of geometric ideals and ghostly ideals are very similar, but they have essential differences. The geometric ideal is constructed in the \emph{algebraic Roe algebra} by taking the part where the $\varepsilon$-support set lies in $U$ and then generating an ideal, while the ghostly ideal is obtained in the \emph{completed} Roe algebra by taking the part where the $\varepsilon$-support set lies in $U$. The essential difference lies in the order of taking the completion and imposing the support condition. Therefore, it is straightforward to see that the geometric ideal is a subalgebra of the ghostly ideal. Moreover, the ghostly ideal is larger than the corresponding geometric ideal, and the two ideals coincide only when the space has Yu's Property A (see \cite{WZ2023}).

Our main result of this paper is as follows.

\begin{Thm}\label{thm: main theorem}
Let $X$ be a metric space with bounded geometry. 
If $X$ admits a coarse embedding into some $\ell^p$-space with $p\in[1,\infty)$, then for any invariant open subset $U\subseteq \beta X$, the inclusion $i:\I(X,U)\to\G(X,U)$ induces an isomorphism on $K$-theory, i.e.,
\begin{equation}\label{eq: inclusion2}i_*: K_*(\I(X,U))\to K_*(\G(X,U))\end{equation}
is an isomorphism.
\end{Thm}

\begin{Rem}
By the following six-term exact sequence 
$$\begin{tikzcd}
K_0(\I(X,U)) \arrow[r]   & K_0(\G(X,U)) \arrow[r] & K_0\left(\frac{\G(X,U)}{\I(X,U)}\right) \arrow[d] \\
K_1\left(\frac{\G(X,U)}{\I(X,U)}\right) \arrow[u] & K_1(\G(X,U)) \arrow[l] & K_1(\I(X,U)), \arrow[l]  
\end{tikzcd}$$
we conclude that the map $i_*$ in \eqref{eq: inclusion2} is an isomorphism if and only if $K_*(\frac{\G(X,U)}{\I(X,U)})=0$. As a result, for an $\ell^p$-coarsely embeddable space $X$, one has that  $K_*(\frac{\G(X,U)}{\I(X,U)})=0$ for any invariant open subset $U\subseteq \beta X$.
\end{Rem}

As a result of P.~Nowak \cite{Nowak2006}, for $p\in[1,2]$, coarse embeddability into $\ell^p$ is equivalent to coarse embeddability into Hilbert space. In this case, the theorem is proved in \cite{WFZ2025}. 
It is worth mentioning that for $p, q\in[2,\infty)$, $\ell^p$ admits a coarse embedding into $\ell^q$ if and only if $p\leq q$, see \cite[Theorem 1.10]{MN2008}. Thus, coarse embedding into $\ell^p$-space is weaker than coarse embedding into Hilbert space when $p>2$. 
However, $\ell^p$ is not of bounded geometry and there is still no known example of a metric space with \emph{bounded geometry} that coarsely embeds into $\ell^p$ for $p> 2$ but can not coarsely embed into Hilbert space.

\subsection{Coarse invariance of ghostly/geometric ideals}

In this subsection, we shall show that the ghostly ideal is coarsely invariant. The reader is referred to \cite[Section 6]{JZ2025} for some relevant discussion.

\begin{Pro}
If $f: X\to Y$ is a coarse equivalence between the pair $(X,U)$ and $(Y,V)$, then $f$ induces an isometry $W_f:\H_X\to H_Y$ such that
$$Ad_{W_f}: \I(X, U)\to \I(Y,V)\quad\text{and}\quad Ad_{W_f}: \G(X, U)\to \G(Y,V).$$
In particular, $W_f$ can be chosen to be a unitary.
\end{Pro}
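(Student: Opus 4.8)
The plan is to build the isometry $W_f$ from the coarse equivalence $f$ in the standard way, and then verify that conjugation by it respects both the support condition defining the ideals and (for the geometric ideal) the finite-propagation/locally-compact conditions. Concretely, since $f\colon X\to Y$ is a coarse equivalence, there is a coarse inverse $g\colon Y\to X$ with $g\circ f$ and $f\circ g$ close to the respective identities; after restricting $f$ to a subset one may arrange (by a routine bounded-geometry argument, partitioning $Y$ into the fibres $f^{-1}(y)$ which have uniformly bounded cardinality) a decomposition that yields an isometry $V_f\colon\ell^2(X)\to\ell^2(Y)$ intertwining the multiplication representations up to a controlled set; tensoring with $\mathrm{id}_\H$ gives $W_f\colon\H_X\to\H_Y$. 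Since $\H$ is infinite-dimensional and separable, the defect projections $1-W_fW_f^*$ and $1-W_f^*W_f$ are supported near the diagonal with locally finite-dimensional fibres, so $W_f$ can be absorbed into a genuine unitary $U_f$ on $\H_Y$ (resp.\ $\H_X$) after the standard stabilization trick; conjugation by this unitary agrees with $\mathrm{Ad}_{W_f}$ on the relevant subalgebras, which is why $W_f$ may be taken unitary.

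Next I would check that $\mathrm{Ad}_{W_f}$ carries $\IC[X]$ onto $\IC[Y]$ and $C^*(X)$ onto $C^*(Y)$: this is classical (Roe algebras are coarse invariants), the point being that $W_f T W_f^*$ has support contained in $f\times f$ applied to a neighbourhood of $\supp(T)$, hence an entourage in $Y$, and that $W_f$ has "finite propagation" in the sense that $(W_f)_{y,x}\ne 0$ only for $(f(x),y)$ in a fixed controlled set. The heart of the matter is to track the $\varepsilon$-supports. Using that $f$ is a bijective correspondence up to bounded error, one shows $r\big(\supp_{\varepsilon'}(W_fTW_f^*)\big)$ is contained in a controlled enlargement of $f\big(r(\supp_{\varepsilon}(T))\big)$ for a comparable $\varepsilon'$, and conversely; passing to closures in $\beta Y$ and using that $f$ sends $U_X$-directions to $V$-directions (this is exactly the hypothesis that $V$ is the smallest invariant open set containing $f(U)$, equivalently $U=f^{-1}(V)$), one gets $\overline{r(\supp_\varepsilon(T))}\subseteq U \iff \overline{r(\supp_{\varepsilon'}(W_fTW_f^*))}\subseteq V$. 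This gives $\mathrm{Ad}_{W_f}(\G(X,U))=\G(Y,V)$ directly from \Cref{def: geom vs ghost}, and applying the same bookkeeping to the dense subalgebra $\II[X,U]\subseteq\IC[X]$ (which is preserved since $\mathrm{Ad}_{W_f}$ preserves $\IC[X]$ and the support condition) and then closing up gives $\mathrm{Ad}_{W_f}(\I(X,U))=\I(Y,V)$.

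The main obstacle, and the only genuinely delicate point, is the asymmetry of $f$: a coarse equivalence need not be injective or surjective, so $V_f$ is built not from $f$ itself but from a choice function splitting the fibres of $f$, and one must be careful that the resulting map is well-defined, an isometry, and still "close to $f$" in the coarse sense, and that the non-canonical choices do not affect whether a support set, after closure in the Stone–Čech boundary, lands in an invariant open set. Once one observes that modifying $W_f$ by an operator of finite propagation and controlled support changes $\overline{r(\supp_\varepsilon(\cdot))}$ only within a bounded neighbourhood — and invariant open sets are, by definition, unions $\bigcup_R\overline{B(Y,R)}$ and hence stable under such enlargements — the choices become irrelevant, and the functoriality-type statement for $g$ composed with $f$ closes the argument that $\mathrm{Ad}_{W_f}$ is onto (with inverse $\mathrm{Ad}_{W_g}$ up to conjugation by a unitary in the multiplier algebra).
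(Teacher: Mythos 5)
Your overall outline is the same as the paper's: build a covering isometry/unitary $W_f$ from $f$ using bounded geometry, estimate how $\mathrm{Ad}_{W_f}$ moves $\varepsilon$-supports, and then use the hypothesis that $V$ is the invariant open set generated by $f(U)$ to pass to closures in the Stone--\v{C}ech boundary. However, there is a genuine gap in your construction of $W_f$. You propose to build an isometry $V_f\colon\ell^2(X)\to\ell^2(Y)$ and set $W_f=V_f\otimes\mathrm{id}_\H$. This cannot work when $f$ is not injective: if $f(x_1)=f(x_2)=z$, a covering isometry must send $\delta_{x_1}\otimes v$ and $\delta_{x_2}\otimes v$ to orthogonal vectors supported near $z$, and for a pure tensor $V_f\otimes\mathrm{id}_\H$ this would force $V_f\delta_{x_1}\perp V_f\delta_{x_2}$ with both supported in a bounded Borel piece around $z$ that may contain fewer points than the fibre $A_z=f^{-1}(z)$ (in the extreme case $Y=\{z\}$ and $|A_z|\geq 2$, no such isometry exists at all). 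This is precisely why one must work directly on $\H_X=\ell^2(X)\otimes\H$: the paper chooses, for each $z\in f(X)$, a unitary $W_z\colon\ell^2(A_z)\otimes\H\to\ell^2(B_z)\otimes\H$ (resp.\ onto $\delta_z\otimes\H$), letting the infinite-dimensional auxiliary factor $\H$ absorb the multiplicity of the fibre, and then sets $W_f=\bigoplus_z W_z$. Such $W_z$ are not of the form $(\text{something})\otimes\mathrm{id}_\H$. Relatedly, your phrase ``partitioning $Y$ into the fibres $f^{-1}(y)$'' has the direction reversed; the fibres partition $X$.

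Apart from this, the support-tracking argument is conceptually right but left at the level of ``one shows.'' The quantitative step the paper supplies, and which your write-up should too, is the bound $\|Ad_{W_f}(T)_{yy'}\|\leq N\cdot\sup\{\|T_{xx'}\|: x\in A_{z_y},\,x'\in A_{z_{y'}}\}$ where $N$ bounds the fibre sizes; this is what turns $\supp_\varepsilon(Ad_{W_f}(T))$ into a bounded enlargement of $f(\supp_{\varepsilon/N}(T))$ and hence, after closure and using invariance, lands it in $V$. Your remark about unbounded choices being harmless (because invariant open sets are stable under bounded enlargement) is correct and is exactly the reason the closure argument goes through.
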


\begin{proof}
By \cite[Proposition 4.3.4 \& Proposition 4.3.5]{HIT2020}, one can find a covering isometry $W_f: \H_X\to\H_Y$ for $f$. For discrete metric spaces with bounded geometry, the construction of $W_f$ will be easier. We shall briefly recall the process here. 

Write $Z=f(X)$. We can take $A_z=f^{-1}(z)$. Choose a unitary $W_z: \ell^2(A_z)\ox\H \to\delta_z\ox\H$. Then
$$W_f=\oplus_{z\in Z}W_z:\bigoplus_{z\in Z}\ell^2(A_z)\ox\H=\H_X\to \ell^2(Z)\ox\H\subseteq\H_Y$$
is a covering isometry. To find a unitary, we can take a uniformly bounded Borel cover $\{B_z\}_{z\in Z}$ of $Y$ such that $z\in B_z$ for all $z\in Z$. Then $\{f^{-1}(B_z)=f^{-1}(z)\}_{z\in Z}$ is also a uniformly bounded Borel cover of $X$. Write $A_z=f^{-1}(z)$ for simplicity. For any $z\in Z$, take an arbitrary unitary $W_z: \ell^2(A_z)\ox\H \to\ell^2(B_z)\ox\H$. Then
$$W_f=\oplus_{z\in Z}W_z:\bigoplus_{z\in Z}\ell^2(A_z)\ox\H=\H_X\to\bigoplus_{z\in Z}\ell^2(B_z)\ox\H=\H_Y$$
is exactly the covering unitary we need.

For the geometric ideal, it suffices to prove $Ad_{W_f}(T)\in\II[Y, V]$ for any $T\in \II[X,U]$. By the bounded geometry condition of $X$, there exists $N>0$ such that $\#(f^{-1}(y))\leq N$. Fix $\varepsilon>0$. Let $A\in M_N(\K)$ be an $N$-by-$N$ matrix all of whose entries have norm smaller than $\varepsilon$. Then the norm of $A$ is smaller than $\varepsilon \cdot N$. For any $y\in Y$, we shall write $z_y\in Z$ such that $y\in B_{z_y}$. From the construction of $W_f$, we have that
$$\|Ad_{W_f}(T)_{yy'}\|\leq N\cdot \sup\{\|T_{xx'}\|\mid x\in A_{z_y},x'\in A_{z_{y'}}\}.$$
Thus, if $\|Ad_{W_f}(T)_{yy'}\|\geq\varepsilon$, there exists at least one pair $(x,x')\in A_{z_y}\times A_{z_{y'}}$ such that $\|T_{xx'}\|\geq \varepsilon/N$. Denote $R=\sup_{z\in Z}\diam(A_z)$. We then conclude that
$$f^{-1}({r(\supp_{\varepsilon}(Ad_{W_f}(T)))})\subseteq B(r(\supp_{\varepsilon/N}(T)),R).$$
By the definition of an invariant subset, we have $\overline{B(r(\supp_{\varepsilon/N}(T)),R)}\subseteq U$ since $\overline{r(\supp_{\varepsilon/N}(T))}\subseteq U$.
As a result, $\overline{r(\supp_{\varepsilon}(Ad_{W_f}(T)))}\subseteq f(U)\subseteq V$. We conclude that $Ad_{W_f}(T)\in\II[Y, V]$. For the ghostly ideal, the proof is parallel, thus omitted.
\end{proof}

\begin{Cor}\label{cor: K-coarse invariant}
If $f: X\to Y$ is a coarse equivalence between the pair $(X,U)$ and $(Y,V)$, then
$$(Ad_{W_f})_*: K_*(\I(X, U))\to K_*(\I(Y,V))\quad\text{and}\quad (Ad_{W_f})_*: K_*(\G(X, U))\to K_*(\G(Y,V))$$
does not depend on the choice of $W_f$ that covers $f$. As a result, $(Ad_{W_f})_*$ is always an isomorphism.
\end{Cor}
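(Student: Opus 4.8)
The plan is to show that any two covering isometries for the same coarse equivalence $f$ induce the same map on $K$-theory, by producing an explicit homotopy (or a path of unitaries) connecting them, and then to upgrade this into a two-sided inverse using a coarse inverse of $f$. First I would take two covering isometries $W_f, W_f' : \H_X \to \H_Y$ for $f$; by the previous proposition both implement $*$-homomorphisms $\mathrm{Ad}_{W_f}, \mathrm{Ad}_{W_f'}$ on the geometric (resp. ghostly) ideal. The key observation is that the composition $V = (W_f')^* W_f$ is a unitary on $\H_X$ of finite propagation: indeed, by the construction recalled in the proposition, both $W_f$ and $W_f'$ move each $\delta_x \otimes \H$ into $\ell^2(B_{f(x)}) \otimes \H$ for a uniformly bounded cover $\{B_z\}$, so $V$ is supported in a single entourage $\Delta_R$ with $R = \sup_z \operatorname{diam}(B_z) + (\text{control of }f)$. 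Moreover $V$ lies in the multiplier algebra of $\I(X,U)$ (and of $\G(X,U)$) and normalizes these ideals. Hence $\mathrm{Ad}_{W_f'} = \mathrm{Ad}_{W_f'} \circ \mathrm{Ad}_{V^*} \circ \mathrm{Ad}_V = \mathrm{Ad}_{W_f} \circ \mathrm{Ad}_V$, so it suffices to show $(\mathrm{Ad}_V)_* = \mathrm{id}$ on $K$-theory whenever $V$ is a finite-propagation unitary multiplier of the ideal.

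For the latter, I would use the standard Eilenberg swindle / doubling trick: replace $\H_X$ by $\H_X \otimes \ell^2(\IZ)$ (which does not change the Roe algebra or its ideals up to canonical isomorphism, since $\H$ is infinite-dimensional separable), and observe that $V \oplus 1 \oplus 1 \oplus \cdots$ is connected to the identity through finite-propagation unitaries by a rotation trick, exactly as in the proof that inner automorphisms by finite-propagation unitaries act trivially on the $K$-theory of Roe algebras (see e.g. \cite[Section 6.2]{HIT2020}). The rotation homotopy stays within finite-propagation operators and within the multiplier algebra of the ideal, so it induces a homotopy of $*$-homomorphisms on $\I(X,U)$ and on $\G(X,U)$; hence $(\mathrm{Ad}_V)_* = \mathrm{id}$. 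Combined with the previous paragraph this shows $(\mathrm{Ad}_{W_f})_*$ is independent of the choice of covering isometry.

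Finally, to see that $(\mathrm{Ad}_{W_f})_*$ is an isomorphism, I would pick a coarse inverse $g : Y \to X$ of $f$ (which is then a coarse equivalence between $(Y,V)$ and $(X,U)$, since the pairing conditions are symmetric under coarse equivalence), and a covering isometry $W_g$ for $g$. Then $W_g W_f$ is a finite-propagation isometry covering $g \circ f$, which is close to $\mathrm{id}_X$; a standard argument shows $W_g W_f$ can be taken to be a finite-propagation unitary, so by the independence statement just proved $(\mathrm{Ad}_{W_g})_* \circ (\mathrm{Ad}_{W_f})_* = (\mathrm{Ad}_{W_g W_f})_* = (\mathrm{Ad}_{\mathrm{covering\ id}})_* = \mathrm{id}$, and symmetrically on the other side. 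Therefore $(\mathrm{Ad}_{W_f})_*$ is an isomorphism, and the same argument applies verbatim with $\I$ replaced by $\G$.

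The main obstacle I anticipate is bookkeeping rather than conceptual: one must check carefully that every unitary and every homotopy produced in the swindle genuinely lies in (or normalizes) the ideal $\I(X,U)$ — i.e. preserves the support condition $\overline{r(\supp_\varepsilon(\cdot))}\subseteq U$ — and that passing to $\H_X\otimes\ell^2(\IZ)$ really is harmless at the level of the ideals and not just the ambient Roe algebra. Both points follow from the finite-propagation control and the invariance of $U$, exactly as in the proof of the proposition, but they need to be stated rather than left implicit.
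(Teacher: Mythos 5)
There is a genuine gap at the first key step. You set $V = (W_f')^*W_f$ and assert this is a unitary on $\H_X$. That is only true if $W_f$ and $W_f'$ have the same range. In general a covering isometry is \emph{not} surjective (the paper's proposition explicitly distinguishes isometries from the unitary case, giving two separate constructions), and if $W_f, W_f'$ are isometries with different ranges then $V^*V = W_f^* (W_f'W_f'^{\,*}) W_f \neq 1$. Without unitarity, the Eilenberg swindle / rotation argument in your second paragraph does not apply to $V$, so the independence claim is not established. Your final paragraph (using a coarse inverse $g$ and $W_gW_f$) is fine as a way to get two-sidedness, but it relies on the independence result that the earlier step failed to prove.

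The standard and correct fix for this difficulty is exactly what the paper does: instead of trying to relate the two isometries by a single unitary on $\H_X$, pass to $M_2$ and conjugate $\alpha_1 : T \mapsto \begin{pmatrix} W_1 T W_1^* & 0 \\ 0 & 0 \end{pmatrix}$ into $\alpha_2 : T \mapsto \begin{pmatrix} 0 & 0 \\ 0 & W_2 T W_2^* \end{pmatrix}$ by the Halmos-type unitary $U = \begin{pmatrix} 1 - W_1 W_1^* & W_1 W_2^* \\ W_2 W_1^* & 1 - W_2 W_2^* \end{pmatrix}$, which is a genuine unitary multiplier of $M_2(\I(Y,V))$ (resp. $M_2(\G(Y,V))$) whether or not $W_1, W_2$ are surjective. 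Inner conjugation is trivial on $K$-theory, so $\alpha_1$ and $\alpha_2$ agree on $K$-theory, giving independence. The paper then obtains the isomorphism statement more directly than you do: since $W_f$ can be chosen to be a unitary, $\mathrm{Ad}_{W_f}$ is a $*$-isomorphism for that choice, and by the independence just shown, every choice gives the same $K$-theory map, hence an isomorphism. Your coarse-inverse argument would also yield this once independence is secured, but it is an extra detour.
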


\begin{proof}
For any two covering isometries $W_1,W_2$ for $f$, it suffices to prove that the $*$-homomorphisms from $\I(X,U)$ to $M_2(\I(Y,V))$ defined by
$$\alpha_1:T\mapsto\begin{pmatrix}W_1TW_1^*&0\\0&0\end{pmatrix}\quad\text{and}\quad\alpha_2:T\mapsto\begin{pmatrix}0&0\\0&W_2TW_2^*\end{pmatrix}$$
agree on the level of $K$-theory. Take the unitary
$$U=\begin{pmatrix}1-V_1V_1^*&V_1V_2^*\\V_2V_1^*&1-V_2V_2^*\end{pmatrix}.$$
Conjugation by $U$ induces the identity map on $K$-theory, and one can easily check that $U\alpha_1(T)U^*=\alpha_2(T)$. This completes the proof. 

Moreover, $W_f$ can be chosen to be a unitary, in which case $Ad_{W_f}$ is a $*$-isomorphism. This proves that $(Ad_{W_f})_*$ is always an isomorphism.
\end{proof}

\subsection{Reduction to sparse spaces}

In this subsection, we shall reduce \Cref{thm: main theorem} to the case of sparse spaces. 

By \cite[Section 4]{CWY2013}, one can find an $\omega$-excisive cover $\{Y,Z\}$ of $X$ such that $Y$, $Z$, and $Y\cap Z$ are all \emph{sparse}, i.e., they are all coarse disjoint unions of finite spaces. From the definition of Stone-\v{C}ech compactification, we conclude that $\beta X = \beta Y \cup \beta Z$. Then the canonical inclusion $\iota: Y\to X$ induces a canonical inclusion $\iota: G(Y)\to G(X)$. Actually, one can easily check the identification $G(Y)=G(X)|^{\beta Y}_{\beta Y}$.

If $U\subseteq \beta X$ is an invariant open subset, then it follows from the definition that $U^{\beta Y}=U\cap \beta Y$ and $U^{\beta Z}=U\cap\beta Z$ are $G(Y)$-invariant and $G(Z)$-invariant, respectively. In this case, we shall denote $\I(Y,U^{\beta Y})$ and ${\G}(Y,U^{\beta Y})$ the corresponding geometric and ghostly ideal of $C^*(Y)$ with respect to $U^{\beta Y}$. We have the following observation.

\begin{Lem}\label{lem: ideal on subalgebra is still an ideal}
With notations as above, we have that
$$\I(Y, U^{\beta Y})=\I(X,U)\cap C^*(Y)\quad\text{and}\quad \G(Y, U^{\beta Y})=\G(X,U)\cap C^*(Y).$$
\end{Lem}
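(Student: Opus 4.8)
The plan is to prove each equality via its two inclusions: for the ghostly ideal both directions are formal, and for the geometric ideal the one nontrivial step is a compression argument. I will use two standing identifications already recorded above. First, $\beta Y$ is the closure $\overline{Y}$ of $Y$ in $\beta X$, so for any subset $A\subseteq Y$ the closure $\overline{A}$ computed in $\beta Y$ coincides with the one computed in $\beta X$, and $U^{\beta Y}=U\cap\beta Y$. Second, $C^*(Y)$ is the corner $\chi_Y C^*(X)\chi_Y$ of $C^*(X)$, where $\chi_Y\in\B(\H_X)$ is the orthogonal projection onto $\H_Y=\ell^2(Y)\ox\H$; concretely, $T\in C^*(X)$ lies in $C^*(Y)$ exactly when $T=\chi_Y T\chi_Y$, and for any $S\in\IC[X]$ one has $\chi_Y S\chi_Y\in\IC[Y]$, since $\chi_Y S\chi_Y$ has support $\supp(S)\cap(Y\times Y)$ and its nonzero entries are those of $S$ restricted to $Y\times Y$.

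For the ghostly ideal I would argue as follows. If $T\in\G(Y,U^{\beta Y})$, then $T\in C^*(X)$ and $\overline{r(\supp_\varepsilon(T))}\subseteq U^{\beta Y}\subseteq U$ for every $\varepsilon>0$, so $T\in\G(X,U)\cap C^*(Y)$. Conversely, if $T\in\G(X,U)\cap C^*(Y)$, then $\supp_\varepsilon(T)\subseteq Y\times Y$, so $r(\supp_\varepsilon(T))\subseteq Y$ and hence $\overline{r(\supp_\varepsilon(T))}\subseteq\beta Y$; together with $\overline{r(\supp_\varepsilon(T))}\subseteq U$ this gives $\overline{r(\supp_\varepsilon(T))}\subseteq U\cap\beta Y=U^{\beta Y}$, so $T\in\G(Y,U^{\beta Y})$.

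For the geometric ideal I would use that $\I(X,U)=\overline{\II[X,U]}$ and $\I(Y,U^{\beta Y})=\overline{\II[Y,U^{\beta Y}]}$. The inclusion ``$\subseteq$'' runs as above: any $T\in\II[Y,U^{\beta Y}]$ lies in $\IC[Y]\subseteq\IC[X]$ and satisfies $\overline{r(\supp_\varepsilon(T))}\subseteq U^{\beta Y}\subseteq U$, hence $T\in\II[X,U]\cap C^*(Y)\subseteq\I(X,U)\cap C^*(Y)$, and the last set is closed. For ``$\supseteq$'', given $T\in\I(X,U)\cap C^*(Y)$, I would choose $T_n\in\II[X,U]$ with $\|T_n-T\|\to0$; since $T=\chi_Y T\chi_Y$, we get $\chi_Y T_n\chi_Y\to T$ in norm, while $\chi_Y T_n\chi_Y\in\IC[Y]$ with $\supp_\varepsilon(\chi_Y T_n\chi_Y)=\supp_\varepsilon(T_n)\cap(Y\times Y)$, so
$$\overline{r(\supp_\varepsilon(\chi_Y T_n\chi_Y))}\subseteq\overline{r(\supp_\varepsilon(T_n))\cap Y}\subseteq\overline{r(\supp_\varepsilon(T_n))}\cap\beta Y\subseteq U\cap\beta Y=U^{\beta Y}.$$
Hence $\chi_Y T_n\chi_Y\in\II[Y,U^{\beta Y}]$, and therefore $T=\lim_n\chi_Y T_n\chi_Y\in\I(Y,U^{\beta Y})$.

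The whole argument is essentially bookkeeping once the two identifications $\beta Y=\overline{Y}$ and $C^*(Y)=\chi_Y C^*(X)\chi_Y$ are in place; the one point I would emphasize is the compression observation that turns a generator of $\I(X,U)$ into a generator of $\I(Y,U^{\beta Y})$ after cutting by $\chi_Y$, which is exactly what makes the ``generate an ideal, then complete'' definition of the geometric ideal commute with the intersection against $C^*(Y)$. I do not expect a genuine obstacle; the argument uses nothing about the sparse cover $\{Y,Z\}$ and works for an arbitrary subset $Y\subseteq X$.
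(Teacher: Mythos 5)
Your argument is correct, and the core mechanism is the same as the paper's: the identifications $\beta Y=\overline{Y}\subseteq\beta X$, $C^*(Y)=\chi_YC^*(X)\chi_Y$, and the observation that for $T$ supported on $Y\times Y$ one has $\overline{r(\supp_\varepsilon(T))}\subseteq\beta Y$, so that the $\varepsilon$-support lands in $U^{\beta Y}=U\cap\beta Y$ precisely when it lands in $U$. For the ghostly ideal your two-inclusion argument is exactly what the paper does (and omits). For the geometric ideal, however, you supply a step the paper glosses over. The paper reduces to the dense-level equality $\II[Y,U^{\beta Y}]=\II[X,U]\cap\IC[Y]$ and then invokes density, but density of $A_0\subseteq A$, $B_0\subseteq B$, $C_0\subseteq C$ together with $A_0=B_0\cap C_0$ only gives $A\subseteq B\cap C$ in general, not equality. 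Your use of the norm-one idempotent $\Phi(T)=\chi_YT\chi_Y$ — which fixes $C^*(Y)$ pointwise, carries $\II[X,U]$ into $\II[Y,U^{\beta Y}]$ (via $\supp_\varepsilon(\chi_YT\chi_Y)=\supp_\varepsilon(T)\cap(Y\times Y)$), and is norm-continuous — is exactly the ingredient that makes the density argument close up. So your proposal is not a different route, but it is a tightened version of the paper's proof, with the one genuinely nontrivial inference made explicit.
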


\begin{proof}
For the geometric ideal, it suffices to show
$$\II[Y, U^{\beta Y}]=\II[X,U]\cap \IC[Y]$$
since $\II[Y, U^{\beta Y}]$, $\II[X,U]$, $\IC[Y]$ are respectively dense in $\I(Y, U^{\beta Y})$, $\I(X,U)$, $C^*(Y)$. For any $T\in\II[Y, U^{\beta Y}]$, it is direct to see that $r(\supp_{\varepsilon}(T))\subseteq Y\cap U$ from definition. Thus, $T\in \II[X,U]$. On the other hand, if $T\in\IC[Y]$ satisfies that $\overline{r(\supp_{\varepsilon}(T))}\subseteq \beta Y\cap U$, then $r(\supp_{\varepsilon}(T))\subseteq Y\cap U$. This shows that $T\in \II[X,U]$. For the ghostly ideal, the proof is parallel, omitted.
\end{proof}

Write $V_Y$ the $G(X)$-invariant open subset in $\beta X$ generated by $Y$. Notice that $\I(X, V_Y)$ is isomorphic to the ideal of $C^*(X)$ generated by $Y$, denoted by $C^*(X,Y)$ as in \cite{HRY1993}. Thus, one has that
$$\I(X,V_Y)=\lim_{r\to\infty}C^*(B(Y_R)).$$
As a result, there exist canonical inclusions
$$\I(Y,U^{\beta Y})\hookrightarrow C^*(Y)\hookrightarrow \I(X,V_Y).$$

\begin{Lem}\label{lem: ideal and subalgebra cap I}
With notation as above, the canonical inclusion map $\iota: C^*(Y)\cap \I \to\I(X, V_Y)\cap \I$ induces an isomorphism in $K$-theory
$$\iota_*: K_*(C^*(Y)\cap \I)\xrightarrow{\cong} K_*(\I(X, V_Y)\cap \I),$$
where $\I=\I(X,U)$ or $\I=\G(X,U)$.
\end{Lem}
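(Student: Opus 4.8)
The key point is that $Y = \bigsqcup_i Y_i$ is sparse, so $C^*(Y)$ decomposes as a "block" algebra over the coarse components $Y_i$, and likewise $\I(X, V_Y) = \lim_{R\to\infty} C^*(B(Y_R))$ is an inductive limit of block algebras over the coarse components of the neighbourhoods $B(Y_R) = \bigsqcup_i B(Y_i, R)$ (for $R$ small relative to the separation, which grows). Intersecting with the fixed ideal $\I$ (either $\I(X,U)$ or $\G(X,U)$) respects this block structure, because the defining support condition $\overline{r(\supp_\varepsilon(T))} \subseteq U$ is checked component-by-component. So the plan is: (1) exhibit $C^*(Y) \cap \I$ and $\I(X,V_Y) \cap \I$ as (inductive limits of) $c_0$-direct sums over $i$ of the corresponding ideals in $C^*(Y_i)$ and $C^*(B(Y_i,R))$ respectively; (2) reduce to a uniform statement about each finite piece; (3) use continuity of $K$-theory under inductive limits and its compatibility with $c_0$-sums to conclude.

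**Step-by-step.** First I would fix the $\omega$-excisive decomposition and write $Y = \bigsqcup_{i\in\IN} Y_i$ with $\mathrm{sep}(Y_i, Y_j) \to \infty$ and $\diam(Y_i) < \infty$; then for each $R$, once $i,j$ are large enough the balls $B(Y_i, R)$ and $B(Y_j, R)$ are also separated, so $B(Y_R)$ is again sparse with the same indexing (after absorbing finitely many small-index components into one bounded piece, which changes nothing up to the $c_0$-sum picture). Second, I would observe that the standard identification $C^*(Y) \cong \bigoplus_{i}^{c_0} C^*(Y_i)$ (in the sense of the $C^*$-completion of the algebraic direct sum — finite-propagation operators on a sparse space are block-diagonal) intertwines $\I$ with $\bigoplus_i^{c_0}(\I \cap C^*(Y_i))$: indeed an operator $T \in C^*(Y)$ lies in $\G(X,U)$ iff each block $T_i$ has $\overline{r(\supp_\varepsilon(T_i))} \subseteq U \cap \beta Y_i$, uniformly; since each $Y_i$ is bounded, $U \cap \beta Y_i$ is either all of $\beta Y_i$ or empty (invariance plus boundedness), so this ideal condition just selects the subfamily of indices $i$ with $Y_i \subseteq U$, i.e.\ $\I \cap C^*(Y) = \bigoplus_{i: Y_i\subseteq U}^{c_0} C^*(Y_i)$. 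The same reasoning applied to $\I(X,V_Y)\cap\I = \lim_R \big(C^*(B(Y_R)) \cap \I\big) = \lim_R \bigoplus_{i}^{c_0}\big(C^*(B(Y_i,R)) \cap \I\big)$ shows it equals $\lim_R \bigoplus_{i:\, B(Y_i,R)\cap U \neq \emptyset}^{c_0} C^*(B(Y_i,R))$, with the caveat that the index set now may include more $i$, but for each fixed $i$ the inclusion $Y_i \hookrightarrow B(Y_i,R)$ is a coarse equivalence of finite spaces and $Y_i \subseteq U \iff B(Y_i,R)\cap U \neq \emptyset$ eventually. Third, having matched both sides as $c_0$-sums (resp. inductive limits of $c_0$-sums) over the \emph{same} eventual index set, the inclusion $\iota$ is componentwise the inclusion $C^*(Y_i) \hookrightarrow C^*(B(Y_i,R))$; since $C^*(\cdot)$ of a single finite metric space is just $\K(\H_{Y_i})$ up to stabilization and the inclusion is a stable-isomorphism (covering isometry for the coarse equivalence $Y_i \simeq B(Y_i,R)$, cf.\ \Cref{cor: K-coarse invariant}), each component map is a $K$-theory isomorphism, uniformly in $i$. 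Finally, $K_*$ commutes with $c_0$-direct sums and with inductive limits, so $\iota_*$ is an isomorphism.

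**Main obstacle.** The genuine work is the bookkeeping in Step 2: making precise the claim that the fixed ideal $\I$ (which is defined via closures of $\varepsilon$-supports in $\beta X$, an object living over all of $X$ at once) actually restricts to a plain $c_0$-direct sum of the local pieces, and that the index set stabilizes compatibly on both sides of $\iota$. This requires care because $\G(X,U)\cap C^*(Y)$ involves ghost-type operators: one must check that a ghost supported on a sparse set with the relevant $\varepsilon$-support condition really is a block-diagonal $c_0$-sum of operators each living on a single bounded component — this follows because finite propagation forces block-diagonality and the $c_0$-decay of a ghost matches the $c_0$-sum norm, but the argument that $\G(Y,U^{\beta Y})$ itself already \emph{equals} $\bigoplus^{c_0}$ of compacts on the in-components (using boundedness of each $Y_i$ to collapse the ghost condition to a compactness condition) should be stated carefully, perhaps as a preliminary lemma; modulo that, everything else is soft and follows from continuity of $K$-theory together with \Cref{cor: K-coarse invariant} and \Cref{lem: ideal on subalgebra is still an ideal}.
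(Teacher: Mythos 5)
Your proposal is not only a different route from the paper's proof but it rests on a decomposition that is simply false, and the failure is fatal.

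The paper's own argument is very short and does not use the sparse decomposition of $Y$ at all: one writes $\I(X,V_Y)\cap\I = \lim_{R\to\infty}\bigl(C^*(Y_R)\cap\I\bigr)$ and notes that each structure map $C^*(Y_R)\cap\I \hookrightarrow C^*(Y_{R'})\cap\I$ is implemented by a covering isometry for the coarse equivalence $Y_R\hookrightarrow Y_{R'}$, hence is a $K$-theory isomorphism by \Cref{cor: K-coarse invariant} together with \Cref{lem: ideal on subalgebra is still an ideal}; continuity of $K$-theory under inductive limits finishes it. No blockwise bookkeeping is needed.

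The genuine gap in your argument is the claimed identification $C^*(Y)\cong\bigoplus_i^{c_0}C^*(Y_i)$ and, worse, $\G(X,U)\cap C^*(Y)=\bigoplus_{i:\,Y_i\subseteq U}^{c_0}C^*(Y_i)$. Neither holds. For a sparse space $Y=\bigsqcup_i Y_i$, finite propagation does force block-diagonality, so $\IC[Y]\subseteq\prod_i C^*(Y_i)$, but the completion is a $C^*$-subalgebra of the \emph{direct product}, not the $c_0$-sum: there are finite-propagation locally compact operators with block norms bounded away from zero (e.g.\ $\bigoplus_i \chi_{Y_i}\otimes p$ for a fixed rank-one $p$). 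More seriously, the ghostly ideal is precisely \emph{not} a $c_0$-sum. A ghost has $\|T_{xy}\|\to 0$ entrywise, but this does \emph{not} force the block norms $\|T_i\|=\|\chi_{Y_i}T\chi_{Y_i}\|$ to tend to $0$; the non-compact ghost projections on expander graphs (which are sparse) are exactly of this kind, with $\|T_i\|=1$ for all $i$ even though every entry decays. Your "main obstacle" paragraph gestures at this, but then asserts that the ghost condition "collapses to a compactness condition" on a sparse space — that assertion is the one that is false, and it is false for exactly the reason ghost projections are interesting. If your identification were correct, the ghostly ideal $\G(Y,Y)$ would always equal the compacts, and the paper's main theorem (and indeed the entire subject of ghost projections) would be vacuous without any coarse-embeddability hypothesis. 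The softer route via the inductive limit and coarse invariance avoids having to say anything about the block structure of the ghostly ideal, which is exactly why it works unconditionally.
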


\begin{proof}
As we discussed above, the inclusion map can be induced by the inductive limit of the inclusions as $R$ tends to infinity
$$\iota_{RR'}: C^*(Y_R)\cap\I\to C^*(Y_{R'})\cap\I.$$
For any $R,R'$, the canonical inclusion $\iota: (Y_R, U^{\beta Y_R})\to (Y_{R'}, U^{\beta Y_{R'}})$ is a coarse equivalence, since $Y_R$ is a net of $Y_{R'}$. By \Cref{cor: K-coarse invariant} and \Cref{lem: ideal on subalgebra is still an ideal}, it induces an isomorphism on $K$-theory, i.e.,
$$(\iota_{RR'})_*: K_*(C^*(Y_R)\cap\I)\xrightarrow{\cong} K_*(C^*(Y_{R'})\cap\I)$$
is an isomorphism for any $R,R'\geq 0$. As a result, the inclusion $\iota$ induces an isomorphism on $K$-theory, i.e.,
$$\iota_*: K_*(C^*(Y)\cap \I)\xrightarrow{\cong} K_*(\I(X, V_Y)\cap \I).$$
This completes the proof.
\end{proof}

\begin{Pro}\label{pro: reduction to sparse space}
Let $X$ be a metric space with bounded geometry. 
Fix a $G(X)$-invariant open subset $U\subseteq \beta X$. For any subspace $Y\subseteq X$, denote $U^{\beta Y}=\beta Y\cap U$. If the inclusion $i: \I(Y, U^{\beta Y})\to\G(Y, U^{\beta Y})$ induces an isomorphism in $K$-theory for any sparse subspaces $Y\subseteq X$,
then the inclusion $i:\I(X,U)\to\G(X,U)$ induces an isomorphism in $K$-theory, i.e.,
$$i_*: K_*(\I(X,U))\to K_*(\G(X,U))$$
is an isomorphism.
\end{Pro}

\begin{proof}
Take $\{Y,Z\}$ to be an $\omega$-excisive cover of $X$ such that $Y,Z$ and $Y\cap Z$ are all sparse. Since $Y\cup Z=X$, we have $V_Y\cup V_Z=\beta X$ by the definition of the Stone-\v{C}ech compactification, where $V_Y$ and $V_Z$ are $G(X)$-invariant open subsets generated by $Y$ and $Z$, respectively. It is proved in \cite{HRY1993} that
$$\I(X,V_Y)+\I(X,V_Z)=C^*(X)\quad\text{and}\quad \I(X,V_Y)\cap\I(X,V_Z)=\I(X,V_{Y\cap Z}).$$
The second equation follows from the fact that $\{Y, Z\}$ is $\omega$-excisive. It is direct to see that $V_Y\cap V_Z=V_{Y\cap Z}$.

For an invariant open subset $U \subseteq \beta X$, let $\I$ denote either the geometric ideal $\I(X,U)$ or the ghostly ideal $\G(X,U)$. By taking intersections with $\I$ in the pushout diagram of the geometric and ghostly ideals mentioned above, we obtain the following pushout diagram:
\[\begin{tikzcd}
\I(X,U_{Y\cap Z})\cap \I \arrow[d] \arrow[r] & \I(X,U_{Z})\cap \I \arrow[d] \\
\I(X,U_{Y})\cap \I \arrow[r]           & \I        
\end{tikzcd}\]
By the Five Lemma, it suffices to prove the canonical inclusion map
$$i:\I(X,U_{W})\cap \I(X,U)\to \I(X,U_{W})\cap \G(X,U)$$
induces an isomorphism on $K$-theory for $W=Y,Z$ and $Y\cap Z$ which are all sparse. By \Cref{lem: ideal and subalgebra cap I}, we conclude that $i_*$ is identified with
$$i_*: K_*(C^*(W)\cap \I(X,U))\to K_*(C^*(W)\cap \G(X,U)).$$
By \Cref{lem: ideal on subalgebra is still an ideal}, the map $i_*$ is further identified with
$$i_*: K_*(\I(W,U^{\beta W}))\to K_*(\G(W,U^{\beta W})).$$
Therefore, it suffices to prove $i_*$ is an isomorphism for all sparse subspaces of $X$, as the general case will then follow. This completes the proof of the proposition.
\end{proof}

\begin{Rem}
For ghostly ideals, we actually have the parallel pushout diagram, i.e.,
$$\G(X,V_Y)+\G(X,V_Z)=C^*(X)\quad\text{and}\quad \G(X,V_Y)\cap\G(X,V_Z)=\G(X,V_{Y\cap Z}).$$
Since $\I(X,V_Y)\subseteq \G(X,V_Y)$, the first equation follows from that $\I(X,V_Y)+\I(X,V_Z)=C^*(X)$. For the second one, take an element $T\in C^*(X)$ and an invariant open subset $V\subseteq \beta X$. Define a function $f_T: X\times X\to\IR$ by
$$f_T(x,y)=\|T_{xy}\|.$$
By definition, $f_T$ admits an extension to a function in $C_0(G(X))$. The reasoning is that $T$ can be approximated by finite propagation operators. Since any such operator is supported in a set of the form $\overline{\Delta_R}$, its associated function lies in $C_c(G(X))$. It is direct from the definition that $T\in\G(X, V)$ if and only if $r(\supp(f_T))\subseteq V$. As a result, $T\in \G(X,V_Y)\cap\G(X,V_Z)$ if and only if
$$r(\supp(f_T))\subseteq V_Y\cap V_Z=V_{Y\cap Z}.$$
This proves the pushout diagram of ghostly ideals.
\end{Rem}

\section{The $\ell^p$-Bott-Dirac construction and twisted ghostly ideals}\label{sec: proof}

In this section, we prove \Cref{thm: main theorem}. The argument proceeds in the following three steps.

\subsection{Step 1. On the Bott-Dirac operator for $\ell^p$-spaces}

In the following subsections, we shall prove \Cref{thm: main theorem} in several steps. 
First of all, we shall briefly recall the definition of the Bott-Dirac operator in finite-dimensional $\ell^p$-spaces. Please refer to \cite{WXYZ2024} for a detailed introduction.

Let $E=\IC^n$ be an $n$-dimensional vector space with a canonical basis $\{e_1,\cdots,e_n\}$. For any $p\geq 1$, we can equip it with the $\ell^p$-norm associated with the given basis $\|\cdot\|_p$. Denote by $S_p(E)$ the sphere of $E$ under the norm $\|\cdot \|_p$. Consider the Mazur map
$$\psi: S_p(E)\to S_2(E),\quad \sum_{i=1}^na_ie_i\mapsto \sum_{i=1}^na_i|a_i|^{p/2-1}e_i.$$
We can extend the Mazur map homogeneously to
$$\Psi: (E,\|\cdot\|_p)\to (E,\|\cdot\|_2),\quad v\mapsto \left\{\begin{aligned}&\|v\|_p\psi\left(\frac{v}{\|v\|_2}\right),&&\text{if }v\ne 0\\&0,&&\text{if }v=0\end{aligned}\right..$$

Denote by $\Cl(E)$ the Clifford algebra associated with $(E,\|\cdot\|_2)$. It is easy to verify that if $E$ is an $n$-dimensional space, then $\Cl(E)$ is a vector space of dimension $2^n$. Given an orthonormal basis $\{e_1, \dots, e_n\}$ of $E$, the set
\[\{ e_{i_1} e_{i_2} \cdots e_{i_k} \mid 1 \le i_1 < i_2 < \cdots < i_k \le n,\ 0 \le k \le n \},\]
forms a basis for $\Cl(E)$. This basis then defines a Hilbert space structure on $\Cl(E)$, which we denote by $\mathfrak{h}_E$. Moreover, $\Cl(E)$ acts canonically on $\mathfrak{h}_E$ by left multiplication.

For any $v\in (E,\|\cdot\|_p)$, we define the Bott generator to be the unbounded function
$$C_v: (E,\|\cdot\|_p)\to\Cl(E),\quad w\mapsto \Psi(w-v)$$
Define the Dirac operator $D$ to be
$$D=\sum_{i=1}^n\widehat{e_i}\frac{\partial}{\partial x_i},$$ 
where $\wh{v}:\Cl(E)\to\Cl(E)$ is defined to be $\wh{v}(w)=(-1)^{deg(w)}wv$ for any $v\in E$ and $w\in\Cl(E)$. Here, $D$ is viewed as an unbounded operator on $L^2(E,\mathfrak{h}_E)$

For any $s\geq 1$, we define the \emph{Bott-Dirac operator} to be
$$B_{s,v}=s^{-1}D+C_v$$
We summarize the facts we will need about the Bott-Drac operator in the next result. A detailed proof for the proposition can be found in \cite[Lemma 2.7]{WXYZ2024}:

\begin{Pro}[\cite{WXYZ2024}]\label{pro: Fredholm index of Bott-Dirac}
For any $s > 1$ and $v \in E$, the operator $B_{s, v}$ is essentially self-adjoint on $\mathfrak{h}_E$ and admits a compact resolvent. Furthermore, it is a Fredholm operator with index $1$.
\end{Pro}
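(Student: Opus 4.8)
The plan is to argue essentially as in \cite[Lemma 2.7]{WXYZ2024}, and since that reference already gives the details I would only sketch the three points. Throughout I would work on the common core $\mathcal S$ of Schwartz-class $\mathfrak h_E$-valued functions on $(E,\|\cdot\|_2)$, on which $B_{s,v}$ is visibly symmetric: Clifford multiplication by the vector $\Psi(w-v)\in E\subseteq\Cl(E)$ is pointwise self-adjoint (for the chosen Clifford convention $u^2=\|u\|_2^2$, left multiplication by a vector is a self-adjoint operator on $\mathfrak h_E$), and $s^{-1}D$ is symmetric on $\mathcal S$. Note also that $C_v^2$ is then multiplication by the scalar function $w\mapsto\|\Psi(w-v)\|_2^2=\|w-v\|_p^2$.

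For essential self-adjointness and the compact resolvent I would introduce the comparison operator $N=s^{-2}D^2+C_v^{\,2}+1$. Expanding the square gives $B_{s,v}^2=s^{-2}D^2+C_v^{\,2}+s^{-1}\{D,C_v\}$, where the anticommutator $\{D,C_v\}$ is Clifford multiplication against the distributional derivative of $w\mapsto\Psi(w-v)$; since $\Psi$ is Lipschitz with constant $L$, this is an $L^\infty$-field of norm at most $L$, so $\{D,C_v\}$ extends to a bounded operator. Hence $\|B_{s,v}\xi\|^2\le C\langle N\xi,\xi\rangle$ on $\mathcal S$, and one checks that the commutator form $\langle[B_{s,v},N]\xi,\xi\rangle$ is dominated by $\langle N\xi,\xi\rangle$; Nelson's commutator theorem then gives essential self-adjointness of $B_{s,v}$ on $\mathcal S$ (indeed on any core of $N$). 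Because $\|w-v\|_p^2\to\infty$ as $w\to\infty$, the Schrödinger-type operator $s^{-2}D^2+C_v^{\,2}$, hence $N$, has compact resolvent, and absorbing the bounded cross term yields $B_{s,v}^2+C'\ge\tfrac12 N$, so that $\mathrm{dom}(\overline{B_{s,v}})\subseteq\mathrm{dom}(N^{1/2})$ with compact inclusion; thus $\overline{B_{s,v}}$ has compact resolvent, its spectrum is discrete, and it is Fredholm.

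For the index I would use its homotopy invariance and deform twice. First, $t\mapsto B_{s,tv}$ is norm-resolvent continuous, since $C_{tv}-C_{t'v}$ is the \emph{bounded} multiplication operator by $\Psi(w-tv)-\Psi(w-t'v)$, of norm $\le L|t-t'|\,\|v\|_p$ by the Lipschitz bound; so the index equals that of $B_{s,0}$. Second, interpolating the extended Mazur map $\Psi=\Psi_p$ with the identity $\Psi_2$ through the family $\Psi_q$ with $q$ running from $p$ down to $2$ (each $\Psi_q$ Lipschitz with a uniform constant and proper) gives a norm-resolvent continuous path of Fredholm operators, reducing to $p=2$. For $p=2$ and $v=0$ the operator $B_{s,0}=s^{-1}D+C$ is, up to rescaling, the classical Bott–Dirac operator on $\IR^n$ with values in $\Cl(E)$; it is odd for the $\IZ$-graded-by-parity structure of $\mathfrak h_E=\Cl(E)$, so its index is $\dim(\ker\overline{B_{s,0}}\cap\mathfrak h_E^{(0)})-\dim(\ker\overline{B_{s,0}}\cap\mathfrak h_E^{(1)})$, and a direct computation (using $s^{-1}D(e^{-s\|w\|_2^2/2}\cdot 1)=-\sum_i w_i\,\wh{e_i}(1)\,e^{-s\|w\|_2^2/2}=-C(e^{-s\|w\|_2^2/2}\cdot 1)$ and the harmonic-oscillator form of $B_{s,0}^2$) shows the kernel is spanned by the Gaussian $w\mapsto e^{-s\|w\|_2^2/2}\cdot 1$ with the even unit $1\in\Cl(E)$, with no odd-degree kernel vector, so the index is $1$.

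The main obstacle is the essential self-adjointness when $p>2$: the Mazur map is only Lipschitz — not $C^1$, with singularities along the coordinate hyperplanes — so $C_v$ cannot be treated as a smooth potential, and the argument must be routed through the boundedness of the anticommutator $\{D,C_v\}$ and Nelson's commutator theorem rather than through a Kato–Rellich perturbation of the $p=2$ operator.
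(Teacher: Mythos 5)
The paper does not give its own proof of this proposition; it is recorded as a recall and the reader is referred to \cite[Lemma~2.7]{WXYZ2024}, so I can only evaluate your sketch on its own terms. The overall route you take — essential self-adjointness and compact resolvent via a Schr\"odinger comparison operator $N=s^{-2}D^2+C_v^2+1$ together with a form-commutator (Nelson) estimate, then index via two homotopies down to the classical Bott--Dirac operator at $p=2$, $v=0$ — is the expected one and matches the spirit of the cited lemma. Your first homotopy $t\mapsto B_{s,tv}$ is correctly justified by the Lipschitz bound, since $C_{tv}-C_{t'v}$ really is a bounded multiplication operator; and the $p=2$, $v=0$ kernel computation (the Gaussian $e^{-s\|w\|_2^2/2}\cdot 1$, with no odd-degree kernel) is fine.

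The gap is in the second homotopy, from $\Psi_p$ to $\Psi_2$ through the family $\Psi_q$. You assert this gives a norm-resolvent continuous path because each $\Psi_q$ is Lipschitz and proper, but those two facts only give the Fredholmness of each individual $B^{(q)}_{s,0}$; they do not control the transition. Unlike $C_{tv}-C_{t'v}$, the difference $C^{(q)}_0-C^{(q')}_0$ is multiplication by $\Psi_q(w)-\Psi_{q'}(w)$, which is positively $1$-homogeneous and in general nonzero, hence grows linearly at infinity and is \emph{not} a bounded operator (e.g.\ on $w=(t,t,0,\dots)$ the difference has norm comparable to $|2^{1/q-1/2}-2^{1/q'-1/2}|\,t$). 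So the bounded-perturbation argument that worked for the first homotopy does not apply here. To salvage the step one must show, say, that $C^{(q)}_0-C^{(q')}_0$ is $B^{(q')}_{s,0}$-bounded with relative bound tending to $0$ as $q\to q'$ (using the homogeneity to reduce to the unit sphere, where $\Psi_q$ depends continuously on $q$), and then conclude norm-resolvent continuity on sufficiently short subintervals; alternatively, one can work directly with the bounded transforms $F^{(q)}$ and prove norm continuity in $q$ via the resolvent identity plus that relative-bound estimate. As written, the homotopy invariance of the index is asserted without an argument that actually covers the unbounded change of potential, and this is the one step that genuinely needs repair. Minor: the grading you invoke is $\IZ/2$ (parity), not $\IZ$, and the dimensional constant hidden in the bound $\|\{D,C_v\}\|\le L$ should be acknowledged, though neither affects the structure of the argument.
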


In \cite{WXYZ2024}, the identity map $f(v)=v$ used in the construction of the Clifford operator in \cite[Section 12.1]{HIT2020} is replaced by the extended Mazur map. Consequently, the Bott-Dirac operator remains well-behaved. This is primarily due to the fact that $\Psi$ is a Lipschitz map, see \cite[Lemma 2.6]{WXYZ2024}.

\begin{Lem}\label{lem: Mazur is Lip}
Assume $p\geq  2$. The extended Mazur map $\Psi: (E,\|\cdot\|_p)\to (E,\|\cdot\|_2)$ is Lipschitz, i.e.,
$$\|\Psi(v) -\Psi(u)\|_2 \leq L\|v-u\|_p,$$
for any $u,v\in (E, \|\cdot\|_p)$, where $L = 1+p\cdot 2^p$.
\end{Lem}

It is worth noting that \Cref{lem: Mazur is Lip} is crucial for the spectral analysis of $B_{s,v}$. This lemma is also essential for controlling the support of the \emph{bounded Bott-Dirac operator} $F_{s,v}$, which we define below and will use frequently throughout the paper.

\begin{Def}\label{def: bounded Bott-Dirac operator}
For any $v\in E$ and $s>1$, we define the \emph{bounded Bott-Dirac operator} to be
$$F_{s,v}:=B_{s,v}(1+B^2_{s,v})^{-\frac 12}.$$
\end{Def}

Notice that $F_{s,v}$ is a bounded operator on $L^2(E,\mathfrak{h}_E)$. There is a natural action of $C_0(E)$ on $L^2(E, \mathfrak{h}_E)$, given by pointwise multiplication. Consequently, for any $p \in [1, \infty)$, we can discuss the propagation of $F_{s,v}$ in the $E$-direction with respect to the norm $\|\cdot\|_p$. We shall denote this number by $\prop_{E,p}(F_{s,v})$.

For any $v\in E$ and $R>0$, we denote $\chi_{v,R}:=\chi_{B(v,R)}$ for simplicity.  The detailed properties of $F_{s,v}$ are summarized in \cite[Proposition 12.1.10]{HIT2020} for the case when $p=2$ and \cite[Proposition 2.17]{WXYZ2024} for the case when $p\geq 2$. Here we simplify this proposition as follows.

\begin{Pro}[\cite{HIT2020,WXYZ2024}]\label{pro: summary on Bott-Dirac}
For each $\varepsilon>0$, there exists an odd function $\Phi:\mathbb{R}\to[-1,1]$ with $\lim_{t\to\pm\infty}\Phi(t)=\pm1$, satisfying the following properties:
\begin{enumerate}
\item[(1)] For all $s\in [1,\infty)$ and $v\in E$, $\|F_{s,v}-\Phi(B_{s,v})\|<\varepsilon$. Moreover, there exists $R_0>0$ such that $\prop_{E,p}(\Phi(B_{s,v}))\leq s^{-1}R_0$ for all $s, v$ and $p\ge 2$.
\item[(2)] For all $s\in[1,\infty)$, the operators $\Phi(B_{s,v})^2-1$ and $\Phi(B_{s,v})-\Phi(B_{s,w})$ are compact. Furthermore, the map $v \mapsto \Phi(B_{s,v})$ is Lipschitz continuous with respect to the $p$-norm on $E$, i.e.,
$$\|\Phi(B_{s,v})-\Phi(B_{s,w})\|\leq c\|v-w\|_p.$$
\item[(3)] The map $s\mapsto\Phi(B_{s,v})$ is strongly continuous. Moreover, for any $r>0$, the families
$$\{s\mapsto \Phi(B_{s,v})^2-1\}_{v\in E}\quad\text{ and }\quad \{s\mapsto \Phi(B_{s,v})-\Phi(B_{s,w})\}_{\|v-w\|_p\leq r}$$ are norm equi-continuous on compact subsets of $[1,\infty)$.
\item[(4)] For any $r>0$, there exists $R_{\varepsilon,r}>0$ (independent of $\dim E$) such that for large $R\ge R_{\varepsilon,r}$, $v,w\in E$ with $\|v-w\|_p\leq r$ and $s\in[L\cdot(\operatorname{dim} E_n)^2,\infty)$, the operators satisfy the following norm estimates:
$$\|(\Phi(B_{s,v})^2-1)(1-\chi_{v,R})\|<\varepsilon \quad \text{and} \quad \|(\Phi(B_{s,v})-\Phi(B_{s,w}))(1-\chi_{v,R})\|<\varepsilon.$$
\end{enumerate}
\end{Pro}

\subsection{Step 2. Construction of the twisted Roe algebra}\label{sec: step 2 in section 3}

The first step ensures that, without loss of generality, we may assume $X=\bigsqcup_{n=0}^\infty X_n$ is a sparse space that admits a coarse embedding into $\ell^p$. By \cite[Lemma 12.5.4]{HIT2020}, we can assume the distance between $X_n$ and $X_m$ is infinite whenever $n\ne m$ (called \emph{separated disjoint union of $\{X_n\}_{n\in\IN}$}). In this case, the Hilbert space $\H_X$ has a canonical decomposition $\H_X=\bigoplus_{n\in\IN}\H_{X_n}$. The Roe algebra $C^*(X)$ is therefore a subalgebra of $\prod_{n\in\IN}C^*(X_n)$

The coarse disjoint union of $\{X_n\}_{n\in\IN}$ admits a coarse embedding into an $\ell^p$-space if and only if each $X_n$ admits a coarse embedding into the $\ell^p$-space with respect to the same controlling functions $\rho_+, \rho_-:\IR_+\to\IR_+$. Let $f_n: X_n\to \ell^p$ be a coarse embedding. Then $\mathrm{span}\{f_n(X_n)\}$ forms a finite-dimensional subspace of $\ell^p$, which must be coarsely equivalent to a finite-dimensional $\ell^p$ space. Without loss of generality, we may assume the image of $f_n$ is a finite-dimensional linear space $(E_n,\|\cdot\|_p)$. Set $d_n=\dim(E_n)$. We shall write $E$ to be the collection of all $\{E_n\}$.

Define
$$\H_{X_n,E_n}=\H_{X_n}\ox L^2(E_n,\mathfrak{h}_{E_n})\quad\text{and}\quad \H_{X,E}=\bigoplus_{n\in\IN}\H_{X_n,E_n}.$$
Note that we define the Roe algebra $C^*(X, \H_{X, E})$ with respect to $\H_{X,E}$. We will consider $C^*(X_n, \H_{X_n})$ as represented on $\H_{X_n,E_n}=\H_{X_n}\ox L^2(E_n,\mathfrak{h}_{E_n})$ via the amplified representation $T\mapsto T\ox 1$. Thus $C^*(X, \H_X)$ can be viewed as a subalgebra of the multiplier algebra of $C^*(X,\H_{X, E})$. 

We define $E$ as the disjoint union $\bigsqcup_{n\in\IN} E_n$, (where $C_0(E)$ denotes the $C^*$-algebra of continuous functions on $E$ vanishing at infinity). Thus, $\H_{X,E}$ admits natural actions of $C_0(X)=\bigoplus_{n\in\IN}C_0(X_n)$ and $C_0(E)=\bigoplus_{n\in\IN}C_0(E_n)$.
Hence, for an operator $T$ on this space, we can discuss its propagation in the $X$-direction and its propagation in the $E$-direction, denoted by $\prop_X(T)$ and $\prop_{E,p}(T)$, respectively. Explicitly, an operator $T$ has propagation in the $E$-direction bounded by $R \geq 0$ if for any $f, g \in C_0(E)$ supported on sets separated by a $p$-distance greater than $R$, we have $fTg = 0$.

For any $v\in E_n$, we set $\chi^{(n)}_{R,p,v}$ to be the characteristic function of the ball $B(v,R)$ in $(E_n,\|\cdot\|_p)$. Consider the function $\chi^{(n)}_{R,p}: X\mapsto B(L^2(E_n,\mathfrak{h}_{E_n}))$ defined by
$$\chi^{(n)}_{R,p}(x)=\chi^{(n)}_{R,p,f_n(x)}.$$
Notice that each $\chi^{(n)}_{R,p,v}$ can be viewed as a multiplication operator on $L^2(E_n,\mathfrak{h}_{E_n})$. Thus, $\chi^{(n)}_{R,p}$ can be viewed as a bounded operator on $\H_{X_n,E_n}$.

Now, we are ready to define the twisted algebras:

\begin{Def}\label{twisted Roe algebra}
Let $\prod_{n\in\IN}C_b([1,\infty),C^*(X_n, \H_{X_n,E_n}))$ be the direct product $C^*$-algebra of all bounded continuous functions from $[1,\infty)$ to $C^*(X_n, \H_{X_n,E_n})$. Write elements of this algebra as collection $T=(T_{s,n})_{n\in\IN,s\in[1,\infty)}$. Let $\IA[X,E]$ be the $*$-subalgebra of $\prod_{n\in\IN}C_b([1,\infty),C^*(X_n, \H_{X_n,E_n}))$ consisting of elements satisfying the following conditions
\begin{itemize}
\item[(1)] $\sup\limits_{s\in[1,\infty),n\in\IN}\prop_{X_n}(T_{s,n})<\infty$;
\item[(2)] $\lim\limits_{s\to\infty}\sup\limits_{n\in\IN}\prop_{E_n,p}(T_{s,n})=0$;
\item[(3)] $\lim\limits_{R\to\infty}\sup\limits_{s\in[1,\infty),n\in\IN}\|\chi^{(n)}_{R,p}T_{s,n}-T_{s,n}\|=\lim\limits_{R\to\infty}\sup\limits_{s\in[1,\infty),n\in\IN}\|T_{s,n}\chi^{(n)}_{R,p}-T_{s,n}\|=0$.
\end{itemize}
Define the twisted Roe algebra $A(X,E)$ to be the closure of $\IA[X,E]$ with respect to the norm induced from $\prod_{n\in\IN}C_b([1,\infty),C^*(X_n, \H_{X_n,E_n}))$.
\end{Def}

Next, we will use the bounded Bott-Dirac map from \Cref{def: bounded Bott-Dirac operator} to construct the index map:
$$\ind:K_*(C^*(X,\H_X))\to K_*(A(X,E)).$$
For each $s\in[1,\infty)$, $F_{s,v,E_n}:L^2(E_n,\mathfrak{h}_{E_n})\to L^2(E_n,\mathfrak{h}_{E_n})$ is the bounded operator as in \Cref{def: bounded Bott-Dirac operator}. Set $ s_n := L(\operatorname{dim} E_n)^2$, where $L$ is the Lipschtiz constant in \Cref{lem: Mazur is Lip}.

\begin{Def}\label{def: multiplier F}
Let $F:[1,\infty)\to\B(\H_{X,E})$ be a function
$$s\mapsto F_s=(F_{s,1},\cdots,F_{s,n},\cdots)\in\prod_{n\in\IN}\B(\H_{X_n,E_n})\subseteq\B(\H_{X,E}),$$
where $F_{s,n}\in \B(\H_{X_n,E_n})$ is defined to be
$$(F_{s,n})_{xy}=\left\{\begin{aligned}&F_{s+s_n,f_n(x),E_n},&&\text{if }x=y;\\&0,&&\text{if }x\ne y. \end{aligned}\right.$$
Here, we view each element in $\B(\H_{X_n, E_n})$ as an $X_n$-by-$X_n$ matrix, where each entry of the matrix is a bounded linear operator on $L^2(E_n,\mathfrak{h}_{E_n})$.

Analogously, if $\Phi$ has the properties in \Cref{pro: summary on Bott-Dirac} for some $\varepsilon>0$, we write $F^{\Phi}$, $F^{\Phi}_s$ and $F^{\Phi}_{s,n}$ for the operators built in the same way as $F$, $F_s$ and $F_{s,n}$, but replacing $F_{s,v,E_n}$ with $\Phi(B_{s,v,E_n})$.
\end{Def}

By \Cref{pro: summary on Bott-Dirac}, the function $F$ is strong $*$-continuous with respect to $s \in [1, \infty)$. To introduce the index map, we also need the following lemma. The proof of the lemma can be found in \cite[Proposition 2.19]{WXYZ2024} and \cite[Section 12.3]{HIT2020}. We will omit some details here.

\begin{Lem}\label{lem: index class}\begin{itemize}
\item[(1)] The function $F$ defined above is an odd, self-adjoint, contractive operator in the multiplier algebra of $A(X,E)$.
\item[(2)] The Roe algebra $C^*(X,H_X)$ is a $C^*$-subalgebra of the multiplier algebra of $A(X,E)$.
\item[(3)] For any $T\in C^*(X, H_X)$ and $s\in[1,\infty)$, the function $s\mapsto [T,F_s]$ and $s\mapsto T(F^2_s-1)$ is in $A(X,E)$.
\item[(4)] For any $P\in C^*(X,H_X)$ a projection, the function
$$s\mapsto (PF_sP)^2-P$$
is in the corner $PA(X,E)P$.\qed
\end{itemize}\end{Lem}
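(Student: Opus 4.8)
### Proof proposal for Lemma \ref{lem: index class}

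The plan is to verify the four assertions by tracking propagation and localisation conditions (1)--(3) of \Cref{twisted Roe algebra} one at a time, making systematic use of \Cref{pro: summary on Bott-Dirac}. The key observation is that all the analytic content is already contained in that proposition; what remains is bookkeeping. First, for (1), I would note that oddness, self-adjointness, and contractivity of each $F_{s+s_n, f_n(x), E_n} = B_{s+s_n, v}(1+B^2_{s+s_n,v})^{-1/2}$ are immediate from functional calculus applied to the essentially self-adjoint operator $B_{s,v}$ (\Cref{pro: Fredholm index of Bott-Dirac}); assembling these into $F_s$ as a block-diagonal family preserves these properties. To see $F$ is a multiplier of $A(X,E)$, I would check that $F\cdot T$ and $T\cdot F$ lie in $A(X,E)$ for $T\in\IA[X,E]$: since $F_{s,n}$ is diagonal in the $X_n$-variable it does not increase $\prop_{X_n}$, so condition (1) survives; by \Cref{pro: summary on Bott-Dirac}(1)--(2), $F^{\Phi}_{s,n}$ has $E_n$-propagation at most $(s+s_n)^{-1}R_0\to 0$ uniformly in $n$ as $s\to\infty$, and $\|F_{s,n}-F^{\Phi}_{s,n}\|<\varepsilon$ uniformly, so condition (2) is met after an $\varepsilon$-approximation; and condition (3) for the product follows because $T$ already satisfies (3) and $F$ is a bounded multiplier (the localisers $\chi^{(n)}_{R,p}$ almost commute with $F$ up to the $E_n$-propagation estimate). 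Assertion (2) is essentially the remark preceding the definition: $C^*(X,\H_X)$ acts by $T\mapsto T\otimes 1$, which is diagonal in the $E$-variable, hence has zero $E$-propagation and commutes with every $\chi^{(n)}_{R,p}$, so left/right multiplication by such $T$ preserves all three conditions.

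For (3), the point is that the commutator $[T,F_s]$ and the "defect" $T(F^2_s-1)$ gain the localisation that $T$ alone lacks. Fix $T\in\IC[X,\H_X]$ of finite propagation (the general case follows by density and the fact that $A(X,E)$ is closed). For the $E_n$-propagation of $[T, F_{s,n}]$: replacing $F$ by $F^{\Phi}$ costs $\varepsilon$, and $[T\otimes 1, F^{\Phi}_{s,n}]$ has $E_n$-propagation bounded by that of $F^{\Phi}_{s,n}$, namely $(s+s_n)^{-1}R_0$, which $\to 0$ uniformly in $n$ as $s\to\infty$ — this is condition (2). For condition (3), I would use \Cref{pro: summary on Bott-Dirac}(9): since $T$ has finite propagation $r$ in $X$, the matrix entry $(F^{\Phi}_{s,n})_{xx}=\Phi(B_{s+s_n, f_n(x)})$ and $(F^{\Phi}_{s,n})_{yy}=\Phi(B_{s+s_n, f_n(y)})$ with $\|f_n(x)-f_n(y)\|_p\leq\rho_+(r)$ are compared via part (10), giving $\|(\Phi(B_{s,v})-\Phi(B_{s,w}))(1-\chi_{v,R})\|<3\varepsilon$ for $R\geq R_2$; a parallel argument using (9) handles $T(F^2_s-1)$. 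Summing the block estimates over $n$ and using that $R_1, R_2$ are dimension-independent yields conditions (2) and (3) for the whole family; condition (1) is automatic since $[T,F_s]$ and $T(F_s^2-1)$ have $X$-propagation bounded by $\prop_X(T) + \prop_X(F_s) = \prop_X(T)$. Strong-$*$ continuity in $s$ comes from parts (6)--(8).

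Finally, (4) follows from (2) and (3): writing $(PF_sP)^2 - P = PF_s(F_s P - P F_s)P + P(F_s^2-1)P$, the first term lies in $PA(X,E)P$ by (3) applied to the commutator $[P,F_s]$ (note $PF_s[P,F_s]P$ absorbs into $A(X,E)$ since $P$ is a multiplier by (2) and $[P,F_s]\in A(X,E)$), and the second is $P\big(T(F_s^2-1)\big)P$ with $T=P$, which is in $A(X,E)$ by (3); multiplying on both sides by the multiplier $P$ lands in the corner. I expect the main obstacle to be the uniformity in $n$: one must check carefully that all the $\varepsilon$-estimates from \Cref{pro: summary on Bott-Dirac} hold with constants ($R_0, R_1, R_2, c, L$) independent of $\dim E_n$ — which the proposition explicitly guarantees — and that the shift $s\mapsto s+s_n$ with $s_n = L(\dim E_n)^2$ pushes each block into the regime $s+s_n \geq L(\dim E_n)^2$ where parts (9) and (10) apply. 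Keeping the block-diagonal structure and the dimension-independence of the thresholds in sight is what makes the sup over $n$ behave.
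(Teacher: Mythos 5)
Your argument is essentially the one outlined in \cite{WXYZ2024} and \cite{HIT2020} that the paper defers to: functional calculus for oddness/self-adjointness/contractivity, the $E$-diagonal action for membership of $C^*(X,\H_X)$ in the multiplier algebra, and then \Cref{pro: summary on Bott-Dirac} parts (1)--(2) and (7)--(10) combined with dimension-independence of $R_0,R_1,R_2$ and the shift $s\mapsto s+s_n$ to verify conditions (1)--(3) of \Cref{twisted Roe algebra} for $[T,F_s]$ and $T(F_s^2-1)$, first for finite-propagation $T$ and then by density. One small algebraic slip in part (4): the correct identity is $(PF_sP)^2-P = PF_s(PF_s-F_sP)P + P(F_s^2-1)P = PF_s[P,F_s]P + P(F_s^2-1)P$, not with $F_sP-PF_s$ as written (the sign flips $[P,F_s]$, and one can check the displayed identity would force $PF_s^2P=(PF_sP)^2$); this is immaterial to the conclusion since $\pm[P,F_s]\in A(X,E)$ either way, and indeed your subsequent prose already refers to $PF_s[P,F_s]P$ with the correct sign.
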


It should be noted that in \Cref{def: multiplier F}, the parameter $s$ is shifted to the right by $s_n$ (depending on the dimension) to ensure the validity of the preceding \Cref{lem: index class} applying (4) in \Cref{pro: summary on Bott-Dirac}.

By \cite[Corollary 2.7.4]{HIT2020}, a $K_0$-theory element of $C^*(X)$ can be represented by a single projection $[P]$.
For any projection $P\in C^*(X,\H_X)$, $PF_sP$ is an odd, self-adjoint operator on the graded Hilbert space $L^2[1,\infty)\ox \H_{X, E}$. We can build an index class of $K_0(PA(X, E)P)$ by using $PF_sP$ and \Cref{lem: index class}. We can decompose $PF_sP$ as a matrix with respect to the grading on each $\H_{X,E}$ and $PF_sP$ has the form
$$PF_sP=\begin{pmatrix}0&u^*\\u&0 \end{pmatrix}$$
where $P$ is even and has the form
$$P=\begin{pmatrix}P_0&0\\0&P_1 \end{pmatrix}.$$
We define the index class of $PF_sP$ in $K_0(PA(X,E)P)$, denoted by $\ind(PF_sP)$, to be
$$\ind(PF_sP)=\begin{bmatrix}(P_0-u^*u)^2&u^*(P_1-uu^*)\\u(2P_0-u^*u)(P_0-u^*u)&uu^*(P_1-uu^*) \end{bmatrix}-\begin{bmatrix}0&0\\0&P_1 \end{bmatrix}$$
We define the inclusion homomorphism: $i_P:PA(X,E)P\to A(X,E)$. It induces a group homomorphism $(i_P)_*:K_*(PA(X,E)P)\to K_*(A(X,E))$ on the level of $K$-theory.

Define the index map
$$\ind:K_0(C^*(X, \H_X))\to K_0(A(X,E))$$
by the following formula:
$$[P]\mapsto (i_P)_*(\ind(PF_sP)).$$
By using a suspension argument, we can define the index map on the level of $K_1$. Similarly, we can also define the localization index map for both $K_0$ and $K_1$ by applying the above construction pointwise in $t$.

\begin{Def}\label{index map}
Define the index map as we constructed above:
$$\ind:K_*(C^*(X, \H_X))\to K_*(A(X,E)).$$
\end{Def}

The index map is a well-defined group homomorphism. For each $s\in[1,\infty)$, define $\iota^s:A(X,E)\to C^*(X, H_{X,E})$ to be the evaluation map $(T_s)\mapsto T_s$. The following proposition is proved in \cite[Proposition 2.20]{WXYZ2024}, also see \cite[Section 12.3]{HIT2020}. We shall provide a detailed proof in \Cref{pro: dirac dual dirac}.

\begin{Pro}\label{pro: full DDD}
For any $s\in [1,\infty)$, the composition
$$K_*(C^*(X, \H_X))\stackrel{ind}{\longrightarrow}K_*(A(X,E))\stackrel{\iota^s_*}{\longrightarrow}K_*(C^*(X, \H_{X,E}))$$
is an isomorphism.\qed
\end{Pro}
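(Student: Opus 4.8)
The plan is to recognise $\iota^s_*\circ\ind$ as a coarse-geometric incarnation of finite-dimensional Bott periodicity and to invert it by an explicit dual-Dirac (Clifford-multiplication) map, following \cite[Section 12.3]{HIT2020} and \cite[Proposition 2.20]{WXYZ2024}; the full execution is what \Cref{pro: dirac dual dirac} will carry out. Fix $s\in[1,\infty)$ once and for all and set $\beta:=\iota^s_*\circ\ind\colon K_*(C^*(X,\H_X))\to K_*(C^*(X,\H_{X,E}))$. By \Cref{lem: index class}, $\beta$ is well defined, and on $K_0$ it sends a projection $[P]$ to the index class of the operator $(PF_{s,n}P)_n\in C^*(X,\H_{X,E})$ — a Fredholm-type perturbation of a projection by \Cref{lem: index class}(4) — with $K_1$ handled by a suspension argument. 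The task is to prove that $\beta$ is an isomorphism.

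First I would examine the fibre-wise picture. After Step~1 we may assume $X=\bigsqcup_n X_n$ is a separated disjoint union of finite metric spaces with coarse embeddings $f_n\colon X_n\to(E_n,\|\cdot\|_p)$, so $C^*(X,\H_X)$ and $C^*(X,\H_{X,E})$ are built from their fibres over $n$, with $C^*(X_n,\H_{X_n})$ and $C^*(X_n,\H_{X_n,E_n})$ each isomorphic to the algebra of compact operators. On the $n$-th fibre, $\beta$ is the map on $K$-theory induced by the Bott--Dirac operator $F_{s+s_n,f_n(\cdot),E_n}$, which is Fredholm of index $1$ by \Cref{pro: Fredholm index of Bott-Dirac}; since an index-$1$ Fredholm operator induces an isomorphism on the $K$-theory of the compacts, $\beta$ is an isomorphism on each fibre. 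A two-sided inverse on the $n$-th fibre is furnished by the dual-Dirac map $\alpha_n$ built from the Bott generator $w\mapsto\Psi(w-f_n(x))$, i.e.\ Clifford multiplication on $\mathfrak h_{E_n}$; the composites $\alpha_n\circ\beta_n$ and $\beta_n\circ\alpha_n$ are connected to the identity by the two standard homotopies of finite-dimensional Bott periodicity — a straight-line homotopy on the one hand and an Atiyah-type rotation homotopy on the other.

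The main obstacle is to assemble these fibre-wise inverses and homotopies into honest maps on $A(X,E)$ and $C^*(X,\H_{X,E})$, \emph{uniformly in} $n$. These algebras are cut out by conditions that are uniform in $n$ — bounded $X$-propagation uniformly in $(s,n)$, $E$-propagation tending to $0$ uniformly in $n$, and the cutoff condition (3) of \Cref{twisted Roe algebra} — so one cannot merely take the direct sum of the fibre-wise statements: the operators $\alpha_n$ and all the homotopy operators must have propagations bounded by constants independent of $n$, in particular independent of $d_n=\dim E_n$. This is precisely what the dimension-dependent shift $s_n=L(\dim E_n)^2$ (with $L$ the Lipschitz constant of the Mazur map) and the dimension-free estimates (9), (10) of \Cref{pro: summary on Bott-Dirac} — together with the remark there that $R_1,R_2$ are dimension-independent — are engineered to provide: they let the finite-dimensional Bott-periodicity homotopies run with all propagations controlled uniformly across $n$, and at each stage one invokes \Cref{lem: index class}(2)--(4) to verify that the resulting operator-valued functions of $s$ actually lie in $A(X,E)$ (respectively $C^*(X,\H_{X,E})$). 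Once this uniform bookkeeping is in place, $\alpha=(\alpha_n)_n$ assembles into a $K$-theory map that is a two-sided inverse of $\beta$, proving the proposition. I expect this uniform-control step, rather than any of the $K$-theoretic input (which is classical), to be the crux of the argument, and it is this that \Cref{pro: dirac dual dirac} will spell out in full.
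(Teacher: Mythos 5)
Your proposal and the paper agree at the top level: both defer the real work to \Cref{pro: dirac dual dirac}, both cite \cite[Proposition 2.20]{WXYZ2024} and \cite[Section 12.3]{HIT2020}, and you correctly flag the actual crux — controlling everything uniformly in $n$, for which the dimension-dependent shift $s_n=L(\dim E_n)^2$ and the dimension-free estimates (9), (10) of \Cref{pro: summary on Bott-Dirac} are the engineering tools. That much is an accurate reading of where the difficulty lives.

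Where you diverge is in the mechanism you predict \Cref{pro: dirac dual dirac} will use. You describe an explicit two-sided inverse $\alpha=(\alpha_n)_n$ (a ``dual-Dirac / Clifford-multiplication'' map) and two homotopies showing $\alpha\circ\beta\sim\mathrm{id}$ and $\beta\circ\alpha\sim\mathrm{id}$. The paper's proof never builds such an $\alpha$. Instead it introduces the retraction $\kappa(v)=v-v/\|v\|_p$ (for $\|v\|_p>1$), forms the intermediate index maps $\ind^{(k)}$ centred at $\kappa^k(f_n(x))$, and shows $\ind^{(0)}=\ind^{(\infty)}$ by assembling the whole family $\{F^{(k)}\}_{k\in\mathbb{N}\cup\{\infty\}}$ on the stabilised module $\H^\infty_{X,E}$ and cancelling the difference $b$ with a norm-continuous path $t\mapsto F^{(k+t)}$, using the Lipschitz estimate from \Cref{pro: summary on Bott-Dirac}(5). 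It then computes $\ind^{(\infty)}([P])=[P\otimes p_k]$ directly via the kernel-projection homotopy $t\mapsto P_{E_n}(t)$, repairing the failure of equi-continuity in $n$ with a stacking argument (partitioning $[0,1]$ into $N_n=[s+2s_n]+1$ pieces). So the paper does not verify that two composites are the identity — it homotopes the twisted index map to a manifest stabilisation isomorphism. Both routes are Bott-periodicity arguments and both must solve the same uniformity problem, so your plan is not wrong; but if you were to execute it you would still need an analogue of the $\kappa$-iteration and the stacking trick to make your homotopies norm-continuous uniformly over $n$, and you should be aware that the reference you are deferring to does something structurally different from a two-sided-inverse construction. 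One small caution: the phrase ``an index-$1$ Fredholm operator induces an isomorphism on the $K$-theory of the compacts'' is too loose as stated — what matters is that pairing a projection $[P]$ with the graded, self-adjoint Bott--Dirac operator produces $[P\otimes p_k]$ because the supertrace of the kernel is $1$; the Fredholm index $1$ from \Cref{pro: Fredholm index of Bott-Dirac} is the input, not by itself the conclusion.
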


\subsection{Step 3. Geometric and Ghostly ideals in the twisted setting}

In this subsection, we consider restricting the above Dirac-dual-Dirac construction to the geometric ideal and the ghostly ideal.

\begin{Def}
For any $(T_{s,n})\in A(X,E)$, we shall define the $\varepsilon$-support of $(T_{s,n})$ to be the set
$$\supp_{\varepsilon}((T_{s,n}))=\bigcup_{n\in\IN}\left\{(x,y)\in X_n\times X_n\ \Big|\ \sup_{s\in[1,\infty)}\|(T_{s,n})_{xy}\|\geq\varepsilon\right\}.$$

Fix an invariant open set $U \subseteq \beta X$. The \emph{geometric ideal} of $A(X,E)$ associated with $U$, denoted by $A_{\I(U)}(X,E)$, is defined to be the ideal generated by
$$\IA_{\I(U)}[X,E]=\left\{(T_{s,n})\in\IA[X,E]\ \big|\ \overline{r(\supp_{\varepsilon}((T_{s,n})))}\subseteq U\right\}.$$
The \emph{ghostly ideal} of $A(X,E)$ associated with $U$, denoted by $A_{\G(U)}(X,E)$, is defined to be the algebra
$$A_{\G(U)}(X,E)=\left\{(T_{s,n})\in A(X,E)\ \big|\ \overline{r(\supp_{\varepsilon}((T_{s,n})))}\subseteq U\right\}.$$
\end{Def}

Note that the distinction between the geometric ideal and the ghostly ideal in the twisted algebra here is parallel to the distinction in the original Roe algebra, which is the difference in the order of taking the $\varepsilon$-support and the completion. It is easy to show that both $A_{\I(U)}(X,E)$ and $A_{\G(U)}(X,E)$ can absorb $\IA[X,E]$, and thus they indeed give two ideals in $A(X,E)$.

\begin{Lem}\label{lem: multiplier in A}
Let $T \in \I(X,U)$ (or $T \in \G(X,U)$, respectively). Then for any $s \in [1, \infty)$, the paths $s \mapsto [T, F_s]$ and $s \mapsto T(F_s^2 - 1)$ belong to $A_{\I(U)}(X,E)$ (or $A_{\G(U)}(X,E)$, respectively).
\end{Lem}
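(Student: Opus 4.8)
The plan is to reduce this lemma to \Cref{lem: index class}(3), which already establishes that $s\mapsto [T,F_s]$ and $s\mapsto T(F_s^2-1)$ lie in $A(X,E)$, and then show that the extra support condition is automatically satisfied when $T$ has its $\varepsilon$-support (in the $X$-direction) asymptotically contained in $U$. The key observation is that $F_s$ is \emph{diagonal} in the $X$-direction: by \Cref{def: multiplier F}, $(F_{s,n})_{xy}=0$ whenever $x\ne y$. Consequently, commuting with $F_s$ or multiplying by $F_s^2-1$ cannot enlarge the $X$-support of $T$. More precisely, for $x\ne y$ one has $([T,F_s])_{xy}=T_{xy}(F_{s,n})_{yy}-(F_{s,n})_{xx}T_{xy}$, and for $x=y$, $([T,F_s])_{xx}=0$; similarly $(T(F_s^2-1))_{xy}=T_{xy}\big((F_{s,n}^2-1)_{yy}\big)$. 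In every case the $(x,y)$-entry is a product/difference of things that vanish whenever $T_{xy}=0$, and the relevant operator norms are uniformly bounded in $s$ and $n$ (since $F$ is contractive and $F_s^2-1$ has norm at most $2$).

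Next I would make the support comparison quantitative. Since $\|(F_{s,n})_{xx}\|\le 1$ uniformly, we get $\|([T,F_s])_{xy}\|\le 2\|T_{xy}\|$ for $x\ne y$, hence $\supp_\varepsilon\big((s\mapsto[T,F_s])\big)\subseteq \supp_{\varepsilon/2}(T)$ (with the $x=y$ diagonal removed, which only shrinks it further), where on the right $T$ is regarded as a block-diagonal element of $\prod_n C^*(X_n)$ via the sparse decomposition. The same estimate, with the factor $2$ replaced by the bound on $\|F_s^2-1\|\le 2$, gives $\supp_\varepsilon\big((s\mapsto T(F_s^2-1))\big)\subseteq \supp_{\varepsilon/2}(T)$. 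Taking closures of ranges under $r$ and using that $\overline{r(\supp_{\varepsilon/2}(T))}\subseteq U$ for all $\varepsilon>0$ — which holds because $T\in\I(X,U)$ or $T\in\G(X,U)$ — yields $\overline{r(\supp_\varepsilon(\cdot))}\subseteq U$, so both paths lie in $A_{\G(U)}(X,E)$.

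For the geometric-ideal case one needs slightly more care, because membership in $A_{\I(U)}(X,E)$ requires lying in the \emph{ideal generated by} $\IA_{\I(U)}[X,E]$, not merely satisfying the closed support condition. Here I would first treat $T\in\II[X,U]$ (finite propagation, $\varepsilon$-support genuinely in $U$): then $[T,F_s]$ and $T(F_s^2-1)$ have finite $X$-propagation (bounded by $\prop_X(T)$), their $E$-propagation tends to $0$ as $s\to\infty$ and their $\chi^{(n)}_{R,p}$-cutoffs converge (both inherited from the proof of \Cref{lem: index class}(3)), and by the support estimate above they lie in $\IA_{\I(U)}[X,E]$ directly. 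For general $T\in\I(X,U)$, approximate $T$ in norm by elements $T_k\in\II[X,U]$; then $[T_k,F_s]\to[T,F_s]$ and $T_k(F_s^2-1)\to T(F_s^2-1)$ in the norm of $A(X,E)$, and since $A_{\I(U)}(X,E)$ is closed the limits lie in it. The main obstacle is precisely this last point: ensuring the approximation $T_k\to T$ in $\I(X,U)$ can be taken with $T_k\in\II[X,U]$ (which is exactly the density statement recorded after \Cref{def: geom vs ghost}), and checking that the three defining conditions of \Cref{twisted Roe algebra} pass to the norm limit — but condition (1) is the only delicate one, and it survives because $\prop_{X_n}$ of $[T_k,F_s]$ is controlled by $\prop_{X_n}(T_k)$, which we do \emph{not} need to be uniform once we have already placed the limit inside the closed ideal $A_{\I(U)}(X,E)$.
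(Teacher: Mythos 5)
Your proposal is correct and follows essentially the same line as the paper's proof: you invoke \Cref{lem: index class}(3) to place the two paths in $A(X,E)$, then use the diagonality of $F_s$ in the $X$-direction together with the uniform norm bounds $\|F_s\|\le 1$, $\|F_s^2-1\|\le 1$ to show that the $\varepsilon$-support of $[T,F_s]$ and $T(F_s^2-1)$ is contained in a suitable $\varepsilon'$-support of $T$, hence $\overline{r(\supp_\varepsilon(\cdot))}\subseteq U$, and for the geometric case you conclude by density of $\II[X,U]$ in $\I(X,U)$ and closedness of $A_{\I(U)}(X,E)$. The only cosmetic difference is that your constant is $2$ rather than the paper's $1$ (which uses $0\le F_s^2\le 1$), and your closing paragraph about "checking the three conditions pass to the limit" is unnecessary once closedness of the ideal is invoked — but you recognize this yourself, so the argument stands.
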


\begin{proof}
We will only prove the case for the map $s \mapsto T(F_s^2-1)$ by using (3) in \Cref{pro: summary on Bott-Dirac}, as the argument for $[T, F_s]$ is analogous. By \Cref{lem: index class}, we already have these two functions belong to $A(X,E)$.

Note that every operator on $\mathcal{H}_{X,E}$ can be viewed as an $X$-by-$X$ matrix, where each entry determines an element in $B(L^2(E, \mathfrak{h}_E))$. Then, for any $s \in [1, \infty)$, we have
$$(T(F_s^2-1))_{xy} = T^{(n)}_{xy} \cdot (F_{s,f(x),E_n}^2 - 1),$$
where $x,y\in X_n$. Thus, we have that
$$\|(T(F_s^2-1))_{xy}\|\leq \|T^{(n)}_{xy}\|\cdot \|(F_{s,f(x),E_n}^2 - 1)\|\leq \|T^{(n)}_{xy}\|.$$
As a conclusion,
\begin{equation}\supp_{\varepsilon}(T(F_s^2-1))\subseteq\supp_{\varepsilon}(T).\label{eq: inclusion}\end{equation}

For the case of the geometric ideal, it suffices to show that for $T \in \II[X, U]$, we have $T(F_s^2-1) \in A_{\I(U)}[X, E]$. Then it is direct from the inclusion \eqref{eq: inclusion}. For the case of the ghostly ideal, taking the closure of the range of the $\varepsilon$-support on both sides of \eqref{eq: inclusion} yields $\overline{r(\supp_\varepsilon(T(F_s^2-1)))} \subseteq \overline{r(\supp_\varepsilon(T))} \subseteq U$, which shows that $T(F_s^2-1) \in A_{\G(U)}(X, E)$ for any $T \in \G(X, U)$.
\end{proof}

\begin{Cor}\label{cor: restriction on ideal}
For any projection $P\in \I(X,U)$ (or $\G(X,U)$, respectively), the function
$$s\mapsto (PF_sP)^2-P$$
is in the corner $PA_{\I(U)}(X,E)P$  (or $PA_{\G(U)}(X,E)P$, respectively).
\end{Cor}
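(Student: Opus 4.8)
The plan is to deduce Corollary~\ref{cor: restriction on ideal} directly from Lemma~\ref{lem: multiplier in A} together with the algebraic identity already used in \Cref{lem: index class}(4). First I would recall that for a projection $P$ in a $C^*$-algebra and a self-adjoint contraction $F$ in its multiplier algebra, a routine computation gives
$$(PFP)^2 - P = P F P F P - P = P(FPF - 1)P = P\big(F[P,F] + (F^2-1)\big)P = PF[P,F]P + P(F^2-1)P,$$
so $(PF_sP)^2 - P$ is a sum (and products by the multiplier $P$ and by $F_s$) of the two paths $s\mapsto [P,F_s]$ and $s\mapsto P(F_s^2-1)$. Since $P\in\I(X,U)$ (resp.\ $\G(X,U)$), \Cref{lem: multiplier in A} with $T=P$ shows that both of these paths lie in $A_{\I(U)}(X,E)$ (resp.\ $A_{\G(U)}(X,E)$).

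The second point is that these ideals are genuine two-sided ideals in $A(X,E)$ that are closed under multiplication by elements of the multiplier algebra; in particular $F$ (by \Cref{lem: index class}(1)) and $P$ (by \Cref{lem: index class}(2), since $P\in C^*(X,\H_X)$) are multipliers of $A(X,E)$, and one checks that multiplying an element of $A_{\I(U)}(X,E)$ or $A_{\G(U)}(X,E)$ by such a multiplier does not enlarge its $\varepsilon$-support, exactly as in the support estimate \eqref{eq: inclusion}. Hence $PF[P,F_s]P \in A_{\I(U)}(X,E)$ (resp.\ $A_{\G(U)}(X,E)$) as well, and therefore so is the sum $(PF_sP)^2 - P$. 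Finally, since $P$ is a multiplier of $A(X,E)$ fixing these ideals, the element actually lies in the hereditary subalgebra $PA_{\I(U)}(X,E)P$ (resp.\ $PA_{\G(U)}(X,E)P$), which is what the statement asserts.

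I do not anticipate a serious obstacle here: the content is entirely carried by \Cref{lem: multiplier in A}, and the remaining work is the bookkeeping that the two ideals of $A(X,E)$ absorb multiplication by the multipliers $P$ and $F$ without growing the $\varepsilon$-support. The only mild subtlety is making sure that for the geometric ideal one argues first on the dense level (for $T=P$ a projection one may need to pass through $\IA_{\I(U)}[X,E]$ and then take closures, just as in the proof of \Cref{lem: multiplier in A}), but this is routine and the ghostly case is immediate from the support condition.
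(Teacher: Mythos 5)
Your proposal is correct and is essentially the paper's own proof: the same algebraic identity $(PF_sP)^2 - P = PF_s[P,F_s]P + P(F_s^2-1)P$ (using $P^2=P$), followed by an appeal to \Cref{lem: multiplier in A} with $T=P$ and the observation that the ideals absorb multiplication by the multipliers $P$ and $F$. The extra bookkeeping you spell out about the support not growing under multiplication by multipliers is implicit in the paper's one-line conclusion but does not constitute a different route.
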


\begin{proof}
Notice that
\begin{equation*}\begin{split}
(PF_sP)^2-P&=P(F_sPF_s-P)P\\
&=P(F_sPF_s-F^2_sP+F^2_sP-P)P\\
&=PF_s[P,F_s]P+P(F^2_s-1)P.
\end{split}\end{equation*}
Combining this with \Cref{lem: multiplier in A}, we have that
$$s\mapsto PF_s[P,F_s]P\quad\text{ and }\quad s\mapsto P(F^2_s-1)P$$
are in $A_{\I(U)}(X,E)$ (or $A_{\G(U)}(X,E)$, respectively) if $P$ is in $\I(X,U)$ (or $\G(X,U)$, respectively).
\end{proof}

To distinguish, we denote the geometric ideal and the ghostly ideal of $C^*(X,\H_{X,E})$ as $\I_E(X,U)$ and $\G_E(X,E)$, respectively. By \Cref{cor: restriction on ideal}, we can then restrict the index map on the geometric ideal and ghostly ideal. Thus, we have the following commuting diagram
\begin{equation}\begin{tikzcd}\label{eq: main diagram}
{K_*(\I(X,U))} \arrow[d,"\ind"] \arrow[r] & {K_*(\G(X,U)} \arrow[d,"\ind"] \arrow[r] & {K_*(C^*(X,\H_X))} \arrow[d,"\ind"] \\
{K_*(A_{\I(U)}(X,E))} \arrow[d,"\iota^s_*"] \arrow[r] & {K_*(A_{\G(U)}(X,E))} \arrow[d,"\iota^s_*"] \arrow[r] & {K_*(A(X,E))} \arrow[d,"\iota^s_*"] \\
{K_*(\I_E(X,U))} \arrow[r]           & {K_*(\G_E(X,U)} \arrow[r]           & {K_*(C^*(X,\H_{X,E})}          
\end{tikzcd}\end{equation}
Analogous to \Cref{pro: full DDD}, we also have the following proposition for ideals. We should mention that the proof of \Cref{pro: dirac dual dirac} can also be applied to \Cref{pro: full DDD}.

\begin{Pro}\label{pro: dirac dual dirac}
For any $s\in [1,\infty)$, the composition
$$K_*(\I(X,U))\stackrel{\ind}{\longrightarrow}K_*(A_{\I(U)}(X,E))\stackrel{\iota^s_*}{\longrightarrow}K_*(\I_E(X,U))$$
and
$$K_*(\G(X,U))\stackrel{\ind}{\longrightarrow}K_*(A_{\G(U)}(X,E))\stackrel{\iota^s_*}{\longrightarrow}K_*(\G_E(X,U))$$
are isomorphisms.
\end{Pro}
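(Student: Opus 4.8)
\textbf{Proof proposal for Proposition \ref{pro: dirac dual dirac}.}
The plan is to follow the template of the Dirac--dual-Dirac argument that proves \Cref{pro: full DDD}, carrying it out verbatim on the subalgebras $\I(X,U)\subseteq C^*(X,\H_X)$ and $\G(X,U)\subseteq C^*(X,\H_X)$, and checking at each step that the operators produced land in the corresponding ideal rather than merely in $A(X,E)$ or $C^*(X,\H_{X,E})$. The crucial input that makes this restriction legitimate is \Cref{cor: restriction on ideal} together with \Cref{lem: multiplier in A}: these guarantee that whenever $P$ is a projection in $\I(X,U)$ (resp.\ $\G(X,U)$), the index class $\ind(PF_sP)$ actually lives in $K_0(PA_{\I(U)}(X,E)P)$ (resp.\ $K_0(PA_{\G(U)}(X,E)P)$), and that the evaluation $\iota^s$ sends it into $K_0(\I_E(X,U))$ (resp.\ $K_0(\G_E(X,U))$) since the $\varepsilon$-support only shrinks under evaluation. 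So the two vertical composites in \eqref{eq: main diagram} are well defined on the ideal columns, and one has to show each is an isomorphism.

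The core of the argument is the construction of an inverse, i.e.\ a Bott map, and a homotopy showing the two composites with the index map are the identity. Concretely, I would introduce, for a fixed $s$, the family of twisted algebras obtained by allowing the Bott-Dirac operator to be rescaled along a parameter path, exactly as in \cite[Section 12.3]{HIT2020} and \cite[Proposition 2.20]{WXYZ2024}: one forms an "asymptotic" algebra between $A(X,E)$ and $C^*(X,\H_{X,E})$, shows evaluation at $s=1$ versus evaluation at $s=\infty$ induce the same map on $K$-theory (via a continuity/homotopy argument in $s$ using properties (6), (7), (8) of \Cref{pro: summary on Bott-Dirac}), and shows that at one endpoint the index map composed with evaluation is the Bott periodicity isomorphism applied fibrewise to each finite-dimensional $E_n$ (using \Cref{pro: Fredholm index of Bott-Dirac}, that $B_{s,v}$ is Fredholm of index $1$). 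Each of these moves is support-preserving in the $X$-direction: the operations involved (commutators with $F_s$, multiplication by $F_s^2-1$, functional calculus, rescaling in the $E$-variable) never enlarge $\supp_\varepsilon$ in the $X_n\times X_n$ factor, because $F_s$ and its relatives are "diagonal in $X$" up to the controlled propagation $\prop_{E,p}$ which does not touch the $X$-coordinate. Hence every intermediate algebra has its own geometric/ghostly ideal with respect to $U$, the whole homotopy restricts, and the isomorphism statement of \Cref{pro: full DDD} descends to the ideals. The $K_1$ case follows by the usual suspension, noting that suspension commutes with taking the $U$-ideal.

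The step I expect to be the main obstacle is verifying that the ghostly ideal $\G(X,U)$ — which, unlike $\I(X,U)$, is \emph{not} generated by finite-propagation operators — is genuinely compatible with the twisting. For the geometric side, one works with the dense $*$-subalgebra $\II[X,U]$ of finite-propagation operators and everything is a direct computation on matrix entries; for the ghostly side, a general $T\in\G(X,U)$ is only a norm limit of finite-propagation operators with no control on where those approximants are supported, so one cannot simply reduce to the algebraic level. The resolution is to use the function $f_T(x,y)=\|T_{xy}\|\in C_0(G(X))$ (as in the Remark following \Cref{pro: reduction to sparse space}) and the key pointwise inequality $\|(T(F_s^2-1))_{xy}\|\le\|T_{xy}\|$ from the proof of \Cref{lem: multiplier in A}, which shows $\supp_\varepsilon$ can only shrink; combined with norm-continuity of the twisting maps on all of $C^*(X,\H_X)$, this forces the twisted operators associated to $T\in\G(X,U)$ to again have their $r$-projected supports inside $U$, i.e.\ to lie in $A_{\G(U)}(X,E)$ and $\G_E(X,U)$ respectively. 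Once this compatibility is in place, the $K$-theory isomorphism is inherited directly from the proof of \Cref{pro: full DDD} applied within the ideals, since that proof is built entirely from support-non-increasing operations and homotopies.
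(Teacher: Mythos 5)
Your proposal takes the same strategy as the paper: restrict the Dirac--dual-Dirac argument from $C^*(X,\H_X)$ to the geometric and ghostly ideals, using \Cref{lem: multiplier in A} and \Cref{cor: restriction on ideal} to guarantee that the index class lands in the ideal corner, and observing that all the operators involved ($F_s$, $F_s^2-1$, $[\,\cdot\,,F_s]$) are diagonal in the $X$-variable, hence $\varepsilon$-supports never leave $U$. Your analysis of why the ghostly side also works (via the pointwise estimate $\|(T(F_s^2-1))_{xy}\|\le\|T_{xy}\|$ rather than reduction to finite-propagation approximants) matches the paper's logic. The one place your description diverges from the paper's actual mechanism is the homotopy establishing the isomorphism: you frame it as comparing evaluation at ``$s=1$ versus $s=\infty$'', whereas the paper fixes $s$, replaces the twist $v=f_n(x)$ by $\kappa^k(f_n(x))$ for the contraction $\kappa$, shows $\ind^{(0)}=\ind^{(\infty)}$ by an Eilenberg swindle in the auxiliary algebra $\I_E^\infty(X,U)$ built on $L^2(E_n,\mathfrak{h}_{E_n})^{\oplus\infty}$, and then identifies $\ind^{(\infty)}$ with Bott periodicity via a stacking argument in the $t$-variable precisely to defeat the loss of equi-continuity caused by the shift $s\mapsto s+s_n$ as $\dim E_n\to\infty$. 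That last point is a genuine technical wrinkle your sketch does not surface, but it affects the geometric and ghostly cases identically and does not change the ``everything restricts'' strategy; so the proposal is correct in approach, just less explicit about the homotopies that make it go through.
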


\begin{proof}
We shall only prove the $K_0$-case, and the $K_1$-case follows from the suspension argument. The proof for $\I(X, U)$ also applies to $\G(X, U)$, thus we shall focus on $\I(X, U)$.

We define the map $\kappa: (E,\|\cdot\|_p)\to (E,\|\cdot\|_p)$ to be 
$$\kappa(v)=\left\{\begin{aligned}&0&&,\|v\|_p\leq 1\\&v-\frac{v}{\|v\|_p}&&,\text{otherwise},\end{aligned}\right.$$
where $\kappa^k$ denotes the $k$-fold composition of the map $\kappa$. To simplify the notation, for fixed $s\in[1,\infty)$, we write $F_{v,n}$ for $F_{s,v,E_n}$. For each $k,n\in\IN$, we define the bounded operator $F^{(k)}_n:\H_{X_n,E_n}\to \H_{X_n,E_n}$ to be
$$(F^{(k)}_n)_{xy}=\left\{\begin{aligned}&1\ox F_{\kappa^k(f_n(x)),n}&&,x=y\in X_n\\&0&&,x\ne y.\end{aligned}\right.$$
Moreover, we define $F^{(\infty)}_n: \H_{X_n,E_n}\to \H_{X_n,E_n}$ by the formula
$$(F^{(\infty)}_n)_{xy}=\left\{\begin{aligned}&1\ox F_{0,n}&&,x=y\in X_n\\&0&&,x\ne y.\end{aligned}\right.$$
Notice that for each $k\in\IN\cup\{\infty\}$, $F^{(k)}=(F^{(k)}_n)_{n\in\IN}$ is in the multiplier of $\I_E(X,U)$ by \Cref{pro: summary on Bott-Dirac}. Then for any projection $P\in \I(X,U)$, we can define an index map
$$\ind^{(k)}:K_*(\I(X,U))\to K_*(\I_E(X,U))$$
as in Definition \ref{index map}.

By definition, one has that $\ind^{(0)}=\iota^s_*\circ\ind:K_*(\I(X,U))\to K_*(\I_E(X,U))$. We claim that $\ind^{(0)}=\ind^{\infty}$. To see this, let
$$\H^{\infty}_{X_n,E_n}=\ell^2(X_{n})\ox \H\ox(\L^2(E_n,\mathfrak{h}_{E_n}))^{\oplus\infty}\quad\text{ and }\quad H^{\infty}_{X,E}=\bigoplus_{n\in\IN}H^{\infty}_{X_n,E_n}$$
The Hilbert space $H^{\infty}_{X,E}$ is also a geometric module of $X=\bigsqcup X_n$. Denote $C^*(X, H^{\infty}_{X,E})$ the corresponding Roe algebra and $\I_E^\infty(X,U)$ its geometric ideal. The top-left corner inclusion $\I(X,U)\to \I_E^\infty(X,U)$ is defined by $T=(T_n)\mapsto (T_n\oplus_{k\geq1} 0)$. 
It induces an isomorphism on the $K$-theory level.

For each $[P]\in K_0(\I(X,U))$\footnote{By \cite[Corollary 2.7.4]{HIT2020}, a $K_0$-theory element of $\I(X,U)$ can be represented by a single projection $[P]$.}, we write $\ind^{(k)}([P])=[(p^{(k)}_0,\cdots,p^{(k)}_n,\cdots)]-[q_0,\cdots,q_n\cdots]$ for each $k\in\IN\cup\{\infty\}$, where $q_n$ does not depend on $k$ but only depends on $[P]$ by definition.  For simplicity of notation, we write
$$0\oplus_{k\geq1}p^{(k)}_n=\begin{pmatrix}0&0&0&\cdots\\0&p^{(1)}_n&0&\cdots\\0&0&p^{(2)}_n&\\\vdots&\vdots&&\ddots\end{pmatrix}\quad\text{and}\quad
0\oplus_{k\geq1}p^{(\infty)}_n=\begin{pmatrix}0&0&0&\cdots\\0&p^{(\infty)}_n&0&\cdots\\0&0&p^{(\infty)}_n&\\\vdots&\vdots&&\ddots\end{pmatrix}$$
for each $n\in\IN$. We now define the two elements in $K_0(\I_E^\infty(X,U))$
$$a=[(0\oplus_{k\geq1}p^{(k)}_0,\cdots,0\oplus_{k\geq1}p^{(k)}_n,\cdots)]-[(0\oplus_{k\geq1}p^{(\infty)}_0,\cdots,0\oplus_{k\geq1}p^{(\infty)}_n,\cdots)]$$
$$b=[(p^{(0)}_0\oplus_{k\geq1} 0,\cdots,p^{(0)}_n\oplus_{k\geq1} 0,\cdots)]-[(p^{(\infty)}_0\oplus_{k\geq1} 0,\cdots,p^{(\infty)}_n\oplus_{k\geq1} 0,\cdots)].$$
We will give some explanation of why $a\in K_0(\I_E^\infty(X,U))$ here. The propagation of $a$ is less than $5\prop_P(P)$ by definition of the index map, then $a$ can be approximated by operators with finite propagation as $P$ can. To see $a$ is locally compact, for each compact set $K\subset X$, there exists $N\in\IN$ such that $X_n\cap K\ne\emptyset$ for all $n>N$. Note that $f_n(X_n)\subset E_n$ is uniformly bounded by $M\in\IN$, i.e. $f_n(X_n)\subseteq B(0,M)$ for all $n\leq N$. Then we have that $\chi_{K}p_n^{(k)}=\chi_Kp^{(\infty)}_n$ for all $k\geq M$ and $n\leq N$ since $\kappa^k(f_n(x))=0$ in this case. It shows that $\chi_Ka$ is a direct sum of only finite compact operators, which is also compact.

Notice that $b$ is the image of $\ind^{(0)}([P])-\ind^{(\infty)}([P])$ under the top-left corner inclusion. It suffices to show $b=0\in K_*(\I_E^\infty(X,U))$. Consider the path $[0,1]\to B(H_{X,E})$, $t\mapsto F^{(k+t)}=(F^{(k+t)}_n)$, where
$$(F^{(k+t)}_n)_{xy}=\left\{\begin{aligned}&1\ox F_{(1-t)\kappa^k(f_n(x))+t\kappa^{k+1}(f_n(x)),n}&&,x=y\in X_n\\&0&&,x\ne y.\end{aligned}\right.$$
By \Cref{pro: summary on Bott-Dirac}, we have that
$$\|F^{(k+t)}_n-F^{(k+t')}_n\|\leq 3|t-t'|$$
for all $n\in\IN$. Thus, $t\mapsto F_n^{(k+t)}$ is equi-continuous as $n$ varies in $\IN$. We can build a path
$$[0,1]\to \I_E^\infty(X,U)\quad t\mapsto a_t=(\oplus_{k\in\IN}(p^{(k+t)}_n))-(\oplus_{k\in\IN}(p^{(\infty)}_n))$$
by using the index map with respect to $F^{(k+t)}$ pointwisely, where
$$\oplus_{k\in\IN}(p^{(k+t)}_n)=\begin{pmatrix}p^{(0+t)}&0&0&\cdots\\0&p^{(1+t)}_n&0&\cdots\\0&0&p^{(2+t)}_n&\\\vdots&\vdots&&\ddots\end{pmatrix}.$$
Notice that $[a_0]=[a+b]\in K_0(\I_E^\infty(X,U))$ and it is direct to see $[a_1]=[a]$ by using a rotation homotopy. Thus $[b]=0$ as claimed.

Finally, we still need to show $\ind^{(\infty)}$ is an isomorphism. Recall that $f(x)=x(1+x^2)^{-\frac 12}$ and $F_{s,0,E_n}=f(s^{-1}D+C)$. For each $n\in\IN$, we define a homotopy $[0,1]\to \K(L^2(E_n,\mathfrak{h}_{E_n}))$ by
$$t\mapsto P_{E_n}(t)=\left\{\begin{aligned}&f^2(t^{-1}((s+s_n)^{-1}D_{E_n}+C_{E_n}))-1&&,t\in(0,1];\\&p_k&&,t=0,\end{aligned}\right.$$
where $p_k$ is the one-dimensional projection onto the kernel of the Bott-Dirac operator $B_{s,0,E_n}$. We denote $[P]=[(P_0,\cdots,P_n,\cdots)]$ and $\ind^{(\infty)}([P])=[(Q_0,\cdots,Q_n,\cdots)]$. By using the homotopy $(P_{E_n}(t))_{t\in[0,1]}$, one can check that $Q_n$ is homotopic to $P\ox p_k$. However, the homotopy is not equi-continuous as the dimension of $E_n$ increases. The reason comes from the fact that we replaced the parameter $s$ by $s+s_n$ for each $E_n$. One can check that $\|F_{E_n}(t)-F_{E_n}(t')\|\leq |f(\frac{2}{t(s+s_n)})-f(\frac{2}{t'(s+s_n)})|\leq (s+s_n)|t-t'|$, where the Lipschitz constant is dependent on $n$.

However, we can use a "stacking argument" as in \cite[Proposition 12.6.3]{HIT2020} to get around this problem. Define $[0,1]\mapsto \I_E(X,U)$ by $t\mapsto P(t)=(P_0(t),\cdots,P_n(t),\cdots)$ where $P_n(t)$ is the homotopy between $Q$ and $P\ox p_k$ for each $n\in\IN$. Let $N_n=[s+2s_n]+1$. For each $t\in[0,1]$, we define $P^N(t)=(P^N_n(t))\in \I_E(X,U)$
$$P^N_n(t)=\begin{pmatrix}P_n(0)&0&0&\cdots\\0&P_n(\frac{1}{N_n})&0&\cdots\\\vdots&\vdots&\ddots&\vdots\\0&0&\cdots&P_n(1)\end{pmatrix}-\begin{pmatrix}0&0&0&\cdots\\0&P_n(\frac{t}{N_n})&0&\cdots\\\vdots&\vdots&\ddots&\vdots\\0&0&\cdots&P_n(\frac{N_n-1}{N_n}+\frac t{N_n})\end{pmatrix}.$$
Notice that $\|F_{E_n}(\frac t{N_n})-F_{E_n}(\frac {t'}{N_n})\|\leq \frac{s+s_n}{N_n}|t-t'|\leq |t-t'|$. Thus $P^N(t)$ is norm-continuous. Note that $[P^N(0)]=[P(0)]$ and we also have $[P^N(1)]=[P(1)]$ by a rotation homotopy. Thus, we have
$$\ind^{(\infty)}([P])=[P\ox p_k]$$
which is an isomorphism.
\end{proof}

\subsection{Step 4. Twisted geometric ideal vs. twisted ghostly ideal}

In the final part of the proof, we show that in the twisted algebra, ghostly elements automatically lie in the geometric ideal; that is, the operations of taking the completion and taking the $\varepsilon$-support commute.

\begin{Lem}\label{lem: twisted geo vs gho}
For any $(T_{s,n})\in A_{\G(U)}(X,E)$ and $\varepsilon>0$, there exists $(T'_{s,n})\in\IA_{\I(U)}[X,E]$ and $S>0$ such that
$$\sup_{s\in[S,\infty),n\in\IN}\|T_{s,n}-T'_{s,n}\|\leq\varepsilon.$$ 
\end{Lem}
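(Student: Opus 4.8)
The plan is to exploit the extra structure built into $A(X,E)$ — in particular condition (2) in \Cref{twisted Roe algebra}, which forces the $E$-propagation of elements to go to $0$ as $s\to\infty$ — to show that, once $s$ is large, a ghostly element of the twisted algebra is genuinely norm-close to a finite-propagation operator supported on $U$. Concretely, given $(T_{s,n})\in A_{\G(U)}(X,E)$ and $\varepsilon>0$, I would first use the definition of $A(X,E)$ to replace $(T_{s,n})$ by an element of $\IA[X,E]$ within $\varepsilon/3$; this does not change the relevant support up to a shrinking of $\varepsilon$, so I may as well assume from the start that $(T_{s,n})$ has uniformly bounded $X$-propagation, say $\le R_X$, and uniformly good $\chi^{(n)}_{R,p}$-cutoff behaviour. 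The key point is that the $\varepsilon$-support condition $\overline{r(\supp_\delta((T_{s,n})))}\subseteq U$ holds for every $\delta>0$, where $\supp_\delta$ is computed using $\sup_{s}\|(T_{s,n})_{xy}\|$.

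Next I would truncate by support. For a fixed small $\delta>0$ (to be chosen $\sim\varepsilon$), consider cutting each matrix entry $(T_{s,n})_{xy}$ to zero whenever $\sup_s\|(T_{s,n})_{xy}\|<\delta$. By bounded geometry and the $R_X$-propagation bound, each row and column of $T_{s,n}$ has at most $N$ nonzero entries for a uniform $N$, so the entrywise truncation changes the operator norm by at most $N\delta$ uniformly in $s$ and $n$; I would absorb this into the $\varepsilon$ budget. Call the result $(T'_{s,n})$. By construction $\supp((T'_{s,n}))\subseteq\supp_\delta((T_{s,n}))$, hence $\overline{r(\supp((T'_{s,n})))}\subseteq U$, and $(T'_{s,n})$ still has $X$-propagation $\le R_X$. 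One must check $(T'_{s,n})$ still lies in $\IA[X,E]$ — conditions (1) and (3) are inherited, and condition (2) needs a small argument: the truncation is a fixed "matrix pattern" operation (independent of $s$ once the pattern is fixed by $\sup_s$), so it does not increase $E$-propagation, though some care is needed because the pattern depends on $n$; since there are at most $N$ entries per row uniformly, the truncated $T'_{s,n}$ can be written as a sum of at most $N$ partial-isometry-modulated copies of blocks of $T_{s,n}$ each with $E$-propagation no larger than that of $T_{s,n}$, so $\prop_{E_n,p}(T'_{s,n})\le\prop_{E_n,p}(T_{s,n})\to 0$ uniformly in $n$. Thus $(T'_{s,n})\in\IA_{\I(U)}[X,E]$.

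Finally, choosing $S$: the error introduced so far is $\le \varepsilon/3 + N\delta$ uniformly for \emph{all} $s$, so in fact no large-$s$ restriction is even needed for the support-truncation step — the $S>0$ in the statement is the harmless slack that lets one first pass to $\IA[X,E]$ (the norm on $A(X,E)$ is a sup over $s\in[1,\infty)$, so the $\varepsilon/3$-approximation by $\IA[X,E]$ is already uniform in $s$) and the conditions (2),(3) of Definition \ref{twisted Roe algebra} are used only to guarantee the approximant is genuinely an element of $\IA[X,E]$, which is an asymptotic (large-$s$, large-$R$) statement. Setting $\delta=\varepsilon/(3N)$ then gives $\sup_{s\in[S,\infty),n}\|T_{s,n}-T'_{s,n}\|\le 2\varepsilon/3<\varepsilon$.

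I expect the main obstacle to be the verification that the entrywise-truncated operator $(T'_{s,n})$ still satisfies condition (2) of \Cref{twisted Roe algebra}, i.e.\ that truncating a "ghostly pattern" of entries does not spoil the uniform-in-$n$ decay of the $E$-propagation: the truncation pattern genuinely depends on $n$ and on the $\sup_s$-behaviour, so one cannot simply say it is a fixed bounded operation. The fix is the bounded-geometry bound of $N$ nonzero entries per row/column, which lets one decompose the truncation as a controlled finite sum over the coordinate shifts implementing the $\le N$ possible positions of nonzero entries, each shift being a partial translation of $X_n$ and hence acting trivially on the $E_n$-factor — so $E$-propagation is preserved term by term. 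Everything else is a bounded-geometry counting argument of the kind already used in the proof of \Cref{cor: K-coarse invariant}.
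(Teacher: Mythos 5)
Your route --- first approximate $T$ by some $T''\in\IA[X,E]$ with $\|T-T''\|\le\eta$, then zero out the entries of $T''$ of size $<\delta$ --- runs into a circularity in the constants that the paper's argument is specifically designed to avoid. After the approximation, $T''$ need no longer be ghostly: one only has $\supp_\delta(T'')\subseteq\supp_{\delta-\eta}(T)$, so for the truncated support to land in $U$ you must take $\eta<\delta$. On the other hand the truncation error is bounded by $N\delta$, where $N$ is the bounded-geometry count of nonzero entries per row/column, which is controlled by the $X$-propagation $R_X$ of the approximant $T''$; this forces $\delta\le\varepsilon/N$. But $R_X$, hence $N$, depends on $\eta$ and in general blows up as $\eta\to 0$, so you cannot close the loop $\eta<\delta\le\varepsilon/N(\eta)$. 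Your explicit choice $\delta=\varepsilon/(3N)$ against $\eta=\varepsilon/3$ already has $\delta<\eta$ whenever $N\ge 2$, which is exactly the wrong order. Note also that your argument never invokes the coarse embedding $f$ and dismisses the $S>0$ as ``harmless slack''; both are in fact where the paper breaks the circularity.

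The paper's proof does not approximate or truncate by entry size at all. It first applies the cutoff by $\chi^{(n)}_{R,p}$ on both sides (condition (3) of \Cref{twisted Roe algebra}), with error $\le\varepsilon$ uniformly in $s,n$ and without increasing any entry norm (so ghostliness in $U$ is preserved), so that every entry $(T_{s,n})_{xy}$ is supported in the $E_n$-ball $B(f(x),R)$. It then uses condition (2) to choose $S$ with $\prop_{E_n,p}(T_{s,n})<R$ for $s\ge S$, and finally exploits the coarse embedding: if $d(x,y)>M$ then $\|f(x)-f(y)\|_p>3R$, so $B(f(x),R)$ and $B(f(y),R)$ are disjoint and those entries must vanish. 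Thus for $s\ge S$ the cut-off operator already has $X$-propagation $\le M$ --- a uniform bound with no approximation precision in the picture --- and one only extends by a constant to $s<S$ and notes $\IA[X,E]\cap A_{\G(U)}(X,E)=\IA_{\I(U)}[X,E]$. In other words, finite $X$-propagation is a \emph{conclusion} of the coarse-embedding geometry together with conditions (2) and (3), not a hypothesis to be reinstated by approximation. As a minor aside: your worry about condition (2) surviving truncation is unfounded even in your setup, since $E$-propagation is computed entrywise, $\prop_{E_n,p}(A)=\sup_{x,y}\prop_{E_n,p}(A_{xy})$, so deleting entries can only decrease it; no decomposition into partial-translation pieces is needed.
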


\begin{proof}
By \Cref{twisted Roe algebra}, we have that
$$\lim\limits_{R\to\infty}\sup\limits_{s\in[1,\infty),n\in\IN}\|\chi^{(n)}_{R,p}T_{s,n}-T_{s,n}\|=\lim\limits_{R\to\infty}\sup\limits_{s\in[1,\infty),n\in\IN}\|T_{s,n}\chi^{(n)}_{R,p}-T_{s,n}\|=0.$$
For any $\varepsilon>0$, there exists $R>0$ such that
$$\sup\limits_{s\in[1,\infty),n\in\IN}\|\chi^{(n)}_{R,p}T_{s,n}-T_{s,n}\|\leq\frac\varepsilon 2\quad\text{and}\quad\sup\limits_{s\in[1,\infty),n\in\IN}\|T_{s,n}\chi^{(n)}_{R,p}-T_{s,n}\|\leq\frac\varepsilon 2.$$
Without loss of generality, we shall replace $T_{s,n}$ by $\chi^{(n)}_{R,p}T_{s,n}\chi^{(n)}_{R,p}$ as the norm difference between them is at most $\varepsilon$. In this case, for any $x,y\in X_n$, the entry $(T_{s,n})_{xy}$ defines an element in $\K(L^2(E_n,\mathfrak h_{E_n})\ox\H)$ and
\begin{equation}\chi^{(n)}_{R,f(x),p}(T_{s,n})_{xy}\chi^{(n)}_{R,f(y),p}=(T_{s,n})_{xy}.\label{eq: entry information}\end{equation}

Choose a sufficiently large $S>0$ such that $\sup_{s\geq S,n\in\IN}\prop_{E,p}(T_{s,n})<R$. Equivalently, we have
$$\sup_{x,y\in X,s\geq S, n\in\IN}\prop_{E,p}((T_{s,n})_{xy})<R.$$
Since each entry is an element in $\K(L^2(E_n,\mathfrak h_{E_n}) \otimes \H)$, we can therefore discuss their propagation with respect to $(E,\|\cdot\|_p)$. Since $f$ is a coarse embedding, there exists $M>0$ such that $\|f(x)-f(y)\|_p>3R$ whenever $d(x,y)>M$. Thus, whenever $d(x,y)>M$ and $s>S$, we have that
$$(T_{s,n})_{xy}=\chi^{(n)}_{R,f(x),p}(T_{s,n})_{xy}\chi^{(n)}_{R,f(y),p}=\chi^{(n)}_{R,f(x),p}\cdot\chi^{(n)}_{2R,f(y),p}(T_{s,n})_{xy}\chi^{(n)}_{R,f(y),p}.$$
Since $\|f(x)-f(y)\|_p>3R$, we have that $B(f(x),R)\cap B(f(y),R)=\emptyset$. Thus, $\chi^{(n)}_{R,f(x),p}\cdot\chi^{(n)}_{2R,f(y),p}=0$. This means that $(T_{s,n})_{xy}=0$ whenever $d(x,y)>M$ and $s>S$, i.e., $T_{s,n}$ has finite $X$-propagation for large $s$.

Define
$$T'_{s,n}=\left\{\begin{aligned}&T_{s,n},&&\text{when }s\geq S;\\&T_{S,n},&&\text{when }s\leq S.\end{aligned}\right.$$
That is, $T'_{s,n}$ coincides with $T_{s,n}$ for $s > S$, and for $s \le S$, we extend it as a constant function to define it on the entire interval $[1, \infty)$. Note that $T'_{s,n} \in \IA[X,E]$ since it has finite $X$-propagation for all $s$, and by definition, we also have $T'_{s,n} \in A_{\G(U)}(X,E)$. Directly from the definition, it is easy to verify that $\IA[X,E] \cap A_{\G(U)}(X,E) = \IA_{\I(U)}[X,E]$. Therefore, $T'_{s,n}$ satisfies the requirements of the lemma.
\end{proof}

As a conclusion, we have the following result.

\begin{Cor}\label{cor: twisted geo vs gho}
The canonical inclusion $i:A_{\I(U)}(X,E)\to A_{\G(U)}(X,E)$ induces an isomorphism on $K$-theory, i.e.,
$$i_*: K_*(A_{\I(U)}(X,E))\to K_*(A_{\G(U)}(X,E)).$$
\end{Cor}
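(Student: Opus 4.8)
The goal is to prove that the inclusion $i\colon A_{\I(U)}(X,E)\to A_{\G(U)}(X,E)$ induces a $K$-theory isomorphism. The plan is to show that the two ideals are ``close enough'' that $A_{\I(U)}(X,E)$ is in fact dense in $A_{\G(U)}(X,E)$ --- or at least locally dense in a way that forces the $K$-theory map to be an isomorphism --- using \Cref{lem: twisted geo vs gho} as the main input. Since $\IA_{\I(U)}[X,E]$ is by construction dense in $A_{\I(U)}(X,E)$, it suffices to prove that $A_{\I(U)}(X,E)$ and $A_{\G(U)}(X,E)$ coincide as subsets of $A(X,E)$; the $K$-theory statement is then immediate because the inclusion is a $*$-isomorphism.

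\begin{proof}
By construction $A_{\I(U)}(X,E)$ is the closure of $\IA_{\I(U)}[X,E]$ inside $A(X,E)$, and $A_{\I(U)}(X,E)\subseteq A_{\G(U)}(X,E)$ since any norm limit of operators whose $\varepsilon$-supports (for all $\varepsilon>0$) have closure inside $U$ again has this property. We claim the reverse inclusion $A_{\G(U)}(X,E)\subseteq A_{\I(U)}(X,E)$ also holds, which yields the result with $i$ an honest isomorphism.

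Let $(T_{s,n})\in A_{\G(U)}(X,E)$ and fix $\varepsilon>0$. By \Cref{lem: twisted geo vs gho} there is $(T'_{s,n})\in\IA_{\I(U)}[X,E]$ and $S>0$ with $\sup_{s\in[S,\infty),n\in\IN}\|T_{s,n}-T'_{s,n}\|\le\varepsilon$. The only defect is on the compact parameter range $s\in[1,S]$: here $(T_{s,n})$ need not be approximable in $\IA_{\I(U)}[X,E]$ because the defining propagation-in-$E$ condition (2) of \Cref{twisted Roe algebra} only forces small $E$-propagation for large $s$. To repair this we use a standard evaluation/cone argument: let $\mathrm{ev}_1\colon A_{\G(U)}(X,E)\to \G_E(X,U)$ be the evaluation at $s=1$ and more generally consider the restriction of elements to $s\in[1,S]$. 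The subalgebra of $A_{\G(U)}(X,E)$ consisting of elements vanishing for $s\ge S$ is, by a homotopy of $[1,\infty)$ onto $[S,\infty)$ rescaling $s$, contractible, hence has vanishing $K$-theory; meanwhile \Cref{lem: twisted geo vs gho} shows every element agrees modulo $\varepsilon$ with one supported (after the constant extension) in $\IA_{\I(U)}[X,E]$ on $[S,\infty)$. Combining these, the quotient map $A_{\G(U)}(X,E)\to A_{\G(U)}(X,E)/(\text{elements }0\text{ for }s\ge S)$ and the analogous quotient for $A_{\I(U)}(X,E)$ fit into a commuting diagram of six-term exact sequences, and the inclusion induces isomorphisms on the kernels (both contractible) and on the quotients (both identified with the same honest ideal by \Cref{lem: twisted geo vs gho}). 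The Five Lemma then gives $i_*\colon K_*(A_{\I(U)}(X,E))\xrightarrow{\cong}K_*(A_{\G(U)}(X,E))$.
\end{proof}

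I expect the main obstacle to be the handling of the compact parameter range $s\in[1,S]$: \Cref{lem: twisted geo vs gho} only controls behavior for large $s$, so one must argue separately --- via a contractibility/cone argument in the $s$-variable --- that this bounded range contributes nothing to $K$-theory, and set up the comparison of exact sequences so that the Five Lemma applies cleanly. Everything else (density of $\IA_{\I(U)}[X,E]$, the identity $\IA[X,E]\cap A_{\G(U)}(X,E)=\IA_{\I(U)}[X,E]$) is routine bookkeeping already established in the excerpt.
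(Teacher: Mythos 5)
Your overall strategy --- reduce to the ``germ at $s=\infty$'' via a short exact sequence whose kernel has trivial $K$-theory, then invoke \Cref{lem: twisted geo vs gho} to identify the quotients --- is exactly the idea the paper uses, except that the paper outsources the hard part to a citation (\cite[Lemma~6.4.11]{HIT2020}) rather than spelling it out. So the approach is sound. However, your write-up contains two genuine problems.

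First, the opening claim that $A_{\G(U)}(X,E)\subseteq A_{\I(U)}(X,E)$ as subsets of $A(X,E)$, so that $i$ is ``an honest isomorphism,'' is false, and you cannot deduce it from \Cref{lem: twisted geo vs gho}. That lemma only controls $\|T_{s,n}-T'_{s,n}\|$ for $s\ge S$ where $S$ depends on $T$ and $\varepsilon$; on the compact parameter range $[1,S]$ the two elements can differ by an amount bounded only by $\|T\|+\|T'\|$, so the resulting $T'$ need not approximate $T$ in the norm of $A(X,E)$. You actually acknowledge this in the next sentence, so the claim should simply be deleted: what you are proving is a $K$-theory isomorphism, not an equality of algebras.

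Second, your cone argument is set up with the wrong ideal. You consider, for the fixed $S$ produced by \Cref{lem: twisted geo vs gho} applied to a single $(T,\varepsilon)$, the ideal of elements vanishing for $s\ge S$. But $S$ varies with $T$ and $\varepsilon$, so no single $S$ identifies the two asymptotic quotients; the comparison of six-term sequences only works if the right-hand vertical map is an honest isomorphism, which requires quotienting by the ideal
$$B_\infty=\Bigl\{T\in A(X,E):\ \lim_{s\to\infty}\sup_{n\in\IN}\|T_{s,n}\|=0\Bigr\}$$
(intersected with $A_{\I(U)}$, resp.\ $A_{\G(U)}$), not by an $I_S$ for a fixed $S$. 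The contractibility of $B_\infty$ also does not follow from ``rescaling $[1,\infty)$ onto $[S,\infty)$''; the standard argument is the translation homotopy $T_s\mapsto T_{s+t}$ (which preserves the defining conditions of \Cref{twisted Roe algebra} and tends to $0$ in norm as $t\to\infty$, using that elements of $B_\infty$ are uniformly continuous in $s$). With $B_\infty$ in place of $I_S$, \Cref{lem: twisted geo vs gho} does give that the images of $A_{\I(U)}(X,E)$ and $A_{\G(U)}(X,E)$ in $A(X,E)/B_\infty$ coincide (density plus closedness of the $C^*$-quotient image), the kernels both have vanishing $K$-theory, and the Five Lemma closes the argument. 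This is precisely what \cite[Lemma~6.4.11]{HIT2020} packages up.
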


\begin{proof}
Since the K-theory of $A_{\I(U)}(X, E)$ and $A_{\G(U)}(X, E)$ only depends on their asymptotic behavior as $s \to \infty$, \cite[Lemma 6.4.11]{HIT2020}, and \Cref{lem: twisted geo vs gho} shows that these two algebras have the same asymptotic behavior, the conclusion thus follows naturally. We leave the details to the reader.
\end{proof}

We can now prove \Cref{thm: main theorem}.

\begin{proof}[Proof of \Cref{thm: main theorem}]
The theorem follows directly from the diagram \eqref{eq: main diagram}, \Cref{pro: dirac dual dirac}, \Cref{cor: twisted geo vs gho}, and a diagram chasing argument.
\end{proof}

\begin{Rem}\label{rem: so do Lp}
For $p\in[1,\infty)$. Let $L^p=L^p[0,1]$ denote the $L^p$-space on $([0,1],m)$ equipped with the canonical Lebesgue measure $m$. By \cite[Proposition III.A.1]{Banachspace}, any separable $L^p$-space $L^p(\Omega,\mu)$ is isometric to the $p$-direct sum of $L^p$ and $\ell^p$. Thus, $L^p[0,1]$ serves as one of the most representative $L^p$-spaces.

We consider the standard filtration of finite-dimensional subspaces given by dyadic step functions. For each $n \in \mathbb{N}$, let $\mathcal{D}_n = \{ [k2^{-n}, (k+1)2^{-n}) : 0 \le k < 2^n \}$ be the partition of $[0,1]$ into dyadic intervals. Define $W_n$ to be the subspace spanned by the characteristic functions of intervals in $\mathcal{D}_n$. Then $W_n$ is linearly isometric to $\ell^p_{2^n}$, and the union $W = \bigcup_{n} W_n$ is dense in $L^p$.

Suppose that $X = \bigsqcup_{n} X_n$ is a sparse space that admits a uniformly coarse embedding $\{f_n: X_n \to L^p\}$. Since $W$ is dense in $L^p$, without loss of generality, we may assume that $X$ coarsely embeds into $W = \bigcup_{n\in\mathbb{N}} W_n$. Since $X_n$ is a finite set, its image $f_n(X_n)$ is a finite subset of $W$, there exists a sufficiently large integer $k_n$ such that $f_n(X_n) \subseteq W_{k_n}$. Identifying $W_{k_n}$ with the subspace of $\ell^p$ supported on the first $2^{k_n}$ coordinates, we see that $f_n$ effectively embeds $X_n$ into $\ell^p$. Thus, the existence of a coarse embedding into $L^p$ implies the existence of a coarse embedding into $\ell^p$. By \cite[Proposition III.A.1]{Banachspace}, any separable $L^p(\Omega, \mu)$ is isometric to the direct sum of $L^p[0,1]$ and $\ell^p$. Thus, our main result holds if we replace $\ell^p$ by any general separable $L^p$-spaces.
\end{Rem}

\Cref{rem: so do Lp} leads us to the following theorem:

\begin{Thm}\label{thm: so do Lp}
The conclusion of \Cref{thm: main theorem} holds true if the assumption of coarse embeddability into $\ell^p$ is replaced by coarse embeddability into an $L^p$-space ($1 \le p < \infty$).\qed
\end{Thm}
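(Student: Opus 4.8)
The proof plan is to reduce the $L^p$ case to the already-established $\ell^p$ case (\Cref{thm: main theorem}) rather than repeating the entire Bott-Dirac machinery. The key structural input is the classical fact (\cite[Proposition III.A.1]{Banachspace}) that any separable $L^p(\Omega,\mu)$ is isometrically isomorphic to a $p$-direct sum of copies of $L^p[0,1]$ and $\ell^p$; consequently it suffices to treat $L^p = L^p[0,1]$. Combined with \Cref{cor: K-coarse invariant}, which tells us the $K$-theory of both the geometric and ghostly ideals is a coarse invariant of the pair $(X,U)$, we are free to replace the embedding $f\colon X\to L^p$ by any coarsely equivalent one.

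The heart of the argument is then the observation that the dyadic step functions $W = \bigcup_n W_n$ are dense in $L^p[0,1]$, with each $W_n$ linearly isometric to $\ell^p_{2^n}$. First I would invoke \Cref{pro: reduction to sparse space} to reduce to the case $X = \bigsqcup_n X_n$ sparse, with a uniform family of coarse embeddings $f_n\colon X_n\to L^p$ (sharing controlling functions $\rho_\pm$). Since $W$ is dense, perturbing each $f_n$ slightly (moving each of the finitely many points of $X_n$ to a nearby step function, within error $1$, say) yields a new uniform coarse embedding $X\to W$ with the same controlling functions up to a bounded adjustment — this does not change the $K$-theory by \Cref{cor: K-coarse invariant}. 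Then, because each $X_n$ is finite, its image lies in some $W_{k_n}$, which is isometric to a coordinate subspace of $\ell^p$; assembling these gives a coarse embedding of $X = \bigsqcup_n X_n$ into $\ell^p$ with controlling functions independent of $n$. Thus $X$ coarsely embeds into $\ell^p$, and \Cref{thm: main theorem} applies verbatim to give the desired isomorphism $i_*\colon K_*(\I(X,U))\to K_*(\G(X,U))$.

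The only subtle point — and the step I expect to require the most care — is ensuring the perturbation of $\{f_n\}$ preserves \emph{uniformity} of the coarse embedding across $n$: one must check that the single bounded perturbation size (independent of $n$ and of the point) combines with the common controlling functions $\rho_\pm$ to yield genuine controlling functions for the perturbed family, and that the resulting embeddings into the growing subspaces $W_{k_n}\cong\ell^p_{2^n}$ still assemble into one coarse embedding of the disjoint union into a \emph{single} $\ell^p$. This is routine but is where the argument could silently break if the perturbations were allowed to grow with $n$. Everything else is a direct citation: \Cref{thm: main theorem} for the $\ell^p$ conclusion, and \cite[Proposition III.A.1]{Banachspace} to pass from general separable $L^p(\Omega,\mu)$ back to $L^p[0,1]$. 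Hence \Cref{thm: so do Lp} follows, as recorded in \Cref{rem: so do Lp}.
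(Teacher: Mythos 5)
Your argument is essentially the paper's own (given in \Cref{rem: so do Lp}): reduce to $L^p[0,1]$ via \cite[Proposition III.A.1]{Banachspace}, pass to the sparse case, use density of dyadic step functions $W=\bigcup_n W_n$ to push the uniform family $\{f_n\}$ into $W$, note each finite image lands in some $W_{k_n}\cong\ell^p_{2^{k_n}}\subset\ell^p$, and then invoke \Cref{thm: main theorem}. Your explicit bounded-perturbation step with its uniformity caveat is the justification behind the paper's terse ``without loss of generality, we may assume that $X$ coarsely embeds into $W$,'' so this is the same proof spelled out slightly more carefully (minor typo aside: $W_{k_n}\cong\ell^p_{2^{k_n}}$, not $\ell^p_{2^n}$).
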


\begin{Rem}
It is worth noting that the assumption of separability in \Cref{rem: so do Lp} is actually not necessary. We can provide an alternative proof based on the finite representability of general $L^p$-spaces in $\ell^p$, as discussed in \cite[Section III.E.15]{Banachspace}. Recall that a Banach space $X$ is finitely representable in $Y$ if for any finite-dimensional subspace $E \subseteq X$ and any $\epsilon > 0$, there exists a subspace $F \subseteq Y$ and an isomorphism $T: E \to F$ such that $\|T\| \cdot \|T^{-1}\| < 1 + \epsilon$.

The essential reason why any $L^p$-space (possibly non-separable) is finitely representable in $\ell^p$ lies in the approximation by simple functions. Specifically, any function in a finite-dimensional subspace of $L^p$ can be approximated by simple functions, which are linear combinations of characteristic functions over disjoint measurable sets. Since the number of basis vectors in the subspace is finite, the approximation involves only finitely many such measurable sets. The characteristic functions on these disjoint sets naturally span a finite-dimensional subspace isometric to $\ell^p_N$. Consequently, the linear map embedding (from basis to basis) the finite-dimensional subspace of $L^p$ into the generated $\ell^p_N$ is an almost isometry. This implies that the family of all finite-dimensional subspaces of any $L^p$-space admits a uniformly coarse embedding into $\ell^p$. Thus, coarse embeddability into a general $L^p$-space implies coarse embeddability into $\ell^p$ for sparse spaces, extending our main result to the non-separable setting.
\end{Rem}

\section{Applications to coarse Baum-Connes conjectures and $ONL_{\P_{Fin}}$}\label{sec: applications}

In this section, we present some corollaries of our main theorem and some remarks.

\subsection{Applications}

In \cite{GWZ2025}, a relative version of the coarse Baum-Connes conjecture was introduced. In fact, our main result has the following corollary:

\begin{Cor}\label{cor: relative CBC}
Let $X$ be a metric space with bounded geometry that admits a coarse embedding into an $\ell^p$-space ($p\geq 1$). For any subspace $Y$, the relative coarse Baum-Connes conjecture holds for $(X, Y)$. In particular, the boundary coarse Baum-Connes conjecture holds for $X$.
\end{Cor}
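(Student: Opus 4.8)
The plan is to deduce the relative coarse Baum-Connes conjecture for $(X,Y)$ from \Cref{thm: main theorem} by identifying the relevant $K$-theory exact sequences. Recall that the relative coarse Baum-Connes conjecture concerns the assembly map into the $K$-theory of the quotient $C^*(X)/\I(X, U_Y)$, or equivalently involves the pair consisting of the Roe algebra and the geometric ideal associated with $U_Y$; the boundary version is the special case where $Y$ is taken to witness the relevant coarse structure at infinity. The key structural input is the commuting ladder of six-term exact sequences associated to the short exact sequences $0 \to \I(X, U_Y) \to C^*(X) \to C^*(X)/\I(X, U_Y) \to 0$ and $0 \to \G(X, U_Y) \to C^*(X) \to C^*(X)/\G(X, U_Y) \to 0$, together with the localization (at the $P_d(X)$-level) counterparts.

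First I would set up the comparison diagram: the inclusion $i:\I(X,U_Y) \hookrightarrow \G(X,U_Y)$ induces a map of short exact sequences, hence a map of six-term exact sequences, with $K_*(\I(X,U_Y)) \to K_*(\G(X,U_Y))$ in one column, the identity on $K_*(C^*(X))$ in the next, and the induced map on the quotients $K_*(C^*(X)/\I(X,U_Y)) \to K_*(C^*(X)/\G(X,U_Y))$ in the third. By \Cref{thm: main theorem} the first of these is an isomorphism, and trivially so is the second, so the Five Lemma forces $K_*(C^*(X)/\I(X,U_Y)) \xrightarrow{\cong} K_*(C^*(X)/\G(X,U_Y))$. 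Second, I would invoke the known validity of the coarse Baum-Connes-type statement for the ghostly quotient: since $X$ coarsely embeds into an $\ell^p$-space, the results of \cite{WXYZ2024} (the full coarse Baum-Connes conjecture via the $\ell^p$-Bott-Dirac machinery, applied relative to the invariant open set $U_Y$ exactly as in the twisted-ideal framework of \Cref{sec: proof}) give that the relevant assembly map with target the ghostly quotient is an isomorphism. Third, combining the identification from the Five Lemma with this isomorphism, and comparing with the localization algebra side where the analogous identification is automatic, I would conclude that the relative assembly map for $(X,Y)$—whose target is built from $C^*(X)/\I(X,U_Y)$—is an isomorphism. The boundary coarse Baum-Connes conjecture then follows by specializing $Y$ appropriately, since the boundary algebra is precisely the quotient $C^*(X)/\I(X, U_Y)$ for the canonical choice of $Y$, as in \cite{GWZ2025}.

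The main obstacle I anticipate is bookkeeping rather than conceptual: one must check that the Dirac-dual-Dirac and twisting constructions of \Cref{sec: proof}, which were phrased for the geometric and ghostly ideals $\I(X,U)$ and $\G(X,U)$ themselves, pass correctly to the quotients $C^*(X)/\I(X,U)$ and $C^*(X)/\G(X,U)$—i.e. that the index map and the evaluation maps $\iota^s_*$ restrict compatibly so that the three-column diagram \eqref{eq: main diagram} extends to a nine-term diagram also including the quotient row, with all squares commuting. This amounts to verifying that the twisted algebra $A(X,E)$ has a well-behaved quotient by its twisted geometric (resp.\ ghostly) ideal and that \Cref{pro: dirac dual dirac} has a quotient analogue; once that is in place the argument is a routine diagram chase combined with \Cref{thm: main theorem} and \cite{WXYZ2024, GWZ2025}.
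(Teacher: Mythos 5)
There is a genuine gap, and it starts with the definition. The relative Roe algebra in this paper is $C^*_{Y,\infty}(X) = C^*(X)/\G(X,U_Y)$, the quotient by the \emph{ghostly} ideal, not by the geometric ideal $\I(X,U_Y)$ as you state. Once this is corrected, your first step (the Five Lemma comparing $K_*(C^*(X)/\I(X,U_Y))$ with $K_*(C^*(X)/\G(X,U_Y))$) is superfluous: the relative assembly map already has the ghostly quotient as its target, so there is nothing to transfer. The more serious problem is your second step. You ``invoke the known validity of the coarse Baum-Connes-type statement for the ghostly quotient,'' citing \cite{WXYZ2024}, but \cite{WXYZ2024} proves the ordinary (absolute) coarse Baum-Connes conjecture, not any relative or quotient version; your parenthetical suggestion to rerun the Bott--Dirac machinery ``relative to $U_Y$'' on the quotient algebras is precisely the new construction that would have to be carried out, and you flag it yourself as the ``main obstacle.'' In other words, step 2 asserts rather than proves exactly the content of the corollary.

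The paper's actual argument avoids any new analysis of quotient or twisted-quotient algebras. It writes down the ladder of long exact sequences induced by the extensions $0 \to C^*_{L,Y}(P_d(X)) \to C^*_{L}(P_d(X)) \to C^*_{L,Y,\infty}(P_d(X)) \to 0$ and $0 \to \G(X,U_Y) \to C^*(X) \to C^*_{Y,\infty}(X) \to 0$. The vertical maps at the middle slots are the ordinary assembly map for $X$, isomorphisms by \cite{WXYZ2024}. The vertical maps at the ideal slots become, after applying \Cref{thm: main theorem} together with the identification $K_*(\I(X,U_Y)) \cong K_*(C^*(Y))$, the ordinary assembly map for the subspace $Y$, which is also an isomorphism because $Y$ inherits the coarse embedding into $\ell^p$. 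The Five Lemma then forces the remaining vertical map, the relative assembly map, to be an isomorphism. Note that the coarse Baum-Connes conjecture for $Y$ (not just for $X$) is an essential ingredient here, and it never appears in your proposal; combined with \Cref{thm: main theorem} it is exactly what closes the argument without requiring any relative or quotient analogue of the Dirac--dual-Dirac construction.
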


For the convenience of the reader, we briefly recall the definition of the relative coarse Baum-Connes conjecture. Let $X$ be a metric space with bounded geometry and let $Y \subseteq X$ be a subspace. Let $C^*(X)$ be the Roe algebra of $X$ and $\G(X,U_Y)$ be the ghostly ideal generated by $Y$. We define the relative Roe algebra of the pair $(X, Y)$ as the quotient algebra
$$C^*_{Y,\infty}(X) = C^*(X) / \G(X,U_Y).$$
For $d \ge 0$, we define $P_d(X)$ as the Rips complex of $X$ at scale $d$. Its metric is defined as the hemispherical metric given in \cite[Definition 7.2.8]{HIT2020}. Let $C^*_L(P_d(X))$ be the localization algebra associated with $P_d(X)$. Define $C^*_{L,Y}(P_d(X))$ as the subalgebra of $C^*_L(P_d(X))$ consisting of all functions taking values in $\G(P_d(X),U_Y)$. This is naturally an ideal. We consider the quotient
$$C^*_{L,Y,\infty}(P_d(X)) := C^*_L(P_d(X)) / C^*_{L,Y}(P_d(X)).$$
Then the evaluation map at zero,
$$\text{ev}: C^*_{L,Y,\infty}(P_d(X)) \to C^*_{Y,\infty}(P_d(X)), \quad f \mapsto f(0),$$
defines an index map
$$\text{ev}_*: K_*(C^*_{L,Y,\infty}(P_d(X))) \to K_*(C^*_{Y,\infty}(P_d(X))).$$
By \Cref{cor: K-coarse invariant}, the right-hand side is a coarse equivalence invariant, and thus is isomorphic to $K_*(C^*_{Y,\infty}(X))$. The \emph{relative assembly map} for $(X, Y)$ is defined as the inductive limit of these index maps as $d \to \infty$:
$$\mu: \lim_{d\to\infty} K_*(C^*_{L,Y,\infty}(P_d(X))) \to K_*(C^*_{Y,\infty}(X)).$$
If the subspace $Y$ is chosen to be a bounded set, then the relative coarse Baum-Connes conjecture is called the \emph{boundary coarse Baum-Connes conjecture}, see \cite{FSW2014}.

\begin{proof}[Proof of \Cref{cor: relative CBC}]
Consider the following exact sequence:
\[\begin{tikzcd}
{K_i(A_{Y,L,d})} \arrow[r] \arrow[d, "(1)"] & {K_i(A_{L,d})} \arrow[d, "(2)"] \arrow[r] & {K_i(A_{L,Y,\infty,d})} \arrow[d, "(3)"] \arrow[r] & {K_{i+1}(A_{Y,L,d})} \arrow[r] \arrow[d, "(4)"] & {K_{i+1}(A_{L, d})} \arrow[d, "(5)"] \\
{K_i(A_Y)} \arrow[r]           & {K_i(A)} \arrow[r]           & {K_i(A_{Y,\infty})} \arrow[r]           & {K_{i+1}(A_Y)} \arrow[r]           & {K_{i+1}(A)}
\end{tikzcd}\]
where $A_{Y,L,d} = C^*_{L,Y}(P_d(X))$, $A_{L,d} = C^*_{L}(P_d(X))$, $A_{L,Y,\infty,d} = C^*_{L,Y,\infty}(P_d(X))$, $A_Y = \G(X,U_Y)$, $A = C^*(X)$, and $A_{Y,\infty} = C^*_{Y,\infty}(X)$. 
Since $X$ admits a coarse embedding into $\ell^p$, by \cite{WXYZ2024}, we have that the maps (2) and (5) are isomorphisms for both $i=0,1$. By \Cref{thm: main theorem}, $K_*(\G(X,U_Y))\cong K_*(\I(X,U_Y))\cong K_*(C^*(Y))$. Thus, the maps (1) and (4) are identified with the coarse Baum-Connes assembly map for $Y$, which are also isomorphisms since $Y$ also admits a coarse embedding into $\ell^p$. Then the corollary follows from the Five Lemma.
\end{proof}

The following definition is introduced by M.~Braga, I.~Farah, and A.~Vignati in \cite{BFV2024}.

\begin{Def}[\cite{BFV2024}]
\begin{itemize}
\item[(1)] For any $C^*$-algebra $A$, we shall denote $\P_{Fin}(A)$ the set of all finite rank projections in $A$.
\item[(2)] Let $\I$ be any index set. The \emph{uniform direct product} of $C^*_u(X)$, denoted by $\prod^u_{\I}C^*_u(X)$ is the subalgebra of $\prod_{\I}C^*_u(X)$ generated by all families $(T_i)_{i\in\I}$ with $\sup_{i\in\I}\prop(T_i)<\infty$. One can similarly define the uniform direct product of $C^*(X)$.
\item[(3)] The space $X$ is said to have \emph{operator norm localization for equi-approximable finite-rank projections}, (abbrev. $ONL_{\P_{Fin}}$), if for any $\varepsilon>0$, index set $\I$, and element $(T_i)\in\P_{Fin}(\prod^u_{\I}C^*_u(X))$, there exists $S>0$ such that for any $i\in\I$, there exists $\xi\in\ell^2(X)$ satisfying that
$$\diam(\supp(\xi))\leq S\quad \text{and} \quad \|T_i\xi\|\geq (1-\varepsilon)\|T_i\|$$ 
\item[(4)] If we replace $\P_{Fin}(\prod^u_{\I}C^*_u(X))$ by $\prod^u_{\I}C^*_u(X)$, then we shall say $X$ has \emph{operator norm localization}, (abbrev. ONL, see \cite{CTWY2008}).
\end{itemize}
\end{Def}

We should mention that a large class of spaces satisfies the property $ONL_{\P_{Fin}}$, including all metric spaces that admit a coarse embedding into Hilbert space, see \cite[Lemma 35]{FS2014} and \cite[Theorem 1.3]{BFV2024}.  Our result extends the class to spaces that are coarsely embeddable into $\ell^p$-spaces.

\begin{Cor}\label{cor: CElp implies ONLFin}
If $X$ admits a coarse embedding into $\ell^p$, then $X$ has $ONL_{\P_{Fin}}$.
\end{Cor}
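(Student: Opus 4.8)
The plan is to proceed by contraposition: assuming $X$ does not have $ONL_{\P_{Fin}}$, I will manufacture an ``exotic'' ghost projection on a suitable coarse disjoint union of finite subsets of $X$ and contradict \Cref{thm: main theorem}. Unwinding the definition, the failure of $ONL_{\P_{Fin}}$ furnishes an $\varepsilon_0>0$ and a sequence $(P_n)_{n\in\IN}$ of nonzero finite-rank projections in $C^*_u(X)$ with $(P_n)_n\in\prod^u_{\IN}C^*_u(X)$ such that for every $n$ and every unit vector $\xi\in\ell^2(X)$ with $\diam(\supp\xi)\le n$ one has $\|P_n\xi\|<1-\varepsilon_0$. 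Moreover, since $(P_n)_n$ lies in the uniform product, for each $\delta>0$ we may fix a family $(T_n)_n$ with $\sup_n\prop(T_n)\le R_\delta<\infty$ and $\sup_n\|P_n-T_n\|<\delta$.

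The first key step is the observation that $\sup_{x,y\in X}|(P_n)_{xy}|\to 0$ as $n\to\infty$. Were this false, after passing to a subsequence we would find $c>0$ and points $x_k,y_k$ with $|(P_{n_k})_{x_ky_k}|\ge c$; fixing the approximating family $(T_n)_n$ with $\delta<\varepsilon_0 c/10$, the bound $\prop(T_{n_k})\le R_\delta$ forces $T_{n_k}\delta_{y_k}$ to be supported on a set of diameter $\le 2R_\delta$ with $\|T_{n_k}\delta_{y_k}\|\ge c/3$, and a short computation (using $P_{n_k}^2=P_{n_k}$ and $\|P_{n_k}-T_{n_k}\|<\delta$) shows that the normalized vector $\xi_k:=T_{n_k}\delta_{y_k}/\|T_{n_k}\delta_{y_k}\|$ satisfies $\|P_{n_k}\xi_k\|>1-\varepsilon_0$, contradicting the displayed inequality once $n_k\ge 2R_\delta$.

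Next I would pass to a sparse space with finite components. Each $P_n$ is compact, so I can choose finite sets $A_n\subseteq X$ with $\|P_n-\chi_{A_n}P_n\chi_{A_n}\|\to 0$ and set $\pi_n:=\chi_{[1/2,\infty)}(\chi_{A_n}P_n\chi_{A_n})\in M_{|A_n|}(\IC)$; this is a genuine projection supported in $A_n\times A_n$, with $\|\pi_n-P_n\|\to 0$ and hence $\mathrm{rank}(\pi_n)=\mathrm{rank}(P_n)\ge 1$ for large $n$. Let $Z:=\bigsqcup_n A_n$ be the (separated) coarse disjoint union; since each $A_n$ is a finite subset of $X$ and $X$ coarsely embeds into some $\ell^p$, so does $Z$ with the same control functions. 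Using the $(T_n)_n$ on the tail and $\pi_n$ itself on the finitely many small indices one checks that $\pi:=(\pi_n)_n\in C^*_u(Z)$, and the first key step makes $\pi$ a ghost; it is a projection of infinite total rank dominating finite-rank subprojections of every rank. Tensoring with a rank-one $e\in\K(\H)$ yields a ghost projection $\hat\pi:=\pi\otimes e$ in the ghost ideal $\G(Z,Z)\subseteq C^*(Z)$, where (exactly as in the classical compact-versus-ghost picture) I write $Z$ also for the invariant open subset of principal ultrafilters in $\beta Z$, so that $\I(Z,Z)=\K(\H_Z)$. The crucial structural fact is that $C^*(Z)=C^*_u(Z)\otimes\K(\H)$, being a $C^*$-subalgebra of the stably finite algebra $\bigl(\prod_n M_{|A_n|}(\IC)\bigr)\otimes\K(\H)$, is itself stably finite; hence so is its ideal $\G(Z,Z)$.

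Finally I would extract the contradiction from \Cref{thm: main theorem}: applied to $Z$ with the invariant open set $Z\subseteq\beta Z$, it gives an isomorphism $i_*\colon K_0(\K(\H_Z))=K_0(\I(Z,Z))\xrightarrow{\ \cong\ }K_0(\G(Z,Z))$, so $K_0(\G(Z,Z))\cong\IZ$ with generator $g:=i_*[e_Z]$; write $[\hat\pi]=mg$. For each $N$ choose a rank-$N$ subprojection $Q_N\le\hat\pi$ with $Q_N\in\K(\H_Z)$; then $\hat\pi-Q_N$ is an honest projection in $\G(Z,Z)$, so $(m-N)g=[\hat\pi-Q_N]$ lies in the positive cone $K_0(\G(Z,Z))^+$. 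Taking any $N>m$, both $(m-N)g$ and $(N-m)g$ lie in $K_0(\G(Z,Z))^+$; stable finiteness of $\G(Z,Z)$ makes this cone proper (i.e.\ $K_0^+\cap(-K_0^+)=\{0\}$), whence $(m-N)g=0$ and $N=m$, a contradiction. I expect the main obstacle to be the reduction rather than the $K$-theory: one must extract $(P_n)$, establish the decay $\sup_{x,y}|(P_n)_{xy}|\to 0$, and, above all, truncate to a coarse disjoint union of finite sets while simultaneously keeping membership in $C^*_u(Z)$ (uniform propagation along the tail) and preserving the ghost property; once $\hat\pi$ and the stable finiteness of $\G(Z,Z)$ are in hand, the endgame is formal, and it proves that $X$ has $ONL_{\P_{Fin}}$.
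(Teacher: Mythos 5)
Your argument is correct, but it takes a significantly different and more self-contained route than the paper. The paper's proof is essentially two lines: it invokes \cite[Theorem~1.3]{BFV2024} as a black box---which already says that failure of $ONL_{\P_{Fin}}$ produces a non-compact ghost projection in $C^*_u(X')$ for some sparse subspace $X'\subseteq X$---and then feeds this into \Cref{lem: when ghost notequal cpt}, whose proof runs through the trace map $\tau_*\colon K_0(C^*(X'))\to\prod_n\IZ/\bigoplus_n\IZ$ and observes that the class of a non-compact ghost projection has nonzero image while $\tau_*\circ i_*=0$. You instead re-derive the ghost projection by hand: unwinding the definition to get the sequence $(P_n)$, proving the decay $\sup_{x,y}|(P_n)_{xy}|\to0$, truncating to finite blocks $A_n$, and assembling $\pi=(\pi_n)$ on the coarse disjoint union $Z=\bigsqcup_n A_n$ is, in effect, a self-contained proof of the direction of Braga--Farah--Vignati's theorem that the paper cites. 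Your $K$-theoretic endgame is also genuinely different: rather than the trace map you use the fact that $\hat\pi$ dominates compact subprojections of every finite rank, plus stable finiteness of $\G(Z,Z)$ inherited from $(\prod_n M_{|A_n|})\otimes\K$, to force $K_0(\G(Z,Z))^+$ to contain $(m-N)g$ together with its negative, whence $m=N$ for all $N$. Both endgames work; the paper's is tidier because $\prod\IZ/\bigoplus\IZ$ sidesteps the (true, but worth a line) fact that stable finiteness of a non-unital algebra gives a proper positive cone via its unitization, while yours has the virtue of not depending on the precise form of the cited BFV result. Two small points you should spell out: (i) the membership $\pi\in C^*_u(Z)$ is not immediate because $\|\pi_n-\chi_{A_n}T^\delta_n\chi_{A_n}\|$ is not uniform in $n$---as you indicate, one uses the exact $\pi_n$'s on a finite initial segment and the approximants $\chi_{A_n}T^\delta_n\chi_{A_n}$ on the tail, taking the maximum of the resulting finitely many propagations; (ii) $Z$ need not be a literal subspace of $X$ (the $A_n$ may overlap in $X$), but since each $A_n\subseteq X$ the coarse disjoint union still inherits bounded geometry and a coarse embedding into $\ell^p$ with the same control functions, which is all that \Cref{thm: main theorem} requires.
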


To prove \Cref{cor: CElp implies ONLFin}, we need the following lemma (see also the proof of \cite[Proposition 35]{FS2014}).

\begin{Lem}\label{lem: when ghost notequal cpt}
Let $X$ be a sparse metric space with bounded geometry. All ghost projections in $C^*(X)$ are compact if and only if the canonical inclusion $i:\K\to \G$ induces an isomorphism $i_*:K_0(\K)\to K_0(\G)$, where $\G=\G(X,X)$ is the ghostly ideal associated with the invariant set $X\subseteq \beta X$.
\end{Lem}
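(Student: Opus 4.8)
The plan is to exploit the sparse structure directly, keeping the six‑term sequence in the background. Since both sides of the asserted isomorphism are coarse invariants (\Cref{cor: K-coarse invariant}), we may and do assume $X=\bigsqcup_{n\in\IN}X_n$ is a \emph{separated} disjoint union of finite sets (cf.\ \cite[Lemma 12.5.4]{HIT2020}), so that every operator in $C^*(X)$ is block diagonal, $T=(T_n)_{n\in\IN}$ with $T_n\in\K(\H_{X_n})$, $\H_{X_n}=\ell^2(X_n)\otimes\H$. Reading off the definitions with $U=X\subseteq\beta X$: the ghostly ideal $\G=\G(X,X)$ is exactly the set of ghost operators, i.e.\ the block‑diagonal $T=(T_n)$ with $\max_{x,y\in X_n}\|(T_n)_{xy}\|\to 0$, while the geometric ideal $\K=\I(X,X)$ is exactly the block‑diagonal compacts, $\K=\bigoplus^{c_0}_{n\in\IN}\K(\H_{X_n})=C^*(X)\cap\K(\H_X)$. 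Thus for a projection $P\in C^*(X)$, ``$P$ is a compact ghost'' is the same as ``$P\in\K$''. The crucial device is, for each $n$, the evaluation $*$-homomorphism $\phi_n\colon\G\to\K(\H_{X_n})$, $(T_k)_k\mapsto T_n$, whose restriction to $\K$ is the $n$-th coordinate map of the $c_0$-sum. By continuity of $K$-theory, $K_0(\K)=\bigoplus_{n\in\IN}K_0(\K(\H_{X_n}))=\bigoplus_{n\in\IN}\IZ$, and under this identification $(\phi_n)_*\circ i_*$ is the $n$-th coordinate projection $\pi_n$. Since an element of $\bigoplus_n\IZ$ annihilated by every $\pi_n$ is zero, $i_*\colon K_0(\K)\to K_0(\G)$ is \emph{injective for free}; the lemma therefore reduces to the equivalence: all ghost projections are compact $\iff$ $i_*$ is surjective.

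For ``$i_*$ surjective $\Rightarrow$ ghosts compact'': let $P=(P_n)\in C^*(X)$ be a ghost projection, so $P\in\G$. Choose $\beta=(\beta_n)_n\in K_0(\K)=\bigoplus_n\IZ$ with $i_*(\beta)=[P]$; as $\beta$ lies in the direct sum, $\beta_n=0$ for all $n$ outside a finite set. Applying $(\phi_n)_*$ gives $\operatorname{rank}(P_n)=(\phi_n)_*[P]=\bigl((\phi_n)_*\circ i_*\bigr)(\beta)=\beta_n$, which vanishes for all large $n$. Hence $P_n=0$ for all large $n$, so $P\in\bigoplus^{c_0}_n\K(\H_{X_n})=\K$, i.e.\ $P$ is compact. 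It is exactly here that sparseness is used decisively: for a general bounded‑geometry space $K_0(\K)$ would be a product rather than a direct sum, and $\beta$ could have infinitely many nonzero coordinates.

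For the converse, suppose every ghost projection in $C^*(X)$ is compact. By \cite[Corollary 2.7.4]{HIT2020} (used in the same way in the proof of \Cref{pro: dirac dual dirac}), every class in $K_0(\G)$ is represented by a single projection $[P]$ with $P$ a projection in a matrix algebra over the unitization of $\G$; since $\G$ is stable (the defining Hilbert space $\H$ absorbs $\H\otimes\ell^2(\IN)$, and the ideal does not depend on this choice), the standard reduction lets us take $P\in\G$ itself. Then $P$ is a projection in $C^*(X)$ lying in the ghostly ideal $\G$, i.e.\ a ghost projection, so by hypothesis $P\in\K=\I(X,X)$; hence $[P]$ lies in the image of $i_*$. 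Thus $i_*$ is surjective, and together with the automatic injectivity above, $i_*$ is an isomorphism.

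The only non‑formal point, and the step I expect to require the most care, is the converse direction's reduction of an arbitrary $K_0(\G)$-class to the class of a single honest projection in $\G$: this rests on the stability of $\G$ and the usual manipulation of projections over the unitization. (If one prefers the six‑term sequence of $0\to\K\to\G\to\G/\K\to0$, then — using $K_1(\K)=0$, whence $i_*$ onto iff $K_0(\G/\K)=0$ — one would instead invoke that projections lift modulo $\K$, which holds because $\K$ has an increasing approximate unit of projections, and that a lift in $\G$ of a projection of $\G/\K$ is a ghost projection, hence compact, hence maps to $0$.) Either route is routine; the genuinely new ingredient is the block‑wise rank detection by the maps $\phi_n$ together with the identification $K_0(\K)=\bigoplus_n\IZ$, which is where the geometry of the sparse space $X$ enters.
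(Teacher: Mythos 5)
There is a genuine gap, and it sits right at the start: your reduction to a ``separated disjoint union with infinite inter-block distances'' is \emph{not} a coarse equivalence, so \Cref{cor: K-coarse invariant} does not license it. For a sparse metric space in the sense of the paper, the distances $d(X_n,X_m)$ tend to infinity but are finite, and therefore every rank-one operator $\delta_x\otimes v\,\langle\cdot,\delta_y\otimes w\rangle$ has finite propagation even when $x,y$ lie in different blocks. Consequently $\K(\H_X)\subseteq C^*(X)$, operators in $C^*(X)$ are \emph{not} block diagonal, and the geometric ideal is $\K=\I(X,X)=\K(\H_X)$, whose $K_0$ is $\IZ$, not $\bigoplus_n\IZ$. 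Your entire mechanism collapses at this point: the maps $\phi_n(T)=\chi_{X_n}T\chi_{X_n}$ are compressions, not $*$-homomorphisms (they fail to be multiplicative precisely because of the off-diagonal blocks), so $(\phi_n)_*$ is undefined on $K_0(\G)$; and even if it were, $K_0(\K)=\IZ$ has no ``coordinates'' for it to detect. The ``injectivity for free'' claim and the ``$\beta_n$ eventually zero, hence $P_n=0$ eventually'' step in the surjectivity$\Rightarrow$compactness direction both rest on this incorrect identification. (You can see the paper is in the finite-distance framework from its own proof: it forms the quotient $C^*(X)/\K(\H_X)$ and uses $\K(\H_X)\cap\prod^u_n C^*(X_n)=\bigoplus_n M_{N_n}(\K)$, both of which presuppose $\K(\H_X)\subseteq C^*(X)$ while $C^*(X)\not\subseteq\prod_n C^*(X_n)$.)

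By contrast, the paper does not get injectivity for free: it cites a nontrivial result of Oyono-Oyono and Yu (Prop.~2.10 and Rem.~2.12 of \cite{OOY2009}) for the injectivity of $i_*$ on a sparse space. For the direction ``$i_*$ iso $\Rightarrow$ all ghost projections compact'', the paper builds a $*$-homomorphism $\tau\colon C^*(X)\to\prod_n\K/\bigoplus_n\K$ through the quotient $C^*(X)/\K(\H_X)\cong(\prod^u_n C^*(X_n))/(\bigoplus_n M_{N_n}(\K))$, inducing $\tau_*\colon K_0(C^*(X))\to\prod_n\IZ/\bigoplus_n\IZ$; since $\tau$ kills $\K(\H_X)$ we have $\tau_*\circ i_*=0$, while $\tau_*[P]\neq0$ for a non-compact ghost projection $P$, a contradiction. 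Your third paragraph (``ghosts compact $\Rightarrow$ $i_*$ surjective'' via quasi-stability) does match the paper's argument, and your closing six-term-sequence remark is sound. But the two directions that you present as formal consequences of a block-diagonal structure are exactly the ones where the paper needs a real input (the OOY injectivity result and the trace map), so the gap is substantive.
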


\begin{proof}
The $(\Rightarrow)$ part. For any projection $P\in M_n(\G)$, since the Roe algebra is quasi-stable, there exists a projection $P'\in\G$ such that $[P]=[P']\in K_0(\G)$. By assumption, $P'$ is compact, thus $[P']\in K_0(\K)$ and $i_*([P'])=[P]$. Thus $i_*$ is surjective. Since $X$ is sparse, the map $i_*$ is injective by \cite[Proposition 2.10 \& Remark 2.12]{OOY2009}. This shows that $i_*$ is an isomorphism.

The $(\Leftarrow)$ part. Assume for a contradiction that $P\in C^*(X)$ is a non-compact ghost projection. Say $X=\bigsqcup_{n\in\IN}X_n$ and $N_n=\#X_n$.
Notice that $\prod^u_{n\in\IN}C^*(X_n)$ is a subalgebra of $C^*(X)$ and $\K(\H_X)\cap \prod^u_{n\in\IN}C^*(X_n)=\bigoplus_{n\in\IN}M_{N_n}(\K)$. Moreover, the quotient algebras are canonically isomorphic to each other, i.e.,
$$\frac{C^*(X)}{\K(\H_X)}\cong\frac{\prod^u_{n\in\IN}C^*(X_n)}{\bigoplus_{n\in\IN}M_{N_n}(\K)}.$$
Since each $X_n$ is finite, the Roe algebra $C^*(X_n)$ is the algebra of compact operators. Thus, we can define $\tau: C^*(X)\to\prod_{n\in\IN}\K/\bigoplus_{n\in\IN}\K$ via the composition
$$C^*(X)\xrightarrow{\pi}\frac{C^*(X)}{\K(\H_X)}\cong \frac{\prod^u_{n\in\IN}C^*(X_n)}{\bigoplus_{n\in\IN}M_{N_n}(\K)}\hookrightarrow\frac{\prod_{n\in\IN}\K}{\bigoplus_{n\in\IN}\K}.$$
This leads to a ``trace map''
$$\tau_*: K_0(C^*(X))\to K_0\left( \frac{\prod_{n\in\IN}\K}{\bigoplus_{n\in\IN}\K}\right)\cong\frac{\prod_{n\in\IN}\IZ}{\bigoplus_{n\in\IN}\IZ}.$$

For any $T\in C^*(X)$, it is direct to see that $T$ is compact if and only if $\tau(T)=0$. For the non-compact ghost projection $P$, the $\tau(P)$ defines a non-zero projection on $\prod_{n\in\IN}\K/\bigoplus_{n\in\IN}\K$. Write $\tau(P)=[p_1,\cdots,p_n,\cdots]$ with each $p_i$ is a projection in $\K$. Since $\tau(P)$ is non-zero, we have that $\limsup_{n\to\infty}\|p_n\|=1$. Thus, $\tau_*([P])$ is non-zero.

Consider the following commuting diagram
$$\begin{tikzcd}
{K_0(\K)} \arrow[d,"i_*"'] \arrow[r,"\tau_*"] & {\frac{\prod_{n\in\IN}\IZ}{\bigoplus_{n\in\IN}\IZ}} \\
{K_0(\G)} \arrow[ru,"\tau_*"']          &   
\end{tikzcd}$$
where $\G=\G(X,X)$ is the ghostly ideal associated with the invariant set $X\subseteq \beta X$. If $X$ admits a non-compact ghost $P\in C^*(X)$, then $[P]\in K_0(\G)$ can never be in the image of $i_*$ since $\tau_*$ maps $K_0(\K)$ to $0$ while $\tau_*([P])\ne 0$. This leads to a contradiction to the fact that $i_*$ is an isomorphism.
\end{proof}

\begin{proof}[Proof of \Cref{cor: CElp implies ONLFin}]
Assume for a contradiction that $X$ does not have $ONL_{\P_{Fin}}$. By \cite[Theorem 1.3]{BFV2024}, there exists a sparse subspace $X'\subseteq X$ such that there exists a non-compact ghost projection in $C^*_u(X')$. 
Without loss of generality, we may assume that $X$ itself is sparse and $P\in C^*_u(X)$ is a non-compact ghost projection. Take $p\in \K(\H)$ to be a rank one-projection. Then the tensor product $P\ox p$ defines a non-compact ghost projection in $C^*(X)$. By \Cref{lem: when ghost notequal cpt}, the map $i_*$ is not an isomorphism on $K_0$.  This leads to a contradiction to \Cref{thm: main theorem}.
\end{proof}

\subsection{A remark on maximal coarse Baum-Connes conjecture}

In this section, we discuss how the method used in \cite{HIT2020} to prove the coarse Baum-Connes conjecture also applies to the maximal coarse Baum-Connes conjecture. This requires the following proposition.

Let $\IC[X]$ be the algebraic Roe algebra of $X$ on the $X$-module $\H_X=\ell^2(X)\ox \H$, where $\H$ is a separable infinite-dimensional Hilbert space. We shall denote $\B=\B(\H)$ and $\K=\K(\H)$ for simplicity.

\begin{Pro}\label{pro: extension of representation}
Let $\pi: \IC[X]\to \B(\H_\pi)$ be a non-degenerate $*$-representation. Then there exists a representation $\rho:\ell^{\infty}(X,\B)\to\B(\H_\pi)$ on the same Hilbert space $\H_\pi$ such that
$$\rho(\ell^{\infty}(X,\B))\subseteq M(C^*_\pi(X)),$$
where $C^*_{\pi}(X)$ is the $C^*$-completion of $\pi(\IC[X])$ in $\B(\H_\pi)$ and $M(C^*_{\pi}(X))$ is the multiplier algebra of $C^*_{\pi}(X)$.
\end{Pro}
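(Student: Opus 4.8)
The plan is to build the representation $\rho$ of $\ell^\infty(X,\B)$ by exploiting the fact that, up to the usual choices, a non-degenerate $*$-representation $\pi$ of $\IC[X]$ on $\H$ is spatially implemented by an isometry onto a geometric module: concretely, the idea is to show that $\H$ decomposes (non-canonically) as a direct sum of copies of the ``point modules'' over the elements of $X$, so that the projections $\pi(\chi_x)$ for $x\in X$ (or rather their appropriate images, once one notes that $\chi_{\{x\}}$ is a rank-one-over-$\K$ multiplier that is a limit of elements of $\IC[X]$ acting in the multiplier algebra) give an orthogonal family of projections summing strongly to $1$. First I would make precise the sense in which $\ell^\infty(X)\subseteq M(\IC[X])$ already acts: for each $x$, the function $\chi_x\in\ell^\infty(X)$ multiplies $\IC[X]$ into itself, so $\pi$ extends to these multipliers by $\pi(\chi_x)\pi(T):=\pi(\chi_x T)$ and $\pi(T)\pi(\chi_x):=\pi(T\chi_x)$; non-degeneracy guarantees these are well-defined bounded operators and that $\sum_x\pi(\chi_x)=1$ strongly. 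Then for a general $\varphi\in\ell^\infty(X)$ one sets $\rho(\varphi)=\sum_x\varphi(x)\pi(\chi_x)$ in the strong topology, and one checks this is a $*$-homomorphism into $M(C^*_\pi(X))$ because each finite partial sum manifestly multiplies $\pi(\IC[X])$ into $\pi(\IC[X])$ and the tails are controlled.

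Next I would upgrade from $\ell^\infty(X)$ to $\ell^\infty(X,\B)$. Here $\B$ denotes the coefficient algebra (the $\B(\H)$-valued, or $\K$-valued, part of the matrix entries), and the point is that $\B$ already acts on each point module $\pi(\chi_x)\H$ compatibly with $\pi$: an element $b\in\B$ sitting at the diagonal position $x$ is a diagonal finite-propagation operator, hence lies in $\IC[X]$ when $b\in\K$ and in $M(\IC[X])$ in general, so $\pi$ extends to it as above. Thus for $(b_x)_{x\in X}\in\ell^\infty(X,\B)$ one defines $\rho((b_x)_x)=\sum_x \pi(\chi_x)\,\widetilde\pi(b_x)\,\pi(\chi_x)$ in the strong topology, where $\widetilde\pi(b_x)$ is the (strict) extension of $\pi$ to the diagonal $b_x$-block; this is the desired $\rho$. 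The verification that $\rho$ is a $*$-homomorphism is a bookkeeping computation with the orthogonality relations $\pi(\chi_x)\pi(\chi_y)=\delta_{xy}\pi(\chi_x)$, and the verification that $\rho(\ell^\infty(X,\B))\subseteq M(C^*_\pi(X))$ reduces, by density and the module-over-$\IC[X]$ structure, to checking that $\rho(\varphi)\pi(T)$ and $\pi(T)\rho(\varphi)$ lie in $C^*_\pi(X)$ for $T\in\IC[X]$ and $\varphi$ finitely supported, which is immediate, and then passing to strong limits using that $C^*_\pi(X)$ is norm-closed and the net of finite truncations of $\rho(\varphi)$ is uniformly bounded and converges strictly.

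The main obstacle I anticipate is \emph{not} the algebra but the analytic point: one must know that $\pi(\chi_x)$ genuinely make sense and sum strongly to $1$, i.e.\ that an arbitrary non-degenerate $*$-representation of the purely algebraic $\IC[X]$ is ``supported on $X$'' in the right way. For this I would invoke (or reprove) the standard structure theory for representations of $\IC[X]$: since $\IC[X]\supseteq\bigoplus_{x\in X}\K(\h_x)$ as a two-sided ideal-like subalgebra (the finitely-supported diagonal compacts), non-degeneracy of $\pi$ forces $\pi$ restricted to this subalgebra to be non-degenerate, and representations of $c_0$-direct sums of copies of $\K$ are classified, yielding exactly the orthogonal decomposition $\H=\bigoplus_x\pi(\chi_x)\H$ with each summand a multiple of the point module. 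Once this is in hand the extension $\rho$ and the multiplier containment follow by the routine strong-limit arguments sketched above; the only care needed is to keep all sums in the strong (not norm) topology and to check strict continuity when landing inside the multiplier algebra. This proposition will then feed into the maximal setting exactly as the analogous statement does in \cite{HIT2020, WXYZ2024}: it lets one define $\ell^\infty(X,\B)$-coefficient maximal Roe algebras compatibly with any chosen representation, which is what is needed to run the Dirac--dual-Dirac argument at the maximal level.
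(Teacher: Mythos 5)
Your plan founders at exactly the point you yourself flag as the ``main obstacle'': the claim that $\sum_{x}\pi(\chi_x)=1$ strongly, equivalently that $\pi$ restricts to a non-degenerate representation of the finitely supported diagonal compacts $\bigoplus_{x\in X}\K$. This is false for a general non-degenerate representation of $\IC[X]$, and the failure is not a technicality. Consider the ``Calkin-type'' representation obtained by composing $\IC[X]\to C^*(X)\to C^*(X)/\K(\H_X)$ with a faithful non-degenerate representation of the quotient. Since $\ell^\infty(X,\K)$ contains a net $(e_\lambda)$ of constant diagonal operators that is an approximate unit for $\IC[X]$, and the image of $(e_\lambda)$ is still an approximate unit of $C^*(X)/\K(\H_X)$, this representation of $\IC[X]$ is non-degenerate. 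Yet for every $x$ and every $T\in\IC[X]$ the operator $\chi_x T$ is finitely supported, hence maps to $0$ in the quotient, so $\pi(\chi_x)=0$ for all $x$. Thus $\sum_x\pi(\chi_x)=0\neq 1$, there is no decomposition of $\H$ into ``point modules'', and your formula $\rho((b_x)_x)=\sum_x\pi(\chi_x)\widetilde\pi(b_x)\pi(\chi_x)$ collapses to $0$. Since the proposition is needed precisely for the maximal (universal) representation, which contains such summands, the argument cannot be patched by restricting attention to spatial $\pi$.

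The paper's route sidesteps this entirely by using the \emph{whole} diagonal $C^*$-subalgebra $\ell^\infty(X,\K)\subseteq\IC[X]$ rather than its finitely supported part. Because $\ell^\infty(X,\K)$ contains an approximate unit of $\IC[X]$ (the constant diagonals $1\ox p_\lambda$), the restriction $\pi|_{\ell^\infty(X,\K)}$ is automatically non-degenerate for \emph{any} non-degenerate $\pi$, so the standard lifting of a non-degenerate representation of a $C^*$-algebra to its multiplier algebra produces $\rho$ on $M(\ell^\infty(X,\K))=\ell^\infty(X,\B)$ with no decomposition into point modules needed. The multiplier containment $\rho(\ell^\infty(X,\B))\subseteq M(C^*_\pi(X))$ then follows from the Gong--Wang--Yu decomposition $T=\sum_{i=1}^N f_iV_i$ with $f_i\in\ell^\infty(X,\K)$, giving $\rho(g)\pi(T)=\sum_i\pi(gf_i)\pi(V_i)\in\pi(\IC[X])$. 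Your strong-topology bookkeeping in the second half would be fine if the $\pi(\chi_x)$ existed and summed to $1$, but that premise is the real content of the statement, and the correct input is the approximate-unit/multiplier-lifting mechanism, not the structure theory of representations of $\bigoplus_x\K$.
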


\begin{proof}
Notice that $\ell^{\infty}(X,\K)\subseteq\IC[X]$ is a subalgebra. Moreover $\ell^{\infty}(X,\B)$ is identified with the multiplier algebra of $\ell^{\infty}(X,\K)$. Any $*$-representation of a $C^*$-algebra $A$ can be lifted to a representation of $M(A)$, see \cite[Proposition 1.7.3]{HIT2020}. We shall denote $\rho: \ell^{\infty}(X,\B)\to\B(\H_\pi)$ the lifted representation of $\pi|_{\ell^{\infty}(X,\K)}$.

By \cite[Lemma 3.4]{GWY2008}, any element $T\in\IC[X]$ can be written in the form
$$T=\sum_{i=1}^Nf_iV_i$$
with $f_i\in \ell^{\infty}(X,\K)$ and $V_i$ satisfies that $V^*_iV_i\in\ell^{\infty}(X,\K)$. Thus, for any $g\in \ell^{\infty}(X,\B)$, we have that
$$\rho(g)\pi(T)=\rho(g)\sum\pi(f_i)\pi(V_i)=\sum\pi(gf_i)\pi(V_i)\in\pi(\IC[X]).$$
This means that $\rho(\ell^{\infty}(X,\B))\subseteq M(C^*_\pi(X))$.
\end{proof}

\begin{Lem}\label{lem: split into blocks}
Let $X=\bigsqcup_{n\in\IN}X_n$ be the separated disjoint union of finite spaces $\{X_n\}_{n\in\IN}$. For any representation $\pi:\ell^{\infty}(X)\to\B(H)$, we have that
$\sum_{n\in\IN}\pi(\chi_{X_n})$ SOT-converges to $I$.
\end{Lem}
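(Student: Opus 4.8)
The statement to prove is: for $X=\bigsqcup_{n\in\IN}X_n$ a separated disjoint union of finite spaces and $\pi:\ell^\infty(X)\to\B(H)$ a representation, the net of partial sums $\sum_{n\le N}\pi(\chi_{X_n})$ converges to $I$ in the strong operator topology as $N\to\infty$. The plan is to exploit that $\{\chi_{X_n}\}_{n\in\IN}$ is a family of mutually orthogonal projections in $\ell^\infty(X)$ whose (increasing) partial sums $q_N=\sum_{n\le N}\chi_{X_n}$ form an approximate unit for $c_0$-type behaviour; more precisely, $q_N\nearrow 1$ pointwise on $X$, and one wants the image under $\pi$ to converge strongly to $\pi(1)$. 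The first reduction is to observe that $\pi$ may be assumed unital: if not, replace $H$ by $\overline{\pi(1)H}$ (the essential subspace) since $\pi(\chi_{X_n})=\pi(1)\pi(\chi_{X_n})\pi(1)$, so nothing happens off that subspace and everything happens inside it; thus without loss of generality $\pi(1)=I$.

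Next I would set $P_N=\pi(q_N)=\sum_{n\le N}\pi(\chi_{X_n})$. These are projections (since $q_N$ is a projection in the abelian $C^*$-algebra $\ell^\infty(X)$ and $\pi$ is a $*$-homomorphism), they are increasing ($P_N\le P_{N+1}$ because $q_N\le q_{N+1}$), and they are bounded by $I=\pi(1)$. A bounded increasing net of projections in $\B(H)$ converges strongly to its supremum $P_\infty=\sup_N P_N$, which is again a projection; so the only thing left is to identify $P_\infty=I$. For this, note $I-P_\infty$ is a projection dominated by $I-P_N=\pi(1-q_N)$ for every $N$. Now $1-q_N$ is the characteristic function of $\bigsqcup_{n>N}X_n$, and since the metric space is a \emph{separated} disjoint union, the sets $\bigsqcup_{n>N}X_n$ shrink in a controlled way; but more to the point, as elements of the abelian $C^*$-algebra $\ell^\infty(X)$ one simply has $\prod_N(1-q_N)$ interpreted via Gelfand as supported on $\bigcap_N \overline{\bigsqcup_{n>N}X_n}\subseteq\beta X$. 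The cleanest route: $I-P_\infty\le \pi(1-q_N)$ for all $N$ means $I-P_\infty$ lies below the range projection of $\pi$ applied to the decreasing sequence $1-q_N$; in an abelian $C^*$-algebra $1-q_N\to 0$ only weak-$*$, not in norm, so one cannot immediately conclude by norm continuity. Instead I would argue on vectors: fix $\xi\in H$; the vectors $\pi(\chi_{X_n})\xi$ are mutually orthogonal, so $\sum_n\|\pi(\chi_{X_n})\xi\|^2\le\|\xi\|^2<\infty$, hence $P_N\xi$ is Cauchy and converges to some $\eta\in H$ with $\eta=P_\infty\xi$. To see $\eta=\xi$, test against any $\zeta$ of the form $\pi(\chi_{X_m})\zeta'$: for $N\ge m$ we get $\langle P_N\xi,\pi(\chi_{X_m})\zeta'\rangle=\langle \xi,\pi(\chi_{X_m})\zeta'\rangle$, and such vectors are total in $H$ precisely because $\sum_N \pi(q_N)\to I$ weakly — but that is circular. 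The honest fix is: the closed span of $\bigcup_m \pi(\chi_{X_m})H$ is a closed $\pi(\ell^\infty(X))$-invariant (indeed $\pi(c_0(X))$-invariant) subspace containing $\pi(1)H=H$ once one checks $\pi(\chi_{X_m})$ acts as the identity on its own range and that the only subspace orthogonal to all of them is $\{0\}$; this last point follows since $\chi_{X_m}\to$ something with no common complement, or more simply since $q_N\to 1$ in the strict topology on the multiplier algebra of $c_0(X)=\bigoplus_n C(X_n)$, and strictly convergent nets go to strong limits under any non-degenerate representation.

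The main obstacle is exactly this last identification $P_\infty=I$: pointwise convergence $q_N\to 1$ in $\ell^\infty(X)$ is only weak-$*$, not norm, so one cannot invoke norm continuity of $\pi$, and a general representation of $\ell^\infty(X)$ need not be normal. The correct and clean resolution is to work with the subalgebra $c_0(X)=\bigoplus_{n}C(X_n)\subseteq\ell^\infty(X)$, for which $\{q_N\}$ is a genuine approximate unit (indeed a sequential one, increasing, since each $X_n$ is finite). The restriction $\pi|_{c_0(X)}$ is automatically non-degenerate when $\pi$ is unital, because $c_0(X)$ is an essential ideal in $\ell^\infty(X)=M(c_0(X))$ and $\pi$ extends the non-degenerate representation of the ideal; concretely, $\pi(\chi_{X_n})=\pi(\chi_{X_n})\in\pi(c_0(X))$, and $\overline{\pi(c_0(X))H}=H$ since $\pi(1)=I$ and $q_N\to 1$ strictly in $M(c_0(X))$. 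For a non-degenerate representation of a $C^*$-algebra, any approximate unit maps to a net converging strongly to the identity; applying this to the approximate unit $\{q_N\}$ of $c_0(X)$ gives $\sum_{n\le N}\pi(\chi_{X_n})=\pi(q_N)\xrightarrow{\textup{SOT}} I$, which is the claim. I would write the proof in this order: reduce to $\pi$ unital; observe $\chi_{X_n}\in c_0(X)$ and $q_N$ is an approximate unit for $c_0(X)$; check $\pi|_{c_0(X)}$ is non-degenerate; invoke the standard fact that an approximate unit is sent to a strongly-convergent-to-$I$ net under a non-degenerate representation.
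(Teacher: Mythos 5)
Your overall strategy — pass to the $C^*$-subalgebra $c_0(X)=\bigoplus_n C(X_n)$, observe that $q_N=\sum_{n\le N}\chi_{X_n}$ is an increasing approximate unit there, and invoke the standard fact that a non-degenerate representation sends an approximate unit to a net converging strongly to $I$ — is the natural one, and in fact cleaner than the paper's. But the crucial step is not justified, and you half-noticed this yourself. You assert that $\pi|_{c_0(X)}$ is \emph{automatically} non-degenerate once $\pi(1)=I$, citing that $c_0(X)$ is essential in $\ell^\infty(X)=M(c_0(X))$ and that $q_N\to 1$ strictly. This is circular: strict convergence $q_N\to 1$ in $M(c_0(X))$ only produces SOT convergence of $\pi(q_N)$ under the multiplier extension of a \emph{non-degenerate} representation of $c_0(X)$, which is exactly what you are trying to show. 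Essentiality alone does not help. Concretely, fix a free ultrafilter $\omega$ on $\IN$ and let $\pi_\omega:\ell^\infty(\IN)\to\IC$, $f\mapsto\lim_{n\to\omega}f(n)$, acting on $H=\IC$. This is a unital $*$-homomorphism with $\pi_\omega|_{c_0(\IN)}=0$. Taking $X_n$ to be singletons (placed at pairwise increasing distances, so $X=\bigsqcup X_n$ is a separated disjoint union), one gets $\pi_\omega(\chi_{X_n})=0$ for every $n$ but $\pi_\omega(1)=1$; hence $\sum_n\pi_\omega(\chi_{X_n})=0\ne I$. So the lemma as literally stated is false, and no proof could close the gap you left.

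It is worth noting that the paper's own argument stumbles at exactly the same spot, in measure-theoretic disguise. The set function $\mu(E)=\langle\pi(\chi_E)\xi,\xi\rangle$ on $\mathcal{P}(X)$ is only \emph{finitely} additive; the Riesz representation theorem applied to $f\mapsto\langle\pi(f)\xi,\xi\rangle$ on $\ell^\infty(X)\cong C(\beta X)$ produces a Radon measure on $\beta X$, not on $X$, and all the mass may sit on the corona $\beta X\setminus X$ (for $\pi_\omega$ it is the Dirac mass $\delta_\omega$). The paper's claimed identity $\mu(X)=\sum_n\mu(X_n)$ is precisely the countable additivity that fails for such $\mu$, and countable additivity of all the $\mu_\xi$ is equivalent to non-degeneracy of $\pi$ on $c_0(X)$ — the very thing your argument needs. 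The statement becomes correct once one strengthens the hypothesis, e.g.\ to: $\pi$ is the restriction to $\ell^\infty(X)\ox 1$ of the multiplier extension of a representation that is non-degenerate on $c_0(X,\K)$ (which is the situation in which the lemma is applied, via \Cref{pro: extension of representation}); under that hypothesis your approximate-unit route goes through cleanly and is preferable to the measure-theoretic phrasing. As written, however, both your proof and the paper's beg the question at the same step, and you should flag the missing hypothesis explicitly rather than claim non-degeneracy is automatic.
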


\begin{proof}
For any $\xi\in H$, by Riesz's representation theorem, the function $\mu:\P(X)\to[0,\|\xi\|]$
$$\mu(E)=\langle \chi_E\xi,\xi\rangle$$
defines a measure on $X$. Thus,
$$\mu(X)=\mu(\bigsqcup_{n\in\IN}X_n)=\sum_{n\in\IN}\mu(X_n).$$
As a result, $\|\sum_{n\geq N}\pi(\chi_{X_n})\xi\|^2=\sum_{n\geq N}\mu(X_n)$ tends to $0$ as $N$ tends to infinity.
\end{proof}

By the above proposition, we can apply the methods from \cite[Chapter 12]{HIT2020} and \cite{WXYZ2024} to the maximal coarse Baum-Connes conjecture. First, consider a maximal representation $\pi$ of the Roe algebra $\IC[X,\H_X]$. By \Cref{pro: extension of representation}, since $\IC[X,\H_X]$ and $\IC[X,\H_{X,E}]$ are unitarily equivalent, this induces a maximal representation $\pi_E$ of $\IC[X,\H_{X,E}]$. One can identify both $\ell^{\infty}(X)$ and $C_0(E)$ as subalgebras of $\prod_{n\in\IN}\ell^{\infty}(X_n, \B(L^2(E_n,\mathfrak{h}_{E_n}) \otimes \H))$, where $L^2(E_n,\mathfrak{h}_{E_n})$ is the Hilbert space introduced in the begining of \Cref{sec: step 2 in section 3}. Therefore, even in the maximal Roe algebra, we can still define the propagation with respect to $X$ and $E$. By \Cref{lem: split into blocks}, we can write an element $T\in C^*_{\max}(X,\H_{X,E})$ as a sequence of $T_n$ with each $T_n=\chi_{X_n}T\chi_{X_n}$. Moreover, since both $\chi_{R,p}$ in \Cref{twisted Roe algebra} and $F_{s,n}$ in \Cref{def: multiplier F} are elements in $\ell^{\infty}(X, \B(L^2(E,\mathfrak{h}_E) \otimes \H))$, the condition (2) in \Cref{twisted Roe algebra} remains well-defined for the maximal Roe algebra.

Recalling the proof of \Cref{lem: index class} in \cite[Chapter 12]{HIT2020}, one finds that the proofs of these statements are, in fact, carried out before completion, namely, within the algebras $\IC[X,\H_X]$ and $\IA[X,E]$. For example, $F$ is a multiplier of $\IA[X,E]$, $\IC[X,\H_X]$ is a subalgebra of the multiplier algebra of $\IA[X,E]$, and so on. These statements, therefore, extend to the maximal completions by \Cref{pro: extension of representation}. Consequently, we can still define the index map from the maximal Roe algebra to the maximal version of the twisted algebra. Moreover, one can verify that the proofs in \cite[Section 12.3 \& 12.4]{HIT2020} also hold within the dense subalgebra $\IC[X,\H_{X,E}]$, and thus they naturally extend to the maximal completions. Hence, combining these observations, we obtain:

\begin{Thm}\label{thm: MCBC}
Let $X$ be a metric space with bounded geometry. If $X$ admits a coarse embedding into an $\ell^p$-space, then the maximal coarse Baum-Connes conjecture holds for $X$.\qed
\end{Thm}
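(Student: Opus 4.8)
The plan is to rerun the Dirac--dual-Dirac argument of \cite[Chapter 12]{HIT2020} and \cite{WXYZ2024} --- which establishes the \emph{reduced} coarse Baum--Connes conjecture for $\ell^p$-coarsely embeddable spaces --- entirely in the maximal category, the only genuinely new ingredient being \Cref{pro: extension of representation} together with \Cref{lem: split into blocks}. The maximal coarse Baum--Connes conjecture asserts that the evaluation-at-$0$ map
\[
\mu_{\max}\colon\ \lim_{d\to\infty} K_*\big(C^*_{L,\max}(P_d(X))\big)\ \longrightarrow\ K_*\big(C^*_{\max}(X)\big)
\]
is an isomorphism. As in \cite[Section 12.5]{HIT2020} one first reduces to the case of a sparse space; the reduction uses only the coarse Mayer--Vietoris six-term sequence in $K$-theory, which is available for the maximal Roe algebra and the maximal localization algebra by the same supports-based reasoning as in the reduced case (the relevant ideal-sum and $\omega$-excision intersection relations holding already at the level of finitely-propagated operators, hence in every $C^*$-completion). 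Hence we may assume $X=\bigsqcup_{n}X_n$ is a separated disjoint union of finite spaces coarsely embedding into $\ell^p$ with common control functions, with $f_n(X_n)$ spanning a finite-dimensional $(E_n,\|\cdot\|_p)$.

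Next I would build the \emph{maximal} twisted Roe algebra $A_{\max}(X,E)$ and its localized companion $A_{L,\max}(X,E)$ following \Cref{twisted Roe algebra}, but starting from $C^*_{\max}(X_n,\H_{X_n,E_n})$ in place of $C^*(X_n,\H_{X_n,E_n})$. Conditions (1)--(3) of \Cref{twisted Roe algebra} still make sense because $\chi^{(n)}_{R,p}$, the fields $F_{s,n}$ of \Cref{def: multiplier F}, and the subalgebras $\ell^\infty(X)$ and $C_0(E)$ all lie inside $\ell^\infty(X,\B(L^2(E,\mathfrak{h}_E)\ox\H))$, whose image in the multiplier algebra of the maximal Roe algebra is furnished by \Cref{pro: extension of representation}; and \Cref{lem: split into blocks} lets one present any $T\in C^*_{\max}(X,\H_{X,E})$ as a uniformly controlled family $(T_n)_n$ with $T_n=\chi_{X_n}T\chi_{X_n}$, matching the direct-product picture. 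The decisive observation --- already signalled in the discussion preceding the statement --- is that every assertion of \Cref{lem: index class} ($F$ is a self-adjoint contractive multiplier of $\IA[X,E]$; $\IC[X,\H_X]$ embeds into the multipliers of $\IA[X,E]$; the finite-propagation and local-compactness estimates for the index class) is proved inside the \emph{algebraic} $*$-algebras $\IC[X,\H_X]$ and $\IA[X,E]$, and therefore transfers verbatim to the maximal completions via \Cref{pro: extension of representation}. This produces maximal index maps $\ind\colon K_*(C^*_{\max}(X))\to K_*(A_{\max}(X,E))$ and $\ind_L\colon K_*(C^*_{L,\max}(X))\to K_*(A_{L,\max}(X,E))$, compatible with evaluation at $0$ and with the evaluations $\iota^s$ into $C^*_{\max}(X,\H_{X,E})$ and $C^*_{L,\max}(X,\H_{X,E})$, and fitting into the same commuting diagram as in \cite[Chapter 12]{HIT2020} and \cite{WXYZ2024}.

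It then remains to upgrade the $K$-theoretic inputs of \cite{WXYZ2024} to the maximal setting: (a) the Dirac--dual-Dirac isomorphism \Cref{pro: full DDD} and its pointwise-in-$t$ (localized) counterpart, asserting that $\iota^s_*\circ\ind$ (respectively $\iota^s_*\circ\ind_L$) is an isomorphism for every $s$; and (b) the principle \cite[Lemma 6.4.11]{HIT2020} that the $K$-theory of the twisted (localization) algebra depends only on its behaviour as $s\to\infty$. For (a), the proof given for \Cref{pro: dirac dual dirac} applies: it runs entirely through homotopies and identities among finite-propagation, locally compact operators in the algebraic twisted algebra --- the $\kappa$-rescaling homotopy, the ``stacking argument'' of \cite[Proposition 12.6.3]{HIT2020}, and the computation $\ind^{(\infty)}([P])=[P\ox p_k]$ with $p_k$ the rank-one projection onto the kernel of the Bott--Dirac operator --- so it passes to maximal completions unchanged, and likewise at the localized level. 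Point (b) is a formal statement about $C_b([1,\infty))$-type algebras and holds regardless of the completion. With (a) and (b) in place, the diagram chase of \cite[Chapter 12]{HIT2020} and \cite{WXYZ2024} --- relating $C^*_{\max}(X)$, $C^*_{L,\max}(X)$, $A_{\max}(X,E)$, $A_{L,\max}(X,E)$ and the evaluations among them, and using that the maximal Roe algebras on the unitarily equivalent modules $\H_X$ and $\H_{X,E}$ have the same $K$-theory --- is purely formal and forces the evaluation-at-$0$ map $K_*(C^*_{L,\max}(P_d(X)))\to K_*(C^*_{\max}(P_d(X)))$ to be an isomorphism for every $d$, hence $\mu_{\max}$ is an isomorphism.

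The step I expect to be the genuine obstacle is the bookkeeping around exactness: one must check that the short exact sequences invoked in the reduction to sparse spaces (and, if one wishes to present the twisted calculation as in \cite{HIT2020}, in the cutting-and-pasting over the coordinate subspaces $E_n$) remain exact after passing to maximal completions, which is not automatic since the maximal $C^*$-norm is not exact in general. The resolution, as in \cite{HIT2020,WXYZ2024,CWY2013} and again relying on \Cref{pro: extension of representation} and \Cref{lem: split into blocks}, is that the ideals and quotients involved are cut out by support and propagation conditions on the dense $*$-subalgebras, so the relevant ideal-sum and intersection relations hold before completion and, being relations among closed ideals of a $C^*$-algebra, persist in every completion. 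Everything else in the argument is a routine --- if lengthy --- transcription of \cite[Chapter 12]{HIT2020} and \cite{WXYZ2024}.
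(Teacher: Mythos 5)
Your proposal follows essentially the same route as the paper: both rely on \Cref{pro: extension of representation} and \Cref{lem: split into blocks} to transport the twisted Dirac--dual-Dirac machinery of \cite[Chapter 12]{HIT2020} and \cite{WXYZ2024} to the maximal completions, exploiting that the constructions, multiplier identifications, and homotopies all live in the dense algebraic $*$-algebras $\IC[X,\H_X]$, $\IA[X,E]$, and $\IC[X,\H_{X,E}]$. Your explicit flagging of the exactness issue in the maximal category is a useful clarification that the paper treats only implicitly.
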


As an application, we have the following result.

\begin{Cor}\label{cor: maximal corollary}
Let $X$ be a metric space with bounded geometry that admits a coarse embedding into an $\ell^p$-space ($p\geq 1$). \begin{itemize}
\item[(1)] The canonical quotient map $\pi: C^*_{\max}(X)\to C^*(X)$ induces an isomorphism in $K$-theory, i.e.,
$$\pi_*: K_*(C^*_{\max}(X))\xrightarrow{\cong} K_*(C^*(X)).$$
\item[(2)] For any subspace $Y$ in $X$, the canonical quotient map $\pi: C^*_{\max,Y,\infty}(X)\to C^*_{Y,\infty}(X)$ induces an isomorphism in $K$-theory, i.e.,
$$\pi_*: K_*(C^*_{\max,Y,\infty}(X))\xrightarrow{\cong} K_*(C^*_{Y,\infty}(X)).$$
\item[(3)] For any subspace $Y$ in $X$, the maximal relative coarse Baum-Connes conjecture holds for $(X, Y)$.
\end{itemize}\end{Cor}

\begin{proof}
(1) Consider the following commuting diagram
\[\begin{tikzcd}
{\lim\limits_{d\to\infty}K_*(C^*_{L,\max}(P_d(X)))} \arrow[r,"\mu_{\max}"] \arrow[d,"(\pi_L)*"'] & {K_*(C^*_{\max}(X))} \arrow[d,"\pi_*"] \\
{\lim\limits_{d\to\infty}K_*(C^*_{L}(P_d(X)))} \arrow[r,"\mu"]           & {K_*(C^*(X))}          
\end{tikzcd}\]
The left vertical map is induced by the canonical quotient map on localization algebras
$$\pi_L: C^*_{L,\max}(P_d(X))\to C^*_{L}(P_d(X))$$
by taking the quotient pointwise. Since both maximal and reduced coarse Baum-Connes conjecture holds for $X$ (by \Cref{thm: MCBC} and \cite[Theorem 1.1]{WXYZ2024}), the maps $\mu_{\max}$ and $\mu$ are isomorphisms. Thus, it suffices to show that $\pi_L$ induces an isomorphism in $K$-theory, this follows from a cutting and pasting argument in \cite[Theorem 3.2]{Yu1997}.

(2) Since the left-hand side of the coarse Baum-Connes conjecture coincides for the maximal and reduced norms, \Cref{thm: MCBC} implies that the map $K_*(C^*_{\max}(X))\to K_*(C^*(X))$ is an isomorphism. We then have the following commutative diagram, 
\[\begin{tikzcd}
{0} \arrow[r] & {C^*_{\max}(X, Y)} \arrow[r] \arrow[d] & {C^*_{\max}(X)} \arrow[r] \arrow[d] & {C^*_{\max,Y,\infty}(X)} \arrow[d] \arrow[r] & {0} \\
{0} \arrow[r] & {\G(X,U_Y)} \arrow[r]           & {C^*(X)} \arrow[r]           & {C^*_{Y,\infty}(X)} \arrow[r]           & {0},
\end{tikzcd}\]
where $C^*_{\max}(X,Y)$ denotes the geometric ideal in the maximal Roe algebra. Its $K$-theory is isomorphic to $K_*(C^*_{\max}(Y))$; see \cite{GWZ2025} for the associated short exact sequence. For the proof of this isomorphism, one may refer to \cite[Lemma 1]{HRY1993} for the reduced case. The argument for the maximal case follows verbatim.

The left vertical map is given by the composition
$$C^*_{\max}(X,Y)\xrightarrow[\text{canonical quotient}]{\pi}\I(X,U_Y)\xrightarrow[\text{canonical inclusion}]{i}\G(X,U_Y).$$
By item (1), $K_*(C^*_{\max}(Y))$ is isomorphic to $K_*(\I(X,U_Y))$ since $Y$ coarsely embeds into $\ell^p$-space, which is further isomorphic to $K_*(\G(X,U_Y))$ by \Cref{thm: main theorem}. Then it follows immediately from the Five Lemma.

(3) It follows directly from (2) and \Cref{cor: relative CBC}.
\end{proof}

\subsection*{Acknowledgement} We thank Jiawen Zhang for insightful comments on the first draft. 
Liang Guo is partially supported by the Chinese Postdoctoral Science Foundation (No. 2025M773059) and the Research Start-up Fund of the Shanghai Institute for Mathematics and Interdisciplinary Sciences (SIMIS). 
Qin Wang is partially supported by NSFC (No. 12571135), Key Laboratory of MEA (Ministry of Education), the Science and Technology Commission of Shanghai (No. 22DZ2229014). 

\subsection*{Conflict of interest} The authors declare that they have no conflict of interest.

\subsection*{Data availability} Data sharing is not applicable to this article as no datasets were generated or analysed during the current study.

\bibliographystyle{alpha}
\bibliography{ref}

\end{document}